\numberwithin{equation}{section}
\theoremstyle{plain}
\newtheorem{theorem}{Theorem}[section]
\newtheorem{lemma}[theorem]{Lemma}
\newtheorem{proposition}[theorem]{Proposition}
\newtheorem{corollary}[theorem]{Corollary}
\newtheorem{remark}[theorem]{Remark}
\newtheorem{definition}[theorem]{Definition}
\DeclareMathSymbol{\Rho}{\mathalpha}{operators}{"50}
\DeclareMathSymbol{\Mu}{\mathalpha}{operators}{"4D}
\DeclareMathSymbol{\Alpha}{\mathalpha}{operators}{"41}
\DeclareMathSymbol{\Beta}{\mathalpha}{operators}{"42}
\DeclareMathSymbol{\Epsilon}{\mathalpha}{operators}{"45}
\DeclareMathSymbol{\Zeta}{\mathalpha}{operators}{"5A}
\DeclareMathSymbol{\Eta}{\mathalpha}{operators}{"48}
\DeclareMathSymbol{\Iota}{\mathalpha}{operators}{"49}
\DeclareMathSymbol{\Kappa}{\mathalpha}{operators}{"4B}
\DeclareMathSymbol{\Chi}{\mathalpha}{operators}{"58}
\newcommand{\N}{\mathbb{N}}
\newcommand{\R}{\mathbb{R}}
\newcommand{\SF}{\mathbb{S}}
\begin{document}

\title{Minimizing under relaxed symmetry constraints:\\
Triple and $N$-junctions.}
\author{{ Giorgio Fusco\footnote{Dipartimento di Matematica Pura ed Applicata, Universit\`a degli Studi dell'Aquila, Via Vetoio, 67010 Coppito, L'Aquila, Italy; e-mail:{\texttt{fusco@univaq.it}}}}
}
\maketitle

\begin{abstract}
We consider a nonnegative potential $W:\R^2\rightarrow\R$ invariant under the action of the rotation group $C_N$ of the regular polygon with $N$ sides, $N\geq 3$. We assume that $W$ has $N$ nondegenerate zeros and prove the existence of a $N$-junction solution to the vector Allen-Cahn equation. The proof is variational and is based on sharp lower and upper bounds for the energy and on a new pointwise estimate for vector minimizers.
\end{abstract}
\section{Introduction}
This note concerns entire solutions $u:\R^n\rightarrow\R^m$ of the elliptic system
\begin{equation}
\Delta u=W_u(u),
\label{elliptic}
\end{equation}
where $W:\R^m\rightarrow\R$ is a smooth nonnegative function that satisfies
\begin{equation}
0=W(a)<W(u),\;\;a\in A,\;u\in\R^m\setminus A,
\label{W-p}
\end{equation}
and $A=\{a_1,\ldots,a_N\}\subset\R^m$ is a set of $N$ distinct points.

In phase transition theory, a function $W$ that satisfies \eqref{W-p} can be regarded as a model for the bulk free energy of a system that can exist in $N$ equally preferred phases represented by the zeros $a_1,\ldots,a_N$ of $W$.

We focus on \emph{minimizers} that is solutions $u:\R^n\rightarrow\R^m$ of \eqref{elliptic} that satisfy
\begin{equation}
J_\Omega(u+v)\geq J_\Omega(u),\quad
J_\Omega(u)=\int_\Omega\Big(\frac{\vert\nabla u\vert^2}{2}+W(u)\Big)dx.
\label{energy}
\end{equation}
for every bounded domain $\Omega\subset\R^n$ and any $C^1(\bar{\Omega};\R^m)$ map $v$ that coincides with $u$ on $\partial\Omega$.

In the scalar case $m=1$ there is a relationship \cite{Wei} between minimal surfaces and minimizers of \eqref{elliptic} and many deep interesting results \cite{sav}, \cite{dkw} have been obtained in the process of understanding this relationship.

In the vector case $m\geq 2$ the situation is quite different. The lack of basic tools like the maximum principle makes the description of the set of all bounded solutions of \eqref{elliptic} an almost impossible task. To our knowledge the asymmetric layered solutions constructed in \cite{scha} is probably the only known minimizer that does not assume any symmetry. On the other hand, beginning with the triple junction of \cite{bgs} and the quadruple junction of \cite{gs} various symmetric solutions with complex geometric structure were discovered (see \cite{bfs1} and Ch.6 and Ch.7 in \cite{afs}).

The simplest case is where $W$ is invariant under a finite reflection group $G$
\begin{equation}
W(gu)=W(u),\;\;u\in\R^m,\,g\in G.
\label{W-inv0}
\end{equation}
and $G$ acts both on the domain space $\R^n$ and on the target space $\R^m$.
The minimization is on the set of $G$-equivariant maps
\begin{equation}
u(gx)=gu(x),\;\;x\in\R^n,\,g\in G
\label{equiv}
\end{equation}
that map fundamental region $F$ for the action of $G$ on $\R^n$ into fundamental region $\Phi$ for the action of $G$ on $\R^m$
\begin{equation}
u(\bar{F})\subset\bar{\Phi}.
\label{constraint}
\end{equation}
For the triple junction case $G=Z_3$, the group of the symmetries of the equilateral triangle, for the qudruple junction $G$ is the group of the symmetries of a regular tetrahedron.

Restricting to $G$-equivariant maps that satisfy \eqref{constraint} means minimizing under a constraint but the key point here is that it can be shown that the very fact that $G$ is a finite reflection group implies that the constraint \eqref{constraint} is inactive and does not affect the Euler-Lagrange equation yielding a solution of \eqref{elliptic} that satisfies \eqref{constraint}. This is a basic fact since a main difficulty one has to face when dealing with the non convex minimization of $J_\Omega$ is the lack of a method for determining the regions of the domain where the minimal solution $u$ of \eqref{elliptic} is near to one or another of the zeros of $W$. Knowing that $u$ satisfies \eqref{constraint} allows to overcome this difficulty. Indeed, if $W$ has a unique zero in each $\bar{\Phi}$, \eqref{constraint} implies that, in each fundamental region $F$, the minimizer $u$ remains away from all the zeros of $W$ but one of them. This is a very important fact that, in the analysis of the structure of $u\vert_F$, allows to regard $W$ has having a unique zero and, in this sense, to reduce a non convex problem to a convex one and \eqref{constraint} is the starting point for deriving sharp pointwise estimates that yield precise information on the geometric structure of $u$.

The scope of this notes is to make a first step toward removing the assumption of symmetry. We relax the symmetry requirements and consider the problem of the existence of multi-junction solution in $\R^2$ which are equivariant with respect to the rotation group $C_N\subset Z_N$ of the regular polygon with $N$ sides. $C_N$ has order $N$ and $N=\sharp A$ coincides with the number of the minima of $W$ which is assumed to be invariant with respect to $C_N$.

Working in the context of maps equivariant with respect to $C_N$ still helps a lot with the proof of existence but differently from the full reflection group $Z_N$ of all the symmetries of the regular $N$-gon does not allow to impose \eqref{constraint} and consequently to reduce to a convex problem. Therefore a completely new technique must be devised. Here the fact that we work in $\R^2$ plays an important role.

Our precise assumption are the following
\begin{description}
\item[H$_1$] $W:\R^2\rightarrow\R$ is invariant under $C_N$
\begin{equation}
W(\omega u)=W(u),\;\;u\in\R^2,\;\omega=\left(\begin{array}{l}
\cos{\frac{2\pi}{N}}\;\;-\sin{\frac{2\pi}{N}}\\
\sin{\frac{2\pi}{N}}\;\;\;\cos{\frac{2\pi}{N}}
\end{array}\right),
\label{W-inv}
\end{equation}
where $\omega$ is the generator of $C_N$.
\item[H$_2$]
 $W\geq 0$ and $A=\{a,\omega a,\ldots,a^{N-1}a\}$ for some $a\in\R^2\setminus\{0\}$ and the Hessian matrix $W_{uu}(a)$ is positive definite.  Moreover
 \[W_u(u).u\geq 0,\;\;\text{for}\;\vert u\vert\geq M,\;\text{some}\;M>0.\]
 \end{description}

 These assumptions imply the existence of a minimizer $\bar{u}:\R\rightarrow\R^2$ of the problem
 \begin{equation}
\begin{split}
&J_\R(\bar{u})=\min_{v\in\mathscr{A}_a}J_\R(v),\;\;J_\R(v)=\int_\R\Big(\frac{1}{2}\vert\frac{d v}{ds}\vert^2+W(v)\Big)ds,\\
&\mathscr{A}_a=\{v\in H_{\mathrm{loc}}^1(\R;\R^2): \lim_{s\rightarrow\pm\infty}v(s)=a_\pm,\;a_-=a,\;a_+\in A\setminus\{a\}\}.
\end{split}
\label{sig-def}
\end{equation}
The minimizer $\bar{u}$ satisfies
 \begin{equation}
\ddot{u}=W_u(u).
\label{AC}
\end{equation}
We can assume that $a_+=\omega a$:
 \[\begin{split}
 &\lim_{s\rightarrow-\infty}\bar{u}(s)=a,\\
 &\lim_{s\rightarrow+\infty}\bar{u}(s)=\omega a.
 \end{split}\]
Indeed it can be shown that $a_+\not\in\{\omega a,\omega^{-1}a\}$ contradicts the minimality of $\bar{u}$. On the other hand if $a_+=\omega^{-1}a$ then $\omega\bar{u}(-\cdot)$ connects $a$ to $\omega a$.


We prove
\begin{theorem}
\label{Th2}
Assume that $\mathbf{H}_1$ and $\mathbf{H}_2$ hold.
Then there exists a $C_N$-equivariant solution $U:\R^2\rightarrow\R^m$ of \eqref{elliptic}
\begin{equation}
U(\omega x)=\omega U(x),\;\;x\in\R^2.
\label{u-equiv}
\end{equation}
Moreover there are positive constant $\mathring{C}$, $k$, $K$ and $\mathring{r}\geq\frac{(\mathring{C}N)^2}{\pi^2}$ such that
\begin{equation}
\vert U(x)-a\vert\leq Ke^{-k d(x,\partial Q)},\;\;x\in Q,
\label{exp-infty}
\end{equation}
where
\[Q=\{x=x(r,\theta): r>\mathring{r},\;
\frac{\mathring{C}}{r^\frac{1}{2}}<\theta<\frac{2\pi}{N}-\frac{\mathring{C}}{r^\frac{1}{2}}\},\]
$(r,\theta$ are the polar coordinates of $x)$.
\end{theorem}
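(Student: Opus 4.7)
The plan is to obtain $U$ as a subsequential limit of minimizers $u_R$ of $J_{B_R}$ over the class of $C_N$-equivariant $H^1$-maps on $B_R\subset\R^2$, with a $C_N$-equivariant boundary condition that sets $v=a$ on the arc $\{|x|=R,\;0\leq\theta\leq 2\pi/N\}$ and extends by equivariance. Existence of each $u_R$ is routine via the direct method, and the coercivity provided by $W_u(u)\cdot u\geq 0$ for $|u|\geq M$ in $\mathbf{H}_2$ yields a truncation-based $L^\infty$ bound uniform in $R$. The remainder of the proof consists in deriving $R$-uniform pointwise information on $u_R$ that survives passage to the limit through the interior $C^{2,\alpha}$ estimates for \eqref{elliptic}.

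Writing $\sigma:=J_\R(\bar u)$ and letting $F=\{x(r,\theta):0\leq\theta\leq 2\pi/N\}$ be the fundamental sector, I would first establish the sharp upper bound
\[
J_{F\cap B_R}(u_R)\leq\sigma R+C_1
\]
by exhibiting an explicit competitor: a rescaled profile of $\bar u$ in a fixed Cartesian neighborhood of the ray $\theta=0$, equal to $a$ elsewhere in $F$, with a short interpolating layer. The matching lower bound
\[
J_{F\cap B_R}(u_R)\geq\sigma R-C_2
\]
follows from a coarea/slicing argument: in polar coordinates AM--GM yields $\frac{|u_\theta|^2}{2r}+rW(u)\geq|u_\theta|\sqrt{2W(u)}$ on each arc $\{|x|=r\}\cap F$; the $C_N$-equivariance gives $u_R(r,2\pi/N)=\omega u_R(r,0)$, so angular integration produces at least the $W$-geodesic distance $\sigma$ between $a$ and $\omega a$; integrating in $r$ from some $r_0$ to $R$ yields the claim.

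The key new ingredient, and the principal obstacle, is the pointwise estimate. In the reflection-group setting of \cite{bgs,gs} equivariance confines $u|_F$ to a fundamental region containing only the zero $a$, reducing the problem to a convex one; with only rotational symmetry this is false, and one must argue directly that, for $r$ large and $\theta$ well inside $(0,2\pi/N)$, $u_R(x(r,\theta))$ is nonetheless close to $a$. My approach is a contradiction argument driven by the sharp bounds above. If $|u_R(x_0)-a|\geq\delta>0$ at an interior point $x_0$ with $\mathring C/\sqrt r<\theta_0<2\pi/N-\mathring C/\sqrt r$, then on an $O(1)$-neighborhood of the arc $|x|=|x_0|$ the map $u_R$ must execute additional transitions between zeros to reconcile the interior excursion with the $C_N$-boundary identification, contributing an extra $O(1)$ of energy per unit of $r$. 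The angular scale $1/\sqrt r$ is calibrated precisely so that the unavoidable transition strip near each boundary ray contributes only $O(1)$ to the total energy, leaving no slack to absorb such an interior excursion and so contradicting the upper bound. Here the two-dimensional setting is essential: the sectorial structure of $F$ and the one-dimensional character of its bounding arcs make the slicing comparison tractable.

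Once the pointwise estimate gives $|u_R-a|\leq\delta$ on $Q\cap B_R$ for $\delta$ small, the exponential decay \eqref{exp-infty} follows from a standard comparison argument. Positive definiteness of $W_{uu}(a)$ in $\mathbf{H}_2$ makes $w:=|u_R-a|^2$ a subsolution of $\Delta w\geq 2\lambda w - o(w)$ with $\lambda$ the smallest eigenvalue of $W_{uu}(a)$, and comparison with an explicit exponential barrier on $Q\cap B_R$ yields \eqref{exp-infty} with constants independent of $R$. The hypothesis $\mathring r\geq(\mathring CN)^2/\pi^2$ is exactly what ensures that the angular window $(\mathring C/\sqrt r,\,2\pi/N-\mathring C/\sqrt r)$ defining $Q$ is nonempty. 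Passing to a subsequence $R_k\to\infty$ and using interior $C^{2,\alpha}$ compactness for \eqref{elliptic} then yields the claimed $U$ satisfying \eqref{u-equiv} and \eqref{exp-infty}.
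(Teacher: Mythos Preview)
Your overall architecture---minimize on $B_R$, prove matched bounds $N\sigma R\pm C$, get a pointwise estimate, upgrade to exponential decay, pass to the limit---is exactly the paper's. But the crucial third step, the pointwise estimate, is where your proposal has a genuine gap.

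Your contradiction sketch says: an interior excursion $|u_R(x_0)-a|\geq\delta$ at $x_0=x(r_0,\theta_0)$ with $\theta_0$ well inside $(0,2\pi/N)$ forces ``additional transitions between zeros\ldots contributing an extra $O(1)$ of energy per unit of $r$''. This does not follow. Nothing prevents the \emph{single} heteroclinic transition on the circle $|x|=r_0$ from sitting near $\theta_0$ rather than near $\theta=0$; the $C_N$-equivariance only forces one transition per fundamental arc, not its angular location. So a pointwise excursion at $x_0$ need not cost any extra energy at all---it may simply mean the interface passes through $x_0$. Even your Dirichlet condition anchoring the interface near $\theta=0$ at $r=R$ does not control its position for $r\ll R$: the interface can drift as $r$ decreases, and the energy bounds alone leave $O(1)$ slack, enough to accommodate a bounded amount of drift or a localized excursion without contradiction.

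The paper confronts precisely this difficulty, and its machinery is not optional. First (Proposition~\ref{away}, Lemma~\ref{Sigma}) it shows that the set $\Sigma$ of radii where the circular profile $u^R(r,\cdot)$ fails to have the expected layered structure has measure bounded independently of $R$; on $(r_\delta,R)\setminus\Sigma$ there is a well-defined layer angle $\theta_r$. Second (Lemma~\ref{rrprime}) it uses the bound \eqref{du-dr-bound} on $\int|\partial_r u^R|^2$ to prove a pseudo-Lipschitz estimate $|\theta_r-\theta_{r_*}|\lesssim\ln(r/r_*)$, controlling the drift of the interface. Third, and this is the heart of the argument, Theorem~\ref{uR<} establishes the pointwise estimate \emph{without} assuming anything about $u^R$ on the exceptional annuli $\tilde\Sigma$: the proof builds a competitor that is cheaper than $u^R$ unless $|u^R(x_0)-a|\leq 2\delta$, and the delicate point (Lemma~\ref{Jtildeu-JuR}) is showing that even if $u^R$ visits a wrong well somewhere in $\tilde\Sigma\cap B_l(x_0)$, the radial energy this forces (via Lemma~\ref{lower-sigma}) outweighs the cost of the deformation. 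Finally, Section~\ref{diff-int}--\ref{lenghtUB} constructs a diffuse interface $\mathscr{I}$ and proves $|\gamma^m|\leq R_n+C$ for its spine, which is what confines the interface to the $O(r^{-1/2})$ angular window and gives the set $Q$.

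Your sketch skips all of this: it neither identifies $\Sigma$, nor controls $\theta_r$, nor explains how to get pointwise information across the bad set, nor bounds the length of the interface. The $1/\sqrt r$ scale is the \emph{output} of the length bound $|\gamma^m|\leq R+C$, not an input you can calibrate a priori.
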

The solution $U$ in Theorem \ref{Th2} is a $C_N$-equivariant minimizer in the sense that satisfies \eqref{energy} for each $C_N$-equivariant $v$. This is a consequence of the fact that $U$ is obtained as limit of a sequence of $G$-equivariant minimizers $u^R:B_R\rightarrow\R$ of $J_{B_R}$.

Theorem \ref{Th2} can be generalized to the case where different rotation groups act on the domain space $\R^n$ and on the target space $\R^m$. As before we let $\omega$ be the generater of the group $C_N$, $N\geq 2$ acting on $\R^m$ and let $\omega^\frac{1}{h}$, $h=1,\ldots$ be the generator of the group $C_{hN}$ acting on $\R^n$. Then the same proof of Theorem \ref{Th2}, with minor modification, yields
\begin{theorem}
\label{gen-}
Assume that $\mathbf{H}_1$ and $\mathbf{H}_2$ hold. Then there exists a classical solution $U:\R^n\rightarrow\R^m$ of \eqref{elliptic} with the following properties:
\begin{enumerate}
\item $U$ satisfies the equivariance relation
\[U(\omega^{\frac{1}{h}}x)=\omega U(x),\;\;x\in\R^n.\]
\item There are positive constant $\mathring{C}$, $k$, $K$ and $\mathring{r}\geq\frac{(\mathring{C}hN)^2}{\pi^2}$ such that
\begin{equation}
\vert U(x)-a\vert\leq Ke^{-k d(x,\partial Q)},\;\;x\in Q,
\label{exp-infty1}
\end{equation}
where
\[Q=\{x=x(r,\theta): r>\mathring{r},\;
\frac{\mathring{C}}{r^\frac{1}{2}}<\theta<\frac{2\pi}{hN}-\frac{\mathring{C}}{r^\frac{1}{2}}\},\]

\end{enumerate}
\end{theorem}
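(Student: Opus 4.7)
The strategy parallels Theorem~\ref{Th2}: I would construct $U$ as the $C^2_{\mathrm{loc}}$-limit of a sequence of equivariant minimizers $u^R$ on balls $B_R \subset \R^n$, then verify the decay estimate. Fix $R > 0$ and let $\omega^{1/h}$ denote a rotation of angle $2\pi/(hN)$ acting on a fixed coordinate 2-plane $\Pi \subset \R^n$ and as the identity on $\Pi^\perp$. I would minimize $J_{B_R}$ over the class
\[\mathcal{A}_R=\{u\in H^1(B_R;\R^m):\;u(\omega^{1/h}x)=\omega u(x)\text{ a.e.\ in }B_R\}.\]
Lower semicontinuity, together with the radial condition $W_u(u)\cdot u \geq 0$ for $|u|\geq M$ (which yields an $L^\infty$ bound by truncating against the sphere $|u|=M$), provides a minimizer $u^R$. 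Since $\Delta$ commutes with rotations, $W$ is $C_N$-invariant, and the rotation pair $(\omega^{1/h},\omega)$ is compatible with the equivariance constraint, the Euler--Lagrange equation \eqref{elliptic} is satisfied in the interior of $B_R$ and $u^R$ is a classical solution.

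A fundamental region $F$ for the $C_{hN}$-action is a wedge of opening $2\pi/(hN)$ in $\Pi$, extended trivially across $\Pi^\perp$, and the equivariance forces the trace of $u^R$ on the two bounding half-hyperplanes to be $\omega$-related. I would then reprove the sharp energy upper and lower bounds for $J_{B_R \cap F}(u^R)$ exactly as in Theorem~\ref{Th2}. The upper bound is supplied by a comparison map that interpolates in the angular coordinate of $\Pi$ between the boundary values $a$ and $\omega a$ using the heteroclinic $\bar u$ of \eqref{sig-def}. The lower bound is obtained by slicing along circular arcs in $\Pi$ and applying minimality of $\bar u$. Formally the arguments are unchanged, with $2\pi/N$ replaced systematically by $2\pi/(hN)$. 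The decisive step is the pointwise estimate that gives the exponential bound \eqref{exp-infty1}: it proceeds from the matching upper/lower energy bounds and a contradiction argument that forces $u^R$ into a small neighborhood of $a$ wherever the energy density would otherwise be too large. The mechanism is intrinsically two-dimensional, taking place on angular slices of $F$ within $\Pi$; the $\Pi^\perp$ directions enter only as spectator variables in a product.

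With the decay estimate established for $u^R$ with constants $\mathring C, k, K$ independent of $R$, a standard diagonal argument extracts $u^{R_j}\to U$ in $C^2_{\mathrm{loc}}(\R^n)$; the limit inherits the equivariance, solves \eqref{elliptic} classically, and satisfies \eqref{exp-infty1}. The principal obstacle lies in reproving this pointwise estimate rather than in the limit passage: one must verify that the planar energy-contradiction argument of Theorem~\ref{Th2} is not disrupted by the spectator directions $\Pi^\perp$, and that the constant $\mathring C$ can be chosen large enough to absorb the shrinkage of the sector opening from $2\pi/N$ to $2\pi/(hN)$. Once the planar core of the argument is isolated from the ambient dimension, both adjustments reduce to bookkeeping, which is why \emph{minor modification} of the proof of Theorem~\ref{Th2} suffices.
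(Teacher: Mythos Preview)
Your outline correctly identifies the strategy: the paper does not give a separate proof of this theorem but states that the proof of Theorem~\ref{Th2} carries over with minor modifications, namely the systematic replacement of the sector angle $2\pi/N$ by $2\pi/(hN)$ and of the equivariance $u(\omega x)=\omega u(x)$ by $u(\omega^{1/h}x)=\omega u(x)$. On each circle $C_r$ the restriction of $u^R$ then makes $hN$ transitions instead of $N$, so the energy bounds become $hN\sigma R-C_0\le J_{B_R}(u^R)\le hN\sigma R+C_1$, and the whole chain (Proposition~\ref{away}, Lemma~\ref{Sigma}, Theorem~\ref{uR<}, the diffuse-interface construction, the length bound on $\gamma^m$) goes through with $N$ replaced by $hN$ in the appropriate places. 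Your summary of this is accurate.

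The one point where you go beyond the paper is the interpretation of the domain $\R^n$ for $n>2$, with the rotation acting on a plane $\Pi$ and trivially on $\Pi^\perp$. The paper is written entirely for the planar domain: the set $Q$ in the statement is defined using only the polar coordinates $(r,\theta)$, the diffuse interface $\mathscr I$ is a planar object, and the pointwise estimate (Theorem~\ref{uR<}) exploits the two-dimensional annular structure of $\tilde\Sigma$. Despite the symbol $\R^n$ in the statement (carried over from the general setup of the introduction), the intended domain is $\R^2$. Your claim that the $\Pi^\perp$-directions are mere spectators would require real justification if $n>2$: an equivariant minimizer on $B_R\subset\R^n$ is not forced to be independent of $\Pi^\perp$, so the reduction to a planar problem is not automatic and would not fall under ``minor modification''. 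With $n=2$ this issue is vacuous and your argument coincides with the paper's.
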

This theorem extends the result in \cite{acm}, \cite{bfs} to the case of equivariance with respect to the rotation groups $C_N$ and $C_{hN}$ for general $N$ and $h$.

For the classical bistable potential $W:\R\rightarrow\R$, $W(u)=\frac{1}{4}(1-u^2)^2$  we have $N=2$ and it is well known that assumptions $\mathbf{H}_1-\mathbf{H}_2$ are satisfied. Then the following theorem can be regarded as a particular case of  Theorem \ref{gen-}.
\begin{theorem}
\label{Th1}
For each $h=1,\ldots$ the scalar Allen-Cahn equation
 \begin{equation}
\Delta u=u^3-u,\;\;x\in\R^2,
\label{AC2}
\end{equation}
has a classical solution $U:\R^2\rightarrow\R$ which satisfies
 the equivariance relation
\begin{equation}
u(\omega^{\frac{1}{h}}x)=-u(x),\;x\in\R^2,\;\omega^{\frac{1}{h}}=\left(\begin{array}{l}\cos\frac{\pi}{h}\; -\sin\frac{\pi}{h}\\\sin\frac{\pi}{h}\;\;\; \cos\frac{\pi}{h}\end{array}\right).
\label{equi}
\end{equation}

 Moreover there are positive constant $\mathring{C}$, $k$, $K$ and $\mathring{r}\geq\frac{(2\mathring{C}h)^2}{\pi^2}$ such that
\[
\vert U(x)+1\vert\leq Ke^{-k d(x,\partial Q)},\;\;x\in Q,
\]
where
\[Q=\{x=x(r,\theta): r>\mathring{r},\;
\frac{\mathring{C}}{\sqrt{r}}<\theta<\frac{\pi}{h}-\frac{\mathring{C}}{\sqrt{r}}\}.\]
\end{theorem}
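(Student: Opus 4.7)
The plan is to derive Theorem \ref{Th1} as a direct application of the generalization Theorem \ref{gen-} after verifying that the classical bistable potential $W(u)=\frac{1}{4}(1-u^2)^2$ satisfies $\mathbf{H}_1$ and $\mathbf{H}_2$ in the scalar, two-zero setting $m=1$, $N=2$. With these parameters the generator $\omega$ of $C_N=C_2$ acting on $\R^m=\R$ is multiplication by $-1$, while $\omega^{1/h}$ acting on $\R^n=\R^2$ is the rotation by $\pi/h$ appearing in \eqref{equi}. The final conclusion then follows from \eqref{exp-infty1} upon selecting $a=-1$.

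First I would check $\mathbf{H}_1$: the identity $W(-u)=\frac{1}{4}(1-u^2)^2=W(u)$ gives invariance under $C_2$. Next I would verify $\mathbf{H}_2$: $W\ge 0$ with equality exactly on $A=\{1,-1\}=\{a,\omega a\}$; the (one-dimensional) Hessian $W''(u)=3u^2-1$ takes value $2>0$ at both zeros, so $W_{uu}$ is positive definite there; and $W'(u)u=u^4-u^2\ge 0$ for $|u|\ge 1$, so the coercivity condition $W_u(u)\cdot u\ge 0$ for $|u|\ge M$ is met with any $M\ge 1$.

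Next I would invoke Theorem \ref{gen-} with $n=2$, $m=1$, $N=2$ and the given integer $h$. The resulting classical solution $U:\R^2\to\R$ satisfies $U(\omega^{1/h}x)=\omega U(x)=-U(x)$, which is exactly \eqref{equi}. The sector $Q$ in \eqref{exp-infty1} becomes $\{r>\mathring{r},\ \mathring{C}/\sqrt{r}<\theta<2\pi/(hN)-\mathring{C}/\sqrt{r}\}=\{r>\mathring{r},\ \mathring{C}/\sqrt{r}<\theta<\pi/h-\mathring{C}/\sqrt{r}\}$, and the lower bound $\mathring{r}\ge(\mathring{C}hN)^2/\pi^2$ reads $\mathring{r}\ge(2\mathring{C}h)^2/\pi^2$, both matching Theorem \ref{Th1}. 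Since $A=\{1,-1\}$ is symmetric under $u\mapsto -u$, nothing is lost by relabelling the base point $a=-1$ in $\mathbf{H}_2$; the estimate $|U(x)-a|\le K e^{-kd(x,\partial Q)}$ then becomes $|U(x)+1|\le Ke^{-kd(x,\partial Q)}$, as required.

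The one delicate item, and the step I would spend the most care on, is the fact that $\mathbf{H}_1$--$\mathbf{H}_2$ as stated in the introduction are formulated for $W:\R^2\to\R$, whereas here $W$ lives on $\R$. The phrase ``\textit{with minor modification}'' following the statement of Theorem \ref{gen-} signals that the proof scheme (construction of $u^R$ on balls $B_R$ as $C_{hN}$-equivariant minimizers, matching against the one-dimensional heteroclinic connector $\bar u$ of \eqref{sig-def}, energy lower and upper bounds on sectors, and the pointwise estimate in the interior of $Q$) carries over verbatim to $m=1$, since no step intrinsically requires target dimension at least two. Verifying that no argument silently invokes $m\ge 2$ is the only genuine task, but no substantive new idea should be needed, and the theorem is obtained.
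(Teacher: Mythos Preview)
Your proposal is correct and matches the paper's own treatment: the paper does not give an independent proof of Theorem \ref{Th1} but simply observes (in the paragraph preceding the statement) that the bistable potential satisfies $\mathbf{H}_1$--$\mathbf{H}_2$ with $N=2$ and that the result is a particular case of Theorem \ref{gen-}. Your explicit verification of the hypotheses and your remark about the target dimension $m=1$ versus the $m=2$ formulation of $\mathbf{H}_1$--$\mathbf{H}_2$ are in fact more detailed than what the paper provides.
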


From \eqref{equi} it follows that, if $h=2$, the solution $U$ in Theorem \ref{Th1} is saddle shaped. Existence of saddle solution of \eqref{AC2} in $\R^2$  equivariant with respect to $Z_4$, the reflection group of the symmetries of the square was first established in \cite{DFP} and generalized to the case of equivariance with respect to $Z_{2N}$ in \cite{acm}, see also \cite{SO}. Existence and stability of saddle shaped solutions of \eqref{AC2} in $\R^{2n}$ was discussed in \cite{CT}. Theorem \ref{Th1} shows that minimizing in the larger class of maps obtained by relaxing the symmetry constraint to the mere equivariance with respect to the rotation subgroup of $Z_{2N}$ still yields a saddle shaped solution.

We give some sketchy ideas on the complex minimization process that determines a minimizer $u^R$ of $J_{B_R}$ and the estimates necessary to define $U$ as limit of $u^R$ for $R\rightarrow+\infty$.

We show that, for each $r>0$ sufficiently large, there exists a minimizer $u_r$ of
\[\min J_r(v),\;\;J_r(v)=\int_0^{2\pi r}\Big(\frac{\vert\dot{v}\vert^2}{2}+W(v)\Big)ds,\]
in the class of $2\pi r$-periodic $C_N$-equivariant $v$. This and an accurate estimate on $ J_r(u_r)$
allow the derivation of sharp lower and upper bounds for the energy $J_{B_R}(u^R)$ of $u^R$ (see Section \ref{LandU}). A by product of these bounds is the estimate
\begin{equation}
\int_0^R\int_0^{2\pi}\vert\frac{\partial}{\partial r}u^R\vert^2 r d\theta dr\leq C,
\label{Kin-intr}
\end{equation}
with $C>0$ independent of $R$.

Next, in Section \ref{away-ur}, we derive a lower bound for the energy $J_r(v)$ of a $2\pi r$-periodic map $v$ which does not satisfy certain structure requirements necessary to be a minimizer of $J_r(v)$. From this lower bound and the lower and upper bounds for $J_{B_R}(u^R)$ proved in Section \ref{LandU} we obtain that the measure $\vert\Sigma\vert$ of the set of the $r\in(0,R)$ such that the restriction of $u^R$ to the fiber $C_r$ (the circumference of radius $r$) does not have the structure of $u_r$ is small in the sense that

\begin{equation}
\vert\Sigma\vert\leq C,
\label{smallS}
\end{equation}
for some $C>0$ independent of $R$.

For $r\in(0,R)\setminus\Sigma$ the restriction of $u^R$ to the fiber $C_r$ is near $u_r$ and therefore has a layered structure with $N$ layers that by equivariance are equally spaced. This implies that the layer positions are approximately determined by the value of a single angle $\theta_r$. From \eqref{Kin-intr} and \eqref{smallS} it follows that the map $(0,R)\setminus\Sigma\ni r\rightarrow \theta_r$ has some kind of regularity, a sort of Lipschitz property with jumps (cfr. Section \ref{Lip}). This and the results from Section \ref{away-ur} imply the existence of certain open sets $\mathscr{S}$ such that, see Corollary \ref{esseScr},
\begin{equation}
\begin{split}
& x\in\mathscr{S}\setminus\tilde{\Sigma},\;\;\tilde{\Sigma}=\{x:\vert x\vert\in\Sigma\}\\
&\Rightarrow\\
&\vert u^R(x)-a\vert\leq c\delta^\alpha,
\end{split}
\label{small+-}
\end{equation}
where $c\delta^\alpha$ is a small quantity the particular expression of which comes from the characterization of $u_r$ in Section \ref{away-ur} (statements similar to \eqref{small+-} apply to the images of $\mathscr{S}$ through $\omega$).

The key point of the whole proof is to show that \eqref{small+-} implies
\begin{equation}
\vert u^R(x)-a\vert\leq Ke^{-kd(x,\partial\mathscr{S})},\;\;x\in\mathscr{S}.
\label{vero}
\end{equation}
This is a consequence of a pointwise estimate given by Theorem \ref{uR<} that we prove in Section \ref{pointwise}. This is a delicate point. Indeed we have no control on the behaviour of $u^R(x)$ for $x\in\mathscr{S}\cap\tilde{\Sigma}$ and  a priori we can not even exclude that, for some $x\in\tilde{\Sigma}$,  $u^R(x)$ coincides with one of the minima of $W$ different from $a$. Theorem \ref{uR<} states that, provided $\vert x_0\vert$ and $l>0$ are sufficiently large, if $u^R(x)$ is near $a$ or at least remains at a fixed distance from $A\setminus\{a\}$ for $x\in B_l(x_0)\setminus\tilde{\Sigma}$, then the minimality of $u^R$ and the bound \eqref{smallS} imply that, regardless if $x_0\in\tilde{\Sigma}$ or not, $u^R(x_0)$ must necessarily be near $a$.

Pointwise estimates like \eqref{vero} and the pseudo regularity of the map $(0,R)\setminus\Sigma\ni r\rightarrow\theta_r$  allow for the elaborate construction of a set $\mathscr{I}$, see Section \ref{diff-int}, that plays the role of a diffuse interface in the sense that
\begin{equation}
\begin{split}
&\vert u^R(x)-a\vert\leq\frac{C}{\vert x\vert^p},\;\;x\in\partial\mathscr{I}^+,\\
&\vert u^R(x)-\omega^{-1} a\vert\leq\frac{C}{\vert x\vert^p},\;\;x\in\partial\mathscr{I}^-,
\end{split}
\label{small-R-oR}
\end{equation}
where $\partial\mathscr{I}^-\cup\partial\mathscr{I}^+\simeq\partial\mathscr{I}$ and $p>1$.

In Section \ref{lenghtUB} we associate to $\mathscr{I}$ a curve $\gamma^m$ of minimal length and using \eqref{small-R-oR} show that the energy of $u^R$ is mostly contained in $\mathscr{I}$ and its images under $\omega$ and proportional to the length $\vert\gamma^m\vert$ of $\gamma^m$. This and the upper bound for $J_{B_R}(u^R)$ implies an upper bound for $\vert\gamma^m\vert$. We find $\vert\gamma^m\vert\leq R+C$ with $C>0$ independent of $R$. This estimate gives strong control on the shape of $\mathscr{I}$ that as a result is contained in some kind of neighborhood of one of rays of $B_R$. This and another application of Theorem \ref{uR<} lead to the exponential estimate in Theorem \ref{Th2}.

The paper is organized as follows. In Section \ref{LandU} we derive sharp lower and upper bounds for $J_{B_R}(u^R)$. In Section \ref{away-ur} we give quantitative estimate for the one dimensional energy $J_r$ of maps near $2\pi r$-periodic $C_N$-equivariant minimizers $u_r$ used in the derivation of the lower bound in Section \ref{LandU}.
In Section \ref{basic} we prove two basic lemmas. In Section \ref{pointwise} we prove Theorem \ref{uR<} .
 In Section \ref{structure} we derive detailed information on the structure of the minimizer $u^R$ and conclude the proof of Theorem \ref{Th2}.

\section{Basic lemmas}
The assumption on $W$ imply
\begin{lemma}
\label{W-lemma}
There are constants $\delta_W>0$, and $c_W, C_W>0$ such that
\begin{equation}
\begin{split}
&\vert z-a\vert=\delta,\;\;\delta\leq\delta_W,\\
&\Rightarrow\;\;\frac{1}{2}c_W^2\delta^2\leq W(z)\leq\frac{1}{2}C_W^2\delta^2.
\end{split}
\label{W}
\end{equation}
Moreover, given $M>0$, by reducing the value of $\delta_W$ if necessary, we can also assume
\begin{equation}
\begin{split}
&\delta\in(0,\delta_W]\;\text{ and }\;\vert z\vert\leq M,\;\min_{j=1}^N\vert z-\omega^{j-1}a\vert\geq\delta,\\
&\Rightarrow\;\;\frac{1}{2}c_W^2\delta^2\leq W(z).
\end{split}
\label{outW}
\end{equation}
\end{lemma}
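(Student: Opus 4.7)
The plan is to prove the two estimates separately, starting from the nondegeneracy of the Hessian at $a$ and the $C_N$-invariance of $W$.

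For the first estimate \eqref{W} I would Taylor-expand $W$ around $a$. Since $W(a)=0$ and $W_u(a)=0$ (as $a$ minimizes $W$), one has
\[
W(z)=\tfrac12\langle W_{uu}(a)(z-a),z-a\rangle+o(|z-a|^2).
\]
Let $0<\lambda_{\min}\le\lambda_{\max}$ be the eigenvalues of the positive definite matrix $W_{uu}(a)$. Choose any $0<c_W^2<\lambda_{\min}$ and $C_W^2>\lambda_{\max}$. By continuity of the remainder there exists $\delta_W^{(1)}>0$ such that for $|z-a|\le\delta_W^{(1)}$ the full two-sided bound in \eqref{W} holds. By $C_N$-invariance (hypothesis $\mathbf{H}_1$), the same $c_W,C_W$ work near every zero $\omega^{j-1}a$, so
\[
\tfrac12 c_W^2|z-\omega^{j-1}a|^2\le W(z)\le \tfrac12 C_W^2|z-\omega^{j-1}a|^2,\quad |z-\omega^{j-1}a|\le\delta_W^{(1)}.
\]

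For the lower bound \eqref{outW} I would split the admissible set according to whether $z$ is close to some minimum or not. Case (a): some $j$ satisfies $|z-\omega^{j-1}a|\le\delta_W^{(1)}$; since also $|z-\omega^{j-1}a|\ge\delta$, the local lower bound from the first part gives $W(z)\ge\tfrac12 c_W^2\delta^2$ directly. Case (b): $|z-\omega^{j-1}a|>\delta_W^{(1)}$ for all $j=1,\dots,N$, while $|z|\le M$. Then $z$ ranges over the compact set
\[
K_M:=\{z\in\R^2 : |z|\le M,\ |z-\omega^{j-1}a|\ge\delta_W^{(1)}\text{ for all }j\},
\]
on which $W$ is continuous and, by $\mathbf{H}_2$, strictly positive; set $c_0:=\min_{K_M}W>0$. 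Now shrink $\delta_W$ to $\delta_W:=\min\bigl\{\delta_W^{(1)},\sqrt{2c_0}/c_W\bigr\}$ so that $\tfrac12 c_W^2\delta_W^2\le c_0$. Then for all $\delta\le\delta_W$ one has $\tfrac12 c_W^2\delta^2\le c_0\le W(z)$, completing Case (b).

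The only subtlety is the order of the choices: one must fix $c_W,C_W$ and the \emph{initial} $\delta_W^{(1)}$ from the Taylor expansion around $a$ \emph{before} invoking the compactness bound $c_0$, because $c_0$ itself depends on $\delta_W^{(1)}$; only then is $\delta_W$ reduced to ensure $\tfrac12 c_W^2\delta_W^2\le c_0$. I do not expect any real obstacle: the argument is a routine combination of second-order Taylor expansion at a nondegenerate minimum with compactness away from the zero set, together with the $C_N$-symmetry carrying the local bound to every $\omega^{j-1}a$.
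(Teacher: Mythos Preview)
Your argument is correct and is exactly the standard reasoning one would use here; in fact the paper does not give a proof of this lemma at all, simply stating that ``the assumptions on $W$ imply'' it. Your Taylor-expansion-plus-compactness argument is the natural justification the paper is tacitly relying on.
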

We continue with a lower bound for a one dimensional problem. Set $a_-=a$, $a_+=\omega a$ and $\Gamma_0(a_\pm)=\{a_\pm\}$ and $\Gamma_\delta(a_\pm)=\partial B_\delta(a_\pm)$ for $\delta>0$. If $(s_-,s_+)\subset\R$ is a bounded or unbounded interval and $v:(s_-,s_+)\rightarrow\R^m$ is a map in $H_{\mathrm{loc}}^1$ we set
\[J(v,(s_-,s_+))=\int_{s_-}^{s_+}(\frac{\vert\dot{v}\vert^2}{2}+W(v))ds.\]

\begin{lemma}
\label{lower-sigma}
Let $\delta_\pm\in[0,\delta_W]$ and
let $v:(s_-,s_+)\rightarrow\R^m$ a smooth map such that
\begin{equation}
\lim_{s\rightarrow s_\pm}d(v(s),\Gamma_{\delta_\pm}(a_\pm))=0.
\label{Lim-}
\end{equation}

Then
\begin{equation}
J(v,(s_-,s_+))\geq\sigma-\frac{1}{2}C_W(\delta_-^2+\delta_+^2),
\label{sigmadelta}
\end{equation}
where $\sigma=\int_\R\vert\dot{\bar{u}}\vert^2$ is the energy of the heteroclinic $\bar{u}$ that connects $a_-$ to $a_+$.
\end{lemma}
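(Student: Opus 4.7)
The plan is to modify $v$ into an admissible competitor for \eqref{sig-def} and invoke the minimality of the heteroclinic $\bar u$. The obstruction is that $v$ does not actually reach $a_-$ or $a_+$, only spheres of radii $\delta_\pm$ around them; one therefore has to attach short ``tails'' connecting $a_\pm$ to $v$, and these tails must be engineered so that their total energy is at most $\tfrac12 C_W(\delta_-^2+\delta_+^2)$.

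Concretely, given $\varepsilon>0$ I would pick $s_\pm'$ close to $s_\pm$ such that $\rho_\pm:=|v(s_\pm')-a_\pm|$ lies within $\varepsilon$ of $\delta_\pm$; in particular $\rho_\pm\leq\delta_W$ for $\varepsilon$ small, so the quadratic upper bound \eqref{W} is available on the tails I am about to build. Then extend $v|_{[s_-',s_+']}$ to $\tilde v:\R\to\R^2$ by pure exponential relaxation towards $a_\pm$:
\[
\tilde v(s)=\begin{cases} a_- + e^{C_W(s-s_-')}\bigl(v(s_-')-a_-\bigr), & s\leq s_-', \\ v(s), & s\in[s_-',s_+'], \\ a_+ + e^{-C_W(s-s_+')}\bigl(v(s_+')-a_+\bigr), & s\geq s_+'. \end{cases}
\]
Then $\tilde v\in H^1_{\mathrm{loc}}(\R;\R^2)$ with $\tilde v(s)\to a_\pm$ as $s\to\pm\infty$, so $\tilde v\in\mathscr{A}_a$. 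On the left tail $|\tilde v(s)-a_-|=e^{C_W(s-s_-')}\rho_-\leq\delta_W$, hence \eqref{W} gives $W(\tilde v(s))\leq\tfrac12 C_W^2 e^{2C_W(s-s_-')}\rho_-^2$, while $|\dot{\tilde v}(s)|^2=C_W^2 e^{2C_W(s-s_-')}\rho_-^2$, and direct integration yields
\[
\int_{-\infty}^{s_-'}\!\Bigl(\tfrac12|\dot{\tilde v}|^2+W(\tilde v)\Bigr)ds\;\leq\;\int_{-\infty}^{s_-'}C_W^2 e^{2C_W(s-s_-')}\rho_-^2\,ds\;=\;\tfrac12 C_W\rho_-^2,
\]
and the same bound holds on the right tail with $\rho_+$.

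Combining the three pieces, the minimality property $\sigma=J_\R(\bar u)\leq J_\R(\tilde v)$ gives
\[
\sigma\;\leq\; J(v,(s_-',s_+'))+\tfrac12 C_W(\rho_-^2+\rho_+^2)\;\leq\; J(v,(s_-,s_+))+\tfrac12 C_W(\rho_-^2+\rho_+^2),
\]
after discarding the nonnegative contributions of $v$ on $(s_-,s_-')$ and $(s_+',s_+)$. Letting $\varepsilon\to 0$ so that $\rho_\pm\to\delta_\pm$ produces \eqref{sigmadelta}. The only genuinely delicate point is the choice of the closing curves: a straight-line interpolation would give a larger constant, and the sharp factor $\tfrac12 C_W$ is produced precisely by the exponential ansatz, which saturates the equipartition $\tfrac12|\dot{\tilde v}|^2=W(\tilde v)$ for the quadratic majorant of $W$ around $a_\pm$ supplied by Lemma \ref{W-lemma}.
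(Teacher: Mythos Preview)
Your proof is correct and follows essentially the same route as the paper: both arguments attach exponential tails $a_\pm+e^{\mp C_W(s-s_\pm')}(v(s_\pm')-a_\pm)$ to $v$, compute each tail's energy as $\tfrac12 C_W\rho_\pm^2$ via the quadratic upper bound \eqref{W}, and invoke the minimality of $\bar u$. Your use of interior points $s_\pm'$ together with a limiting argument is a slightly cleaner packaging, since it sidesteps the paper's separate treatment of the cases $s_\pm=\pm\infty$ or nonexistent endpoint limits.
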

\begin{proof}
1. For $\delta_-=\delta_+=0$ \eqref{sigmadelta} is just the statement of the minimality of $\bar{u}$. Therefore we can assume that either $\delta_-$ or $\delta_+$ or both are positive. From \eqref{Lim-} and $\delta_+>0$, if $s_+=+\infty$, it follows $\int_{s_-}^{s_+}W(v)ds=+\infty$ and \eqref{sigmadelta} holds trivially. The same is true if $\delta_+>0$, $s_+<+\infty$ and $\lim_{s\rightarrow s_+}v(s)$ does not exist. Indeed in this case we have $\int_{s_-}^{s_+}\vert\dot{v}\vert^2 ds=+\infty$. It follows that, if $\delta_+>0$, we can assume $s_+<+\infty$ and moreover that
\begin{equation}
\lim_{s\rightarrow s_+}v(s)=v_+,
\label{Lim}
\end{equation}
for some $v_+\in\Gamma_{\delta_+}(a_+)$. Analogous conclusion applies to the case $\delta_->0$.

2. If both $\delta_-$ and $\delta_+$ are positive and $w_\pm$ is a test map that connects $v_\pm$ to $a_\pm$, the minimality of $\bar{u}$ implies
\[J(v,(s_-,s_+))\geq\sigma-J(w_-)-J(w_+),\]
where $J(w_\pm)$ is the energy of $w_\pm)$. This yields \eqref{sigmadelta} provided we show that $w_\pm$ can be chosen so that
\[J(w_\pm)\leq\frac{1}{2} C_W\delta_\pm^2.\]

3. We choose
\[w_+=(1-\frac{\gamma(s)}{\delta_+})a_+ +\frac{\gamma(s)}{\delta_+}v_+,\;\;\gamma(s)=\delta_+e^{-C_W(s-s_+)}.\]
it follows, using also \eqref{W}

\[\begin{split}
&\frac{1}{2}\int_{s_+}^{+\infty}\vert\dot{w}_+\vert^2 ds=\frac{C_W^2}{2}\vert v_+-a_+\vert^2
\int_{s_+}^{+\infty}e^{-2C_W(s-s_+)}ds=\frac{1}{4}C_W\delta_+^2,\\
&\int_{s_+}^{+\infty}W(w_+)ds\leq\frac{C_W^2}{2}\vert v_+-a_+\vert^2\int_{s_+}^{+\infty}e^{-2C_W(s-s_+)}ds=\frac{1}{4}C_W\delta_+^2,
\end{split}\]
 This and the analogous computation for $J(w_-)$ establish \eqref{sigmadelta} for $\delta_-$ and $\delta_+$ positive. Clearly \eqref{sigmadelta} is valid also if $\delta_-$ or $\delta_+$ vanishes. The proof is complete.
\end{proof}

\begin{lemma}
\label{basic}
Consider a smooth family of lines that are transversal to two distinct $n-1$ dimensional surfaces $S$ and  $S^\prime$. Consider a point $p\in S$ and let $e$ be a unit vector parallel to the line of the family through $p$. Let $\nu$ a unit vector normal to $S$ at $p$. Let $dS$  a small neighborhood of $p$ in $S$ and let $dS^\prime$  the set of the intersections of the lines through $dS$ with $S^\prime$. Finally let $d\Omega$ the union of all segments determined on the lines of the family by $dS$  and $dS^\prime$.
Let $v:O\subset\R^n\rightarrow\R^m$, $O\supset d\Omega$ open, a smooth map that satisfies
\begin{equation}
\begin{split}
&\vert v-a_-\vert\leq\delta,\;\;x\in S,\\
&\vert v-a_+\vert\leq\delta,\;\;x\in S^\prime.
\end{split}
\label{Con}
\end{equation}
Then
\[J_{d\Omega}(v)\geq (\sigma-C_W\delta^2)\min\{\nu\cdot e dS,\nu^\prime\cdot e dS^\prime\}.\]
This inequality still holds if \eqref{Con} is replaced by the condition that each segment in $\Omega$ contains a point where $\vert v-a\vert\leq\delta$ and a point where $\vert v-a_+\vert\leq\delta$.
\end{lemma}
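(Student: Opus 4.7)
The plan is to reduce the inequality to Lemma \ref{lower-sigma} applied line-by-line, by parametrizing $d\Omega$ using the given family of lines and keeping track of the Jacobian. First I would discard the transverse derivative: since $e$ is a unit vector, $\vert\nabla v\vert^2\geq\vert\partial_e v\vert^2$, hence
\[
J_{d\Omega}(v)\geq\int_{d\Omega}\Big(\tfrac{1}{2}\vert\partial_e v\vert^2+W(v)\Big)dx.
\]

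Next, I would parametrize $d\Omega$ by $(y,s)\in dS\times[0,L(y)]$, where for each $y\in dS$ the curve $s\mapsto\Phi(y,s)$ is the arclength parametrization of the line of the family through $y$ (so $\Phi(y,0)=y\in S$ and $\Phi(y,L(y))\in S'$). A direct computation shows that the Jacobian of $\Phi$ at $s=0$ is $\vert\nu(y)\cdot e(y)\vert$, i.e.\ the cosine of the angle between the line and the normal to $S$ at $y$. Taking $dS$ small enough that $\nu$, $e$ and the Jacobian are essentially constant on $dS$ (this is the infinitesimal reading of the statement), Fubini together with the previous step yields
\[
J_{d\Omega}(v)\geq \vert\nu\cdot e\vert\,dS\cdot\int_0^{L(y)}\Big(\tfrac{1}{2}\vert\partial_s(v\circ\Phi)\vert^2+W(v\circ\Phi)\Big)ds,
\]
for each $y\in dS$. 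Along every such line, $v\circ\Phi(y,\cdot)$ connects a point in $\Gamma_\delta(a_-)$ at $s=0$ to a point in $\Gamma_\delta(a_+)$ at $s=L(y)$, so Lemma \ref{lower-sigma} with $\delta_-=\delta_+=\delta$ bounds the one-dimensional energy from below by $\sigma-C_W\delta^2$, giving
\[
J_{d\Omega}(v)\geq (\sigma-C_W\delta^2)\,\vert\nu\cdot e\vert\,dS.
\]

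Repeating the argument with the lines parametrized starting from $dS'$ (with normal $\nu'$) produces the symmetric estimate with $\vert\nu'\cdot e\vert\,dS'$ in place of $\vert\nu\cdot e\vert\,dS$, and taking the larger of the two parametrizations---i.e.\ the one that yields the larger Jacobian---corresponds to taking the \emph{minimum} of the two products in the final bound, since we can only trust whichever side gives the smaller transverse mass. For the weakened hypothesis stated at the end, on each line segment of the family I replace the endpoint parameters by the interior points $s_-(y)$, $s_+(y)$ where $\vert v-a_-\vert\leq\delta$ and $\vert v-a_+\vert\leq\delta$, apply Lemma \ref{lower-sigma} to the subinterval $(s_-(y),s_+(y))$, and discard the nonnegative energy on the complement.

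The main technical issue is the Jacobian: for a general smooth family of lines it is not constant along a given line, and $\vert\nu\cdot e\vert$ and $\vert\nu'\cdot e\vert$ can differ substantially. Both obstacles are absorbed in the infinitesimal reading of the statement ($dS$ and $dS'$ are small neighborhoods) and in the $\min$, which picks whichever end-parametrization gives the safer lower bound on the transverse measure.
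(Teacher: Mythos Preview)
Your proof is correct and is precisely the argument the paper has in mind: the paper's own proof consists of the single line ``Follows from Fig.~\ref{fig1} and Lemma~\ref{lower-sigma}'', and your write-up unpacks exactly that---slice $d\Omega$ by the family of lines, drop the transverse part of $\vert\nabla v\vert^2$, use the change-of-variables Jacobian $\vert\nu\cdot e\vert$ (respectively $\vert\nu'\cdot e\vert$) to pass to the one-dimensional integrals, and apply Lemma~\ref{lower-sigma} on each line. Your treatment of the weakened hypothesis (restrict to the subinterval between the two interior points and discard the nonnegative remainder) is also exactly what is intended.
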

\begin{proof}
Follows from Fig. \ref{fig1} and Lemma \ref{lower-sigma}.
\end{proof}

\begin{figure}
  \begin{center}
\begin{tikzpicture}
\draw [](-5,-1.5).. controls (-4.5,-1)..(-4,.7);
\node at (-4.5,0){$S$};

\draw [](-1.8,1).. controls (-1,-.5) ..(1,-2);
\node at (-1.4,1.2){$S^\prime$};

\draw [thick,->](-4.45,-.71)--(-3.8,-.69);
\draw [thick,->](-4.45,-.71)--(-3.9,-.9);
\draw [thick,->](-.7,-.65)--(-.3,-.25);

\node at (-3.9,-.43){$e$};

\node at (-4,-1.25){$\nu$};

\node at (-.3,-.05){$\nu^\prime$};

\draw (-7,-1)--(1,-1);
\draw (-7,-.8)--(1,-.6);
\draw (-7,-.6)--(1,-.1);
\node at (-4.7,-.72){$d S$};
\node at (-3.2,-.55){$\Omega$};


\end{tikzpicture}
\end{center}
\caption{The region $\omega$.}
\label{fig1}
\end{figure}
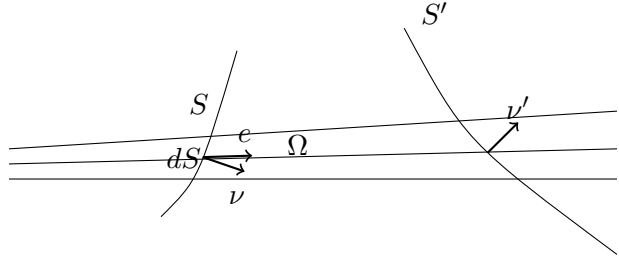

\section{Lower and upper bounds for $J(u^R)$.}\label{LandU}

We construct a lower bound for the energy of $u^R$ by minimizing in each fiber of radius $r\in(\bar{r},R)$, for some fixed $\bar{r}>0$, the energy
\begin{equation}
J_r(v)=\int_{(0,2\pi r)}\Big(\frac{1}{2}\vert\dot{v}\vert^2+W(v)\Big)ds
\label{per-energy}
\end{equation}
in the class $\mathscr{V}_r\subset H^1$ of the map $v$ which are $2\pi r$-periodic
and $C_N$-equivariant:
 \begin{equation}
 u_r(s+\frac{2}{N}\pi r)=\omega u_r(s),\;\;s\in\R.
 \label{s-equi}
 \end{equation}
\begin{proposition}
There exists $\bar{r}>0$ such that $r\geq\bar{r}$ implies the existence of a minimizer $u_r\in\mathscr{V}_r$ of \eqref{per-energy}.
\end{proposition}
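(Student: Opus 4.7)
The plan is to apply the direct method of the calculus of variations to $J_r$ on $\mathscr{V}_r$. First I would exhibit an explicit competitor to show the infimum is finite. For $r$ sufficiently large, choose $L$ with $0 < L < \pi r/N$ and define on $[0,2\pi r/N]$ a smooth curve that coincides with $\bar u(s-\pi r/N)$ on $|s-\pi r/N|\leq L$ and interpolates linearly on the remaining pieces to $a$ near $s=0$ and to $\omega a$ near $s=2\pi r/N$. Because $\bar u$ converges exponentially to $a$ and $\omega a$, this interpolation contributes $o(1)$ energy as $r\to\infty$. Extending to $\R$ by the rule $v(s+2\pi r/N)=\omega v(s)$ yields an element of $\mathscr{V}_r$ with $J_r(v)\leq N\sigma+o(1)$. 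The threshold $\bar r$ is chosen precisely so that a heteroclinic profile fits comfortably inside a fundamental segment of length $2\pi r/N$.

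Second, I would show that a minimizing sequence $\{v_k\}\subset\mathscr{V}_r$ can be taken uniformly bounded. By $\mathbf{H}_2$, $W_u(u)\cdot u\geq 0$ for $|u|\geq M$, so the radial map $t\mapsto W(tu/|u|)$ is nondecreasing on $[M,|u|]$ for any $|u|\geq M$. Define the pointwise radial projection
\[
v_k^{*}(s):=\begin{cases} v_k(s), & |v_k(s)|\leq M,\\ M\,v_k(s)/|v_k(s)|, & |v_k(s)|>M.\end{cases}
\]
Projection onto a convex set is $1$-Lipschitz, so $|\dot v_k^{*}|\leq|\dot v_k|$ a.e., and the radial monotonicity gives $W(v_k^{*})\leq W(v_k)$ pointwise. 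Since the projection commutes with the orthogonal action of $\omega$ and preserves periodicity, $v_k^{*}\in\mathscr{V}_r$ and $J_r(v_k^{*})\leq J_r(v_k)$. Hence I may replace $v_k$ by $v_k^{*}$ and assume $\|v_k\|_\infty\leq M$.

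Third, the $L^\infty$ bound together with $\int|\dot v_k|^2\leq 2J_r(v_k)\leq C$ yields uniform $H^1$ bounds on one period. By Banach--Alaoglu and the compact embedding $H^1((0,2\pi r))\hookrightarrow C^0$, a subsequence converges weakly in $H^1$ and uniformly to some $u_r$. The kinetic term is weakly lower semicontinuous and the potential term is continuous under uniform convergence (since $W$ is continuous and the $v_k$ are uniformly bounded), so $J_r(u_r)\leq\liminf_k J_r(v_k)=\inf_{\mathscr{V}_r}J_r$. The relations $v_k(s+2\pi r)=v_k(s)$ and $v_k(s+2\pi r/N)=\omega v_k(s)$ are closed under uniform convergence, so $u_r\in\mathscr{V}_r$ and is the desired minimizer.

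The step requiring the most care is the radial truncation: one must verify that the radial projection onto $\bar B_M$ strictly decreases (or at least does not increase) both the Dirichlet and potential parts. The Dirichlet part is immediate from nonexpansiveness of the projection onto a convex set, but the potential part uses in an essential way the monotonicity encoded in $\mathbf{H}_2$, without which a minimizing sequence could a priori escape to infinity and destroy compactness. Everything else is a standard application of the direct method, and the lower bound $\bar r$ enters only through the construction of the initial competitor used to keep the infimum finite and of order $N\sigma$.
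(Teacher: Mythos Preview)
Your proof is correct and follows the same overall strategy as the paper: build an explicit $C_N$-equivariant competitor out of the heteroclinic $\bar u$ to show the infimum is finite and of order $N\sigma$, then apply the direct method. The paper's competitor is assembled slightly differently---it truncates $\bar u$ at the points $t_\delta,t^\delta$ where it enters $\partial B_\delta(a)$ and $\partial B_\delta(\omega a)$, inserts a short linear arc of length $\tau\leq 2/C_W$ inside $B_\delta(a)$ to close up, and then \emph{chooses} $\delta$ so that $N(t^\delta-t_\delta+\tau)=2\pi r$ exactly; this yields the sharper bound $J_r\leq N(\sigma+2C_W\delta^2)$, which is later reused. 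Your construction (centering $\bar u$ and interpolating linearly to $a$ and $\omega a$ at the ends) is conceptually the same and equally valid for existence, though you should make explicit that $L$ is allowed to grow with $r$ so that the potential contribution of the interpolating pieces, which scales like $r\,|\bar u(\pm L)-a_\pm|^2$, indeed vanishes. Finally, you spell out the radial truncation via $\mathbf{H}_2$ and the lower-semicontinuity step, whereas the paper simply invokes ``classical arguments of variational calculus''; your added detail is correct and useful.
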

\begin{proof}
1.  Given $z_\pm\in\partial B_\delta(a)$, $z_+\neq z_-$, set
\begin{equation}
\tau=\frac{\vert z_+-z_-\vert}{C_W\delta}\leq\frac{2}{C_W},
\label{tau}
\end{equation}
and define $\tilde{u}(t)=z_-+\frac{t}{\tau}(z_+-z_-)$, $t\in[0,\tau]$. We have
\begin{equation}
\begin{split}
&J_{(0,\tau)}(\tilde{u})=\int_0^\tau\Big(\frac{1}{2}\vert\frac{d\tilde{u}}{dt}\vert^2+W(\tilde{u})\big)dt
\leq\frac{1}{2}\int_0^\tau(\frac{1}{\tau^2}\vert z_+-z_-\vert^2+C_W^2\delta^2)dt\\
&=\frac{1}{2}(\frac{\vert z_+-z_-\vert^2}{\tau}+C_W^2\delta^2\tau)=C_W\delta\vert z_+-z_-\vert\leq 2C_W\delta^2
,
\end{split}
\label{JTAU}
\end{equation}
where we have also used  Lemma \ref{W-lemma}.

 2. The minimality of $\bar{u}$ implies that, for small $\delta>0$,  $\bar{u}(\R)\cap\partial B_\delta(\omega^{j-1}a)$, $j=1,2$, is a singleton (see e.g. Lemmas 2.4 and 2.5 in \cite{afs}). It follows that we can choose $z_\pm$ and determine $t_\delta, t^\delta$ by setting:

 \begin{equation}
 \begin{split}
 &z_+=\bar{u}(t_\delta)\in\partial B_\delta(a),\\
 &\omega z_-=\bar{u}(t^\delta)\in\partial B_\delta(\omega a).
 \end{split}
 \label{zeta}
 \end{equation}

 For each $\delta\in(0,\bar{\delta}]$ let $u^\delta$ be the map defined by

 \begin{equation}
 u^\delta(t)=\left\{\begin{array}{l}
 \tilde{u}(t),\;\;t\in[0,\tau)\\
 \bar{u}(t_\delta+t-\tau),\;\;t\in[\tau, t^\delta-t_\delta+\tau).
 \end{array}\right.
 \label{udelta}
 \end{equation}
 Note that the definition of $z_-$ implies $\omega u^\delta(0)=\omega\tilde{u}(0)=\omega z_-=\bar{u}(t^\delta)=u^\delta( t^\delta-t_\delta+\tau)$. It follows that $u^\delta$ can be extended to a $C_N$- equivariant periodic map of period $N( t^\delta-t_\delta+\tau)$.
We have, using also \eqref{JTAU}
\begin{equation}
J(u^\delta)=J_{(t_\delta,t^\delta)}(\bar{u})+J_{(0,\tau)}(\tilde{u})\leq\sigma+2C_W\delta^2.
\label{Jdel-ta}
\end{equation}

3. For $\delta\in(0,\delta_W]$ we can write $\bar{u}=a+\delta n$ with $\delta=\vert\bar{u}-a\vert$ and $n=\frac{\bar{u}-a}{\vert\bar{u}-a\vert}$. This and $\frac{1}{2}\vert\frac{d\bar{u}}{dt}\vert^2=W(\bar{u})$ imply
\[\begin{split}
&\frac{d\delta}{dt}\leq\vert\frac{d\bar{u}}{dt}\vert=\sqrt{2W(\bar{u})}\leq C_W\delta,\\
&\Rightarrow\\
&t_\delta\leq t_{\delta_W}-\frac{1}{C_W}\ln{\frac{\delta_W}{\delta}}.
\end{split}\]
A similar computation yields $t^\delta\geq t^{\delta_W}+\frac{1}{C_W}\ln{\frac{\delta_W}{\delta}}$.
Hence
\[ t^\delta-t_\delta+\tau\geq t^{\delta_W}-t_{\delta_W}+\frac{2}{C_W}\ln{\frac{\delta_W}{\delta}}.\]
From this and \eqref{tau} it follows that, given $r\geq\bar{r}=\frac{N}{2\pi}( t^{\delta_W}-t_{\delta_W}+\frac{2}{C_W})$, there is $\delta\in(0,\delta_W]$ such that
\begin{equation}
r=\frac{N}{2\pi} (t^\delta-t_\delta+\tau).
\label{teta-exs}
\end{equation}

4. From \eqref{teta-exs} and \eqref{Jdel-ta} it follows that the set of $2\pi r$-periodic $C_N$-equivariant map with bounded energy is nonempty and therefore that the existence of a minimizer $u_r$ follows by classical arguments of variational calculus. The minimizer $u_r$ is a solution of \eqref{AC}. The proof is complete.
\end{proof}

\begin{lemma}
 \label{ur}
Given $\delta\in(0,\delta_W]$. Assume $r\geq r_\delta=\frac{4N\sigma}{\pi c_W^2\delta^2}$ and let $u_r$ be a minimizer of \eqref{per-energy}. Then
\[u_r\in\mathscr{V}_r^*,\]
 where $\mathscr{V}_r^*\subset\mathscr{V}_r$ is the set of maps that, after a suitable translation of the independent variable, satisfy
\[\begin{split}
&v(s)\in B_\delta(a),\;\;s\in(0,\frac{2\pi r}{N}-s_{v,\delta}),\;\;\text{some}\;s_{v,\delta}\in(0,\frac{4\sigma}{c_W^2\delta^2}),\\
&v(\frac{2\pi r}{N})\in B_\delta(a^\prime),\;\;\text{some}\;\;a^\prime\in\{\omega a,\omega^{-1}a\},\\
&v(s)\not\in\cup_{\tilde{a}\in A} B_\delta(\tilde{a}),\;\;s\in[\frac{2\pi r}{N}-s_{v,\delta},\frac{2\pi r}{N}].
\end{split}\]
\end{lemma}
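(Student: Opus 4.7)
The plan is to combine an upper bound on $J_r(u_r)$ built from the test-function construction of the preceding proposition with the lower bound of Lemma \ref{lower-sigma} and the potential estimate \eqref{outW}, thereby forcing the minimizer to have precisely one transition per fundamental arc of length $L:=2\pi r/N$. The upper bound is obtained by appending to the basic block $u^\delta$ a constant-at-$a$ stretch of length $L-T_0$, with $T_0:=t^\delta-t_\delta+\tau$, joined at each end to $u^\delta$ by a short linear cap of length $1/C_W$. By the AM--GM computation underlying \eqref{JTAU} each cap costs at most $C_W\delta^2$, so the energy per fundamental period is at most $\sigma+4C_W\delta^2$. Extending $C_N$-equivariantly and using that the integrand is $C_N$-invariant, the minimality of $u_r$ then yields
\[
J_{(0,L)}(u_r)\leq \sigma+4C_W\delta^2.
\]

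Define $E_{out}:=\{s\in(0,L):d(u_r(s),A)\geq\delta\}$. Since $u_r$ is a priori uniformly bounded by $\mathbf{H}_2$, \eqref{outW} gives $W(u_r)\geq\tfrac12 c_W^2\delta^2$ on $E_{out}$, whence together with the displayed upper bound
\[
|E_{out}|\leq \frac{2(\sigma+4C_W\delta^2)}{c_W^2\delta^2};
\]
shrinking $\delta_W$ so that $4C_W\delta^2\leq\sigma$ for $\delta\in(0,\delta_W]$ bounds this by $4\sigma/(c_W^2\delta^2)$, the target range for $s_{v,\delta}$. Now call a \emph{transition} a maximal subinterval $[s_-,s_+]\subset E_{out}$ whose endpoints lie on distinct spheres $\partial B_\delta(a^\pm)$ with $a^-\neq a^+$: by Lemma \ref{lower-sigma} it contributes at least $\sigma-C_W\delta^2$ to the arc energy, so at most one transition fits in the budget $\sigma+4C_W\delta^2$ once $\delta$ is small. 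Since $|E_{out}|<L$ (because $L\geq 8\sigma/(c_W^2\delta^2)$), $u_r$ enters some well $B_\delta(a^*)$ on the arc, and then continuity together with the equivariance $u_r(L)=\omega u_r(0)\in B_\delta(\omega a^*)\neq B_\delta(a^*)$ forces the transition count to be exactly one, with endpoints $a^*\to\omega a^*$.

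A shift of the independent variable first renames the starting well so that $a^*=a$, and a further shift places the unique transition at the right end of the arc; setting $v(s):=u_r(s+s_0)$ and defining $s_{v,\delta}$ as the length of the terminal sub-interval on which $v\notin\bigcup_{\tilde a\in A}B_\delta(\tilde a)$ gives the required profile, with $v(L)=\omega v(0)\in B_\delta(\omega a)$ and $s_{v,\delta}\leq|E_{out}|<4\sigma/(c_W^2\delta^2)$. The delicate point, and my main anticipated obstacle, is ruling out \emph{excursions} that would make $v$ leave and re-enter $B_\delta(a)$ inside the good interval $(0,L-s_{v,\delta})$, since Lemma \ref{lower-sigma} supplies no lower bound for such same-well loops. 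I would eliminate excursions of length exceeding $4C_W/c_W^2$ by a cut-and-paste argument (replacing the loop by a short linear chord to $a$ of energy at most $2C_W\delta^2$, applied $C_N$-equivariantly across all $N$ arcs, which contradicts minimality); any remaining short excursions are absorbed into $s_{v,\delta}$ without breaking the bound coming from the $|E_{out}|$ estimate.
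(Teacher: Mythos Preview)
Your overall strategy matches the paper's: obtain the upper bound $J_{(0,L)}(u_r)\leq\sigma+O(\delta^2)$ from the test map, bound the out-of-well set via \eqref{outW}, and use Lemma~\ref{lower-sigma} to cap the number of transitions at one per fundamental arc. The paper's Step~2 (ruling out visits to wells other than $a$ and $a'$) is also subsumed in your transition-count argument.

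The gap is exactly where you flagged it, and your proposed fix does not close it. After your cut-and-paste removes excursions of length $>4C_W/c_W^2$, nothing prevents several short excursions from sitting at well-separated locations inside the would-be good interval $(0,L-s_{v,\delta})$. Their total \emph{measure} is controlled by $|E_{out}|$, but the condition defining $\mathscr{V}_r^*$ requires $v\in B_\delta(a)$ on a single \emph{contiguous} interval; separated excursions cannot be ``absorbed'' into a terminal $s_{v,\delta}$ without making $s_{v,\delta}$ at least as large as the span from the first excursion to $L$, and that span is not bounded by $|E_{out}|$. Moreover a short excursion may protrude only slightly past $\partial B_\delta(a)$ and thus carry arbitrarily small energy, so no threshold-based energy comparison of the kind you propose can eliminate it.

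The paper handles this by invoking Lemmas~2.4 and~2.5 of \cite{afs}, which give the no-re-entry property for one-dimensional minimizers directly. The mechanism behind those lemmas is a different replacement from your linear chord: writing $u_r=a+qn$ in polar form, on any excursion one has $q>\delta$ while $u_r$ stays away from the other wells (by the transition count), so replacing $q$ by $\min\{q,\delta\}$ and keeping $n$ fixed matches the endpoints and strictly lowers both the kinetic term (the $\tfrac12\dot q^2$ contribution disappears and $q^2|\dot n|^2$ shrinks) and the potential term (by Lemma~\ref{W-mon}). This works uniformly for excursions of \emph{every} length and produces a strict energy drop, contradicting minimality. Substituting this radial-truncation replacement for your chord-plus-absorption step completes the argument.
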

\begin{proof}
1. Define $\Lambda_{r,\delta}=\{s\in[0,2\pi r):\min_j\vert u_r(s)-\omega^{j-1}a\vert\geq\delta\}$. From \eqref{Jdel-ta}
 and \eqref{outW} it follows
\[\begin{split}
&\frac{1}{2}c_W^2\delta^2\vert\Lambda_{r,\delta}\vert\leq\int_0^{2\pi r}W(u_r)ds<J_r(u_r)<2N\sigma,\\
&\Rightarrow\quad\vert\Lambda_{r,\delta}\vert\leq\frac{2N(\sigma+2C_W\delta^2)}{c_W^2\delta^2}
<\frac{4N\sigma}{c_W^2\delta^2}=\pi r_\delta.\end{split}\]
Hence $r\geq r_\delta$ implies the existence of $\bar{s}$ such that $u_r(\bar{s})\in B_\delta(\omega^{j-1}a)$ for some $j\in\{1,\ldots,N\}$. By equivariance and modulus a translation of the independent variable we can assume that $j=1$ and $\bar{s}=0$. Then by equivariance we have $u_r(\frac{2\pi}{N}r)\in B_\delta(a^\prime)$ with $a^\prime\in\{\omega a,\omega^{-1}a\}$.

2. It results
\[u_r([0,\frac{2\pi}{N}r])\cap B_\delta(\tilde{a})=\emptyset,\;\;\tilde{a}\not\in\{a,a^\prime\}.\]
Assume instead that $u_r(s)\in B_\delta(\tilde{a})$ for some $\tilde{a}\not\in\{a,a^\prime\}$ and for some $s\in(0,\frac{2\pi}{N}r)$. If this is the case, Lemma \ref{lower-sigma} implies
\[J_{(0,s)}(u_r)\geq\sigma-C_W\delta^2,\quad\quad J_{(s,\frac{2\pi}{N}r)}(u_r)\geq\sigma-C_W\delta^2,\]
hence $J_{(0,\frac{2\pi}{N}r)}(u_r)\geq 2(\sigma-C_W\delta^2)$ in contradiction with \eqref{Jdel-ta}.

3. Finally we observe that the arguments in Lemma 2.4 and Lemma 2.5 in \cite{afs} imply that the minimizer $u_r$, once has entered the ball $B_\delta(a)$ can not reenter in it before entering the ball $B_\delta(a^\prime)$. This and equivariance imply that $\Lambda_{r,\delta}$ is the union of $N$ equal intervals of size $s_\delta<\frac{4\sigma}{c_W^2\delta^2}$. This concludes the proof.
\end{proof}

From Lemmas \ref{lower-sigma} and \ref{ur} it follows that the energy of $u_r$ has a lower bound
\begin{equation}
J_r(u_r)\geq N(\sigma-C_W\delta^2),
\label{lb-ur0}
\end{equation}
where we have also used that $u_r$ is $C_N$-equivariant. From Lemma \ref{ur} and a classical comparison argument \eqref{lb-ur0} can be upgraded to a sharp lower bound that depends on $r$. This is the content of the following lemma.

\begin{lemma}
\label{ur-exp}
There exist $r_{\bar{\delta}}>0$, $\bar{k}>0$ and $\bar{K}>0$ such that, for $r\geq r_{\bar{\delta}}$, it results
\[J_r(u_r)\geq N\sigma-\bar{K} e^{-\bar{k}r},\]
\end{lemma}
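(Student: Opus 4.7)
The plan is to fix a small $\bar\delta\in(0,\delta_W]$ once and for all, invoke Lemma~\ref{ur} with $\delta=\bar\delta$ to pin down the plateau-plus-transition structure of $u_r$, and then upgrade the crude per-transition bound $\sigma-C_W\bar\delta^2$ to one with an exponentially small error by showing that $u_r$ is in fact exponentially close to $a$ (respectively to $\omega^{j-1}a$) at the midpoints of the plateaux. Once this is in hand, a second application of Lemma~\ref{lower-sigma} with $\delta_\pm$ exponentially small, combined with $C_N$-equivariance, delivers the claimed inequality with $\bar k,\bar K>0$ depending only on $W$ and $N$.

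Setting $r_{\bar\delta}:=\tfrac{4N\sigma}{\pi c_W^2\bar\delta^2}$, Lemma~\ref{ur} provides (after translation) an interval $[0,T]$ with $T=\tfrac{2\pi r}{N}-s_{u_r,\bar\delta}\geq \tfrac{2\pi r}{N}-\tfrac{4\sigma}{c_W^2\bar\delta^2}$, on which $u_r(s)\in B_{\bar\delta}(a)$ with $|u_r(0)-a|=|u_r(T)-a|=\bar\delta$. Setting $f(s):=|u_r(s)-a|^2$ and using $\ddot u_r=W_u(u_r)$ one computes
\[
 f''(s)=2|\dot u_r(s)|^2+2\bigl(u_r(s)-a\bigr)\cdot W_u(u_r(s)).
\]
Expanding $W_u$ to second order around $a$ (using $W_u(a)=0$ and letting $\lambda^2>0$ denote the smallest eigenvalue of $W_{uu}(a)$) gives $(u_r-a)\cdot W_u(u_r)\geq \lambda^2|u_r-a|^2-C|u_r-a|^3$ throughout $B_{\delta_W}(a)$. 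Choosing $\bar\delta$ small enough that $C\bar\delta\leq \lambda^2/2$ and discarding the non-negative kinetic term yields the scalar inequality $f''\geq \lambda^2 f$ on $[0,T]$. Comparison with the explicit solution $g(s)=\bar\delta^2\,\cosh\!\bigl(\lambda(s-T/2)\bigr)/\cosh(\lambda T/2)$ of $g''=\lambda^2 g$ with matching boundary data---ruling out an interior positive maximum of $h:=f-g$ via the inequality $h''\geq \lambda^2 h$---gives $f\leq g$; at the midpoint this becomes $|u_r(T/2)-a|\leq K_1 e^{-k_1 r}$ for explicit $k_1,K_1>0$.

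To conclude, I invoke $C_N$-equivariance: at $s'_m:=T/2+\tfrac{2\pi r}{N}$ the analogous argument around $\omega a$ yields $|u_r(s'_m)-\omega a|\leq K_1 e^{-k_1 r}$, so Lemma~\ref{lower-sigma} applied on $(T/2,s'_m)$ with $\delta_\pm=K_1 e^{-k_1 r}$ gives $J_{(T/2,s'_m)}(u_r)\geq \sigma-C_W K_1^2 e^{-2k_1 r}$. The $N$ translates of this interval under $s\mapsto s+j\tfrac{2\pi r}{N}$, $j=0,\dots,N-1$, tile one full period of $u_r$, so summation produces $J_r(u_r)\geq N\sigma-\bar K e^{-\bar k r}$ with $\bar k=2k_1$ and $\bar K=NC_W K_1^2$. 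The only delicate step I anticipate is the uniform control of the Taylor remainder in the differential inequality, which dictates the choice of $\bar\delta$ (and hence of $r_{\bar\delta}$) in terms of $\lambda$ and the local $C^3$-norm of $W$ near $a$; once $\bar\delta$ is fixed, everything else is a straightforward assembly of Lemmas~\ref{lower-sigma} and~\ref{ur} with the linearized dynamics at the stable equilibrium $a$.
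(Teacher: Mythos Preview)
Your proof is correct and follows essentially the same route as the paper: derive a differential inequality for $f(s)=|u_r(s)-a|^2$ from the nondegeneracy of $W_{uu}(a)$, compare with the $\cosh$ solution on the plateau provided by Lemma~\ref{ur} to get exponential smallness at the midpoint, then feed this into Lemma~\ref{lower-sigma} and sum over the $N$ equivariant copies. The only cosmetic differences are that the paper packages the Taylor-remainder control directly into the constant $c_W$ (via an inequality $W_u(z)\cdot(z-a)\geq c_W^2|z-a|^2$ on $B_{\delta_W}(a)$) rather than introducing a separate eigenvalue $\lambda$, and it is slightly less explicit about the tiling of the period by the $N$ translated intervals.
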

\begin{proof}
1. We can assume that the constants $\delta_W>0$ and $c_W>0$ in Lemma \ref{W-lemma} are such that
\begin{equation}
\vert z-a\vert\leq\delta\in(0,\delta_W]\;\;\Rightarrow\; W_u(z)\cdot(z-a)\geq c_W^2\vert z-a\vert^2.
\label{Wuu-a}
\end{equation}

This follows from $W_u(z)=W_{uu}(a)(z-a)+\mathrm{o}(\vert z-a\vert)$ and from the assumption that $W_{uu}(a)$ is positive definite.

2. Set $\varrho=\vert u_r-a\vert$. Since $u_r$ is a solution of \eqref{AC}
Step 1. implies
\begin{equation}
\frac{d^2}{ds^2}\frac{\varrho^2}{2}=\vert\frac{d}{ds}\varrho\vert^2+\frac{d^2}{ds^2}u_r\cdot(u_r-a)
\geq W_u(u_r)\cdot(u_r-a)\geq c_W^2\varrho^2.
\label{Comp}
\end{equation}

3. From Lemma \ref{ur} with $\delta=\delta_W$, for $r\geq r_{\delta_W}=\frac{2 N\sigma}{\pi c_W^2\delta_W^2}$, we have $\varrho(0)\leq\delta_W^2$ and $\varrho(\frac{2\pi}{N}r-s_{\delta_W})\leq\delta_W^2$. This and \eqref{Comp} imply
\begin{equation}
\varrho(s)\leq\delta_W^2
\frac{\cosh{c_W(\frac{\pi}{N}r-\frac{s_{\delta_W}}{2}-s)}}{\cosh{c_W(\frac{\pi}{N}r-\frac{s_{\delta_W}}{2})}},
\label{varro}
\end{equation}
where the right end side is the solution of $\frac{d^2}{ds^2}v=c_W^2v$ that satisfies $v=\delta_W^2$ at the extreme of the interval $(0,\frac{2\pi}{N}r-s_{\delta_W})$.

4. For $s=\frac{\pi}{N}r-\frac{s_{\delta_W}}{2}$ \eqref{varro} yields
\[\varrho(\frac{\pi}{N}r-\frac{s_{\delta_W}}{2})\leq\frac{\delta_W^2}{\cosh{c_W(\frac{\pi}{N}r-\frac{s_{\delta_W}}{2})}}
\leq 2\delta_W^2e^{c_W\frac{s_{\delta_W}}{2}}e^{-c_W\frac{\pi}{N}r}.\]
This and Lemma \ref{lower-sigma} imply
\[J_r(u_r^*)\geq N\sigma-NC_W2\delta_W^2e^{c_W\frac{s_{\delta_W}}{2}}e^{-c_W\frac{\pi}{N}r}\]
that concludes the proof with $\bar{k}=c_W\frac{\pi}{N}$ and $\bar{K}=NC_W2\delta_W^2e^{c_W\frac{s_{\delta_W}}{2}}$.
\end{proof}

From Lemma \ref{ur-exp} we immediately get

\begin{equation}
\begin{split}
&J_{B_R}(u^R)\geq\int_0^R J_r(u^R(r,\frac{\cdot}{r}))dr\geq\int_{r_{\bar{\delta}}}^RJ_r(u_r)dr\\
&\geq N\sigma R-\bar{K}\int_{r_{\bar{\delta}}}^Re^{-\bar{k}r}dr\\
&\geq N\sigma R-\frac{\bar{K}}{\bar{k}}(e^{-\bar{k}r_{\bar{\delta}}}-e^{-\bar{k}R})\geq N\sigma R-C_0,
\end{split}
\label{LB}
\end{equation}
where $C_0>0$ is a constant independent of $R$. We denote by $C,C_0,C_1,\ldots$ generic positive constants that do not depend on $R$.

To derive an upper bound we choose a suitable $C_N$-equivariant test function $u_{\mathrm{test}}:B_R\rightarrow\R^m$
and obtain $J_{B_R}(u^R)\leq J_{B_R}(u_{\mathrm{test}})$.

Set $\theta_N=\frac{2\pi}{N}$. We first define a map $\tilde{u}_{\mathrm{test}}$ in the whole of $\R^2$ and then we identify $u_{\mathrm{test}}$ with the restriction of $\tilde{u}_{\mathrm{test}}$ to $B_R$.
 For $\rho\in(0,+\infty]$  $\alpha<\beta$ we denote by $S_\rho(\alpha,\beta)$ the sector defined by
  \[S_\rho(\alpha,\beta)=\{z(r,\theta):r\in(0,\rho),\;\theta\in[\alpha,\beta]\}.\]
  We begin to define $\tilde{u}_{\mathrm{test}}$ in the sector $S_\infty(-\frac{1}{4}\theta_N,\frac{1}{4}\theta_N)$ . We note that $\omega^{-1}\bar{u}$ connects $\omega^{-1}a$ to $a$ and set
\begin{equation}
\tilde{u}_{\mathrm{test}}=\omega^{-1}\bar{u}(y),\;\;\vert y\vert\leq\tan{\frac{\theta_N}{4}}x,\;x\geq 0.
\label{in-S}
\end{equation}
Then we extend $\tilde{u}_{\mathrm{test}}$ to the sector $S_\infty(j\theta_N-\frac{1}{4}\theta_N,j\theta_N+\frac{1}{4}\theta_N)$ , $j=1,\ldots,N-1$ by equivariance. We now observe that from the above construction $\tilde{u}_{\mathrm{test}}$ is already defined on the boundary of the sector $S_\infty(\frac{1}{4}\theta_N,\frac{3}{4}\theta_N)$ and we have
\begin{equation}
\begin{split}
&\tilde{u}_{\mathrm{test}}(z(r,\frac{1}{4}\theta_N))=\omega^{-1}\bar{u}(\sin{\frac{1}{4}\theta_N}r),\\
&\tilde{u}_{\mathrm{test}}(z(r,\frac{3}{4}\theta_N))=\bar{u}(-\sin{\frac{1}{4}\theta_N}r).
\end{split}
\label{boudry-S}
\end{equation}
Based on this we define $\tilde{u}_{\mathrm{test}}$ in the sector $S_\infty(\frac{1}{4}\theta_N,\frac{3}{4}\theta_N)$ by setting
\[\tilde{u}_{\mathrm{test}}= \omega^{-1}\bar{u}(\sin{\frac{\theta_N}{4}}r)(\frac{3}{2}-2\frac{\theta}{\theta_N})
 +\bar{u}(-\sin{\frac{\theta_N}{4}}r)(2\frac{\theta}{\theta_N}-\frac{1}{2}),\]
 and extend it by equivariance to the sector $S_\infty(j\theta_N+\frac{1}{4}\theta_N,j\theta_N+\frac{3}{4}\theta_N)$, $j=1,\ldots,N-1$. This complete the definition of $\tilde{u}_{\mathrm{test}}$ as a continuous $C_N$-equivariant $H_{\mathrm{loc}}^1$ map.
Since both the functions on the right hand side of \eqref{boudry-S} converge exponentially to $a$ as $r\rightarrow+\infty$ with first derivatives that converge exponentially to $0$ we have
\[J_{S_\infty(\frac{1}{4}\theta_N,\frac{3}{4}\theta_N)}(\tilde{u}_{\mathrm{test}})\leq C,\;\;\text{some}\;C>0.\]
 On the other hand from \eqref{in-S} we have
\[J_{S_R(-\frac{1}{4}\theta_N,\frac{1}{4}\theta_N)}(\tilde{u}_{\mathrm{test}})
<\int_0^R\int_{-\infty}^{+\infty}(\frac{1}{2}\vert\bar{u}^\prime\vert^2+W(\bar{u}))dydx=\sigma R.\]
and we can conclude
\begin{equation}
\label{UB}
J_{B_R}(u^R)\leq J_{B_R}(\tilde{u}_{\mathrm{test}})\leq N\sigma R+C_1,
\end{equation}
for some constant $C_1>0$.

We remark that this and \eqref{LB} imply
\begin{equation}
\int_0^R\int_0^{2\pi}\vert\frac{\partial}{\partial r}u^R\vert^2 r d\theta dr\leq 2(C_0+C_1).
\label{du-dr-bound}
\end{equation}

\section{Quantitative estimate of the energy of maps near $u_r$.} \label{away-ur}

Next we derive a lower bounds for the energy of maps that do not have the structure of minimizers described in Lemma \ref{ur}.
As before let $\mathscr{V}_r\subset H^1$  the class of $2\pi r$-periodic $C_N$-equivariant maps. In the following, without explicit mention, we characterize a map $v\in\mathscr{V}_r$ by the properties of a suitable translation of it.
 \begin{proposition}
 \label{away} There is a constant $c>0$ such that, given $\alpha\in(0,1)$ and a number $\delta\in(0,\delta_W]$ sufficiently small,
 there are $r_\delta>0$, and $s_{v,\delta}\in(0,\frac{4\sigma}{c_W^2\delta^2})$ such that $r\geq r_\delta$ implies
 \[v\in\mathscr{V}_r\setminus\mathscr{V}_r^*\;\Rightarrow\;J_r(v)\geq N\sigma+\delta^{1+\alpha},\]
 where $\mathscr{V}_r^*\subset\mathscr{V}_r$ is the set of maps that after a suitable translation satisfy:
 \begin{equation}
 v(s)\in B_{c\delta^\alpha}(a),\;\;s\in(0,\frac{2\pi r}{N}-s_{v,\delta})
 \label{V*}
 \end{equation}
 \end{proposition}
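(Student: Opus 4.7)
My plan is to argue by contraposition: assume $v\in\mathscr{V}_r$ satisfies $J_r(v)<N\sigma+\delta^{1+\alpha}$ and show that, after a suitable translation, $v\in\mathscr{V}_r^*$. First I combine the energy bound with \eqref{outW} to obtain
\[|\Lambda_{r,\delta}|\le\frac{2(N\sigma+\delta^{1+\alpha})}{c_W^2\delta^2},\qquad \Lambda_{r,\delta}=\bigl\{s:\min\nolimits_j|v(s)-\omega^{j-1}a|\ge\delta\bigr\};\]
for $r\ge r_\delta$ this forces $v$ into some $B_\delta(\omega^{j-1}a)$, and by $C_N$-equivariance I may translate so that $v(0)\in\partial B_\delta(a)$ marks the first entry. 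Lemma \ref{lower-sigma} says any trajectory between two distinct balls $B_\delta(\cdot)$ costs at least $\sigma-C_W\delta^2$; periodicity plus equivariance demand at least $N$ such trips per period, so the total slack above $N(\sigma-C_W\delta^2)$ is at most $\delta^{1+\alpha}+NC_W\delta^2\le 2\delta^{1+\alpha}$ for small $\delta$, strictly less than one additional trip. Hence on $(0,\frac{2\pi r}{N})$, $v$ makes exactly one transit, from $B_\delta(a)$ to $B_\delta(\omega a)$ (or $B_\delta(\omega^{-1}a)$), and visits no other $B_\delta(\omega^j a)$.

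The crux is a lower bound for pure excursions, by which I mean maximal intervals $(s_1,s_2)\subset(0,\frac{2\pi r}{N})$ with $v(s_1),v(s_2)\in\partial B_\delta(a)$, $v$ outside $B_\delta(a)$ on $(s_1,s_2)$, and never entering any other $B_\delta(\omega^j a)$. Combining $\frac{1}{2}|\dot v|^2+W(v)\ge|\dot v|\sqrt{2W(v)}$, the eikonal-type bound $\sqrt{2W(z)}\ge c_W|z-a|$ from \eqref{W}, and $|\dot v|\ge\bigl|\frac{d}{ds}|v-a|\bigr|$, then recognising the integrand as $\bigl|\frac{d}{ds}\frac{|v-a|^2}{2}\bigr|$ and invoking total variation, I obtain
\[J(v,(s_1,s_2))\ge c_W\bigl(\rho_{\max}^2-\delta^2\bigr),\qquad \rho_{\max}:=\max_{[s_1,s_2]}|v(s)-a|,\]
(a short separate argument handles $\rho_{\max}>\delta_W$, where the excursion is even more costly by a $\delta$-independent amount). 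If $\rho_{\max}>c\delta^\alpha$ this exceeds $c_Wc^2\delta^{2\alpha}-c_W\delta^2$; since $\alpha<1$ implies $2\alpha<1+\alpha$, fixing $c$ once and for all sufficiently large and then taking $\delta$ small enough (in a way that may depend on $\alpha$) pushes this strictly above $2\delta^{1+\alpha}$, contradicting the slack. Therefore every pure excursion satisfies $\rho_{\max}\le c\delta^\alpha$, so $v$ stays inside $B_{c\delta^\alpha}(a)$ throughout the stay-near region $[0,s_a]$, where $s_a$ denotes the last exit of $B_\delta(a)$ in $(0,\frac{2\pi r}{N})$.

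To finish, I further arrange the translation so that $v(\frac{2\pi r}{N})\in\partial B_\delta(\omega a)$ marks the first entry into $B_\delta(\omega a)$. Let $\tau_1\ge s_a$ be the first exit of $B_{c\delta^\alpha}(a)$ and set $s_{v,\delta}:=\frac{2\pi r}{N}-\tau_1$. On $(s_a,\frac{2\pi r}{N})$, $v$ lies outside both $B_\delta(a)$ and $B_\delta(\omega a)$ by the translation, and since visits to other minima have been ruled out, $v$ is in $\Lambda_{r,\delta}$ on that stretch; equivariance bounds its length by $|\Lambda_{r,\delta}|/N<\frac{4\sigma}{c_W^2\delta^2}$ for small $\delta$. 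Hence $s_{v,\delta}<\frac{4\sigma}{c_W^2\delta^2}$ and $v(s)\in B_{c\delta^\alpha}(a)$ for $s\in(0,\frac{2\pi r}{N}-s_{v,\delta})$, as required. The main obstacle is the excursion estimate itself; everything else is bookkeeping built on the one-dimensional lemmas already in hand. The key quantitative fact is $2\alpha<1+\alpha$ for $\alpha\in(0,1)$, which lets the excursion penalty $\delta^{2\alpha}$ outpace the allowed excess $\delta^{1+\alpha}$ for small $\delta$ and permits $c$ to be chosen as a universal constant.
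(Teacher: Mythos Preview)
Your argument is correct and runs parallel to the paper's, but the core excursion estimate is genuinely different. The paper proceeds by an explicit decomposition $\mathscr{V}_r^1,\mathscr{V}_r^2,\mathscr{V}_r^3,\mathscr{V}_r^4$; the decisive case $\mathscr{V}_r^3$ is handled via a \emph{one-way} estimate on the stretch from the last exit of $B_\delta(a)$ to the first hit of $\partial B_{\delta'}(a)$, using only the constant bound $W\ge\frac12 c_W^2\delta^2$ and Cauchy--Schwarz on the kinetic term to obtain
\[
J\ge \frac{(\delta'-\delta)^2}{2T}+\frac12 c_W^2\delta^2 T\ge c_W\delta(\delta'-\delta),
\]
which with $\delta'=c\delta^\alpha$ gives an excess exactly of order $\delta^{1+\alpha}$ and therefore forces the specific choice $c=1+\frac{1}{c_WN}+\frac{C_W}{c_W}$. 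You use instead the sharper local bound $W(z)\ge\frac12 c_W^2|z-a|^2$ on $B_{\delta_W}(a)$ together with the total-variation trick to get the \emph{round-trip} bound $J\ge c_W(\rho_{\max}^2-\delta^2)$; since $2\alpha<1+\alpha$, this produces an excess of the smaller order $\delta^{2\alpha}$, which beats the available slack for \emph{any} fixed $c>0$ once $\delta$ is small. Your route is therefore a bit more elementary and makes $c$ universal and unconstrained; the paper's route yields an explicit $c$ and avoids the separate treatment of excursions leaving $B_{\delta_W}(a)$. The remaining pieces of your proof---the $|\Lambda_{r,\delta}|$ bound forcing entry into some $B_\delta$, the ``exactly one transit'' count via Lemma~\ref{lower-sigma}, and the length bound on the transition via $|\Lambda_{r,\delta}|/N$---cover precisely the paper's cases $\mathscr{V}_r^1,\mathscr{V}_r^2,\mathscr{V}_r^4$ and are sound. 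One cosmetic remark: the parenthetical ``or $B_\delta(\omega^{-1}a)$'' is superfluous, since the equivariance $v(s+\frac{2\pi r}{N})=\omega v(s)$ already forces the single transit to land in $B_\delta(\omega a)$.
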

 \begin{proof}
  We divide $\mathscr{V}_r\setminus\mathscr{V}_r^*$ in four parts $\mathscr{V}_r^i$, $i=1,2,3,4$ which are defined in sequence and satisfy
 \[\begin{split}
 &\mathscr{V}_r=\mathscr{V}_r^{0,c};\;\;\;\mathscr{V}_r^{4,c}=\mathscr{V}_r^*,\\
 &\mathscr{V}_r^{i,c}=\mathscr{V}_r^{i+1}\cup\mathscr{V}_r^{i+1,c},\;\;n=0,1,2,3,\\
 &\mathscr{V}_r^{i+1,c}=\mathscr{V}_r^{i,c}\setminus\mathscr{V}_r^{i+1}.
 \end{split}\]
1. We define
\[
\mathscr{V}_r^1=\{v\in\mathscr{V}_r:v(s)\not\in\cup_{j=1}^N B_\delta(\omega^{j-1}a),\;s\in[0,2\pi r)\}.\]
Therefore $v\in\mathscr{V}_r^{1,c}$ if and only if there are $s\in[0,2\pi r)$ and $1\leq j\leq N$ such that
$v(s)\in B_\delta(\omega^{j-1}a)$. This and equivariance imply
\[\mathscr{V}_r^{1,c}=\{v\in\mathscr{V}_r:v(0)\in\overline{B_\delta}(a)\},\]
where, as remarked before, we actually mean the set of maps that, after a suitable translation, satisfy $v(0)\in B_\delta(a)$.

For $v\in\mathscr{V}_r^1$ from \eqref{W} we have
\begin{equation}
J_r(v)\geq\int_0^{2\pi r}W(v)ds\geq\pi c_W^2\delta^2 r,\;\;v\in\mathscr{V}_r^1.
\label{E-Vsharp}
\end{equation}
2. We define

\[\mathscr{V}_r^2=\{v: v(0)\in\overline{B_\delta}(a),\;v(\bar{s})\in\overline{B_\delta}(\omega^{j-1}a), \text{for some}\,\bar{s}\in(0,\frac{2\pi r}{N}),\text{and some}\,3\leq j\leq N\}.\]
If $v\in\mathscr{V}_r^2$  equivariance and Lemma \ref{lower-sigma} imply
\begin{equation}
J_r(v)>2N(\sigma-C_W\delta^2).
\label{E-Vsharp2}
\end{equation}
Note that
\begin{equation}
\begin{split}
&\mathscr{V}_r^{2,c}=\{v\in\mathscr{V}_r:v(0)\in\overline{B_\delta}(a)),\;v(\frac{2\pi r}{N})=\omega v(0)\in \overline{B_\delta}(\omega a)\,\, \text{and}\\
&\, v(s)\not\in\overline{B_\delta}(\omega^{j-1} a),\;3\leq j\leq N, s\in(0,\frac{2\pi r}{N})\}
\end{split}
\label{Vr2c}
\end{equation}

3.
Let $\delta^\prime\in(\delta,\frac{1}{2}\vert\omega a-a\vert)$ a number to be chosen later. Observe that for each $v\in\mathscr{V}_r^{2,c}$, for the same translation of $v$ considered in \eqref{Vr2c}, we can define
\begin{equation}
\begin{split}
&s_v^-=\inf\{s\in(0,\frac{2\pi r}{N}):\vert v(s)-a\vert=\delta^\prime\},\\
&s_v^+=\sup\{s\in(0,\frac{2\pi r}{N}):\vert v(s)-\omega a\vert=\delta^\prime\}.
\end{split}
\label{s+s-}
\end{equation}
We set
\[\mathscr{V}_r^3=\{v\in\mathscr{V}_r^{2,c}: v(\bar{s})\in\overline{B_\delta}(a)\cup\overline{B_\delta}(\omega a),\,\text{for some}\,\bar{s}\in(s_v^-,s_v^+)\}\]
To derive a lower bound for the energy of $v\in\mathscr{V}_r^3$ we suppose (the case $v(\bar{s})\in\overline{B_\delta}(\omega a)$ is similar) that
\begin{equation}
v(\bar{s})\in\overline{B_\delta}(a)
\label{vinBa}
\end{equation} and define
\[\tilde{s}=\inf\{s<s_v^-:v(t)\not\in\overline{B_\delta}(a), t\in[s,s_v^-)\}.\]
A standard computation and \eqref{W} shows that
\[\begin{split}
&\int_{\tilde{s}}^{s_v^-}(\frac{\vert\dot{v}\vert^2}{2}+W(v))ds\geq\frac{\vert v(s_v^-)-v(\tilde{s})\vert^2}{2(s_v^--\tilde{s})}+\frac{1}{2}c_W^2\delta^2(s_v^--\tilde{s})\\
&\geq\frac{(\delta^\prime-\delta)^2}{2(s_v^--\tilde{s})}+\frac{1}{2}c_W^2\delta^2(s_v^--\tilde{s})\geq c_W\delta(\delta^\prime-\delta).
\end{split}\]
This \eqref{vinBa}, $v(\frac{2\pi r}{N})\in\overline{B_\delta}(\omega a)$ and Lemma \ref{lower-sigma} that imply
\[J_r(v,(\bar{s},\frac{2\pi r}{N}))\geq\sigma-C_W\delta^2,\]
yield, provided we choose $\delta^\prime=c\delta^\alpha$ with $c=1+\frac{1}{c_W N}+\frac{C_W}{c_W}$,
\begin{equation}
J_r(v)\geq N(\sigma-C_W\delta^2+c_W\delta(\delta^\prime-\delta))
\geq N\sigma+\delta^{1+\alpha},
\label{E-Vsharp3}
\end{equation}

4.
Observe that
\[\begin{split}
&\mathscr{V}_r^{3,c}=\{v\in\mathscr{V}_r^{2,c}:v(s)\in B_{c\delta^\alpha}(a),\,s\in[0,s_v^-),\,
v(s)\in B_{c\delta^\alpha}(\omega a),\,s\in[s_v^+,\frac{2\pi r}{N}),\\
&v(s)\not\in\overline{B_\delta}(\omega^{j-1}a),\,s\in[s_v^-,s_v^+],\,1\leq j\leq N\},
\end{split}\]
and define
\[\mathscr{V}_r^4=\{v\in\mathscr{V}_r^{3,c}:s_v^+-s_v^-\geq\frac{4\sigma}{c_W^2\delta^2}\}.\]
Then \eqref{outW} implies
\begin{equation}
J_r(v)\geq\frac{N}{2}c_W^2({s_v^+}-{s_v^-})\delta^2=2N\sigma.
\label{Jr4}
\end{equation}

5. If $r\geq r_\delta=\frac{N\sigma+\delta^{1+\alpha}}{\pi c_W^2\delta^2}$ and $v\in\mathscr{V}_r^1$,  \eqref{E-Vsharp} yields \begin{equation}
J_r(v)\geq N\sigma+\delta^{1+\alpha}.
\label{thesis}
\end{equation}
Since $\delta$ is a small number  \eqref{E-Vsharp2} and \eqref{Jr4} imply that \eqref{thesis} is satisfied also for $v\in \mathscr{V}_r^2\cup\mathscr{V}_r^4$. From this and \eqref{E-Vsharp3} we conclude that \eqref{thesis} holds for $v\in\mathscr{V}_r\setminus\mathscr{V}_r^{4,c}$. To complete the proof it remain to show that  $\mathscr{V}_r^{4,c}$ is a subset of $\mathscr{V}_r^*$. By inspecting the expressions of $\mathscr{V}_r^{j,c}$, $j=1,2,3$ and the definition of $\mathscr{V}_r^4$ we find that, after a suitable translation,  $v\in\mathscr{V}_r^{4,c}\subset\mathscr{V}_r$ satisfies
\begin{equation}
\begin{split}
&v(s)\in B_{c\delta^\alpha}(a),\;s\in[0,s_v^-),\quad v(s)\in B_{c\delta^\alpha}(\omega a),\;s\in(s_v^+,\frac{2\pi r}{N}],\\
&\;s_v^+-s_v^-<\frac{4\sigma}{c_W^2\delta^2}.
\end{split}
\label{befor-tran}
\end{equation}
By equivariance we have
\[v(s)\in B_{c\delta^\alpha}(\omega a),\;s\in(s_v^+,\frac{2\pi r}{N}]\;\Leftrightarrow\;
v(s)\in B_{c\delta^\alpha}(a),\;s\in(s_v^+-\frac{2\pi r}{N},0]\]
That together with \eqref{befor-tran}imply
\[v(s)\in B_{c\delta^\alpha}(a),\;s\in(s_v^+-\frac{2\pi r}{N},s_v^-).\]
The translation $s\rightarrow s+s_0$, $s_0=\frac{2\pi r}{N}-s_v^+$ transforms this equation into \eqref{V*} with $s_{v,\delta}=s_v^+-s_v^-$.  The proof is complete.
\end{proof}

Next we show that the measure of the set of the fibers where the profile of a minimizer $u^R$ is not in $\mathscr{V}_r^*$ is bounded independently of $R$
\begin{lemma}
\label{Sigma} Let $\delta\in(0,\delta_W]$, $\alpha\in(0,1)$, $r_\delta$ and $\mathscr{V}_r^*$ as before.
Set $\Sigma=\{r\in[r_\delta,R):u^R(x(r,\frac{\cdot}{r}))\in\mathscr{V}_r\setminus\mathscr{V}_r^*\}$. Then
\[\vert\Sigma\vert\leq\frac{C_2}{\delta^{1+\alpha}},\]
\end{lemma}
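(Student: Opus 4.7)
The strategy is to combine three ingredients already available: the decomposition of $J_{B_R}(u^R)$ into a radial integral over one-dimensional energies on fibers, the sharp per-fiber lower bound of Lemma \ref{ur-exp}, and the uniform energy gap of Proposition \ref{away} on fibers that fail to lie in $\mathscr{V}_r^*$. The upper bound \eqref{UB} is then just enough to convert the cumulative energy excess into a bound on $|\Sigma|$.

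First I would pass to polar coordinates and drop the radial kinetic term, exactly as in the derivation of \eqref{LB}, to obtain
\[
J_{B_R}(u^R)\;\ge\;\int_0^R J_r\bigl(u^R(r,\tfrac{\cdot}{r})\bigr)\,dr.
\]
Since $u^R$ is $C_N$-equivariant, $u^R(r,\tfrac{\cdot}{r})\in\mathscr{V}_r$ for every $r$, so $J_r(u^R(r,\tfrac{\cdot}{r}))\ge J_r(u_r)$ whenever $r\ge\bar r$. Taking $\delta$ small enough so that $r_\delta\ge\max\{\bar r,r_{\bar\delta}\}$ (which is automatic since $r_\delta\sim 1/\delta^2$), Lemma \ref{ur-exp} gives
\[
J_r\bigl(u^R(r,\tfrac{\cdot}{r})\bigr)\;\ge\;N\sigma-\bar K e^{-\bar k r}\qquad\text{for every }r\ge r_\delta.
\]

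Next I would split $[r_\delta,R)$ into $\Sigma$ and its complement. On the complement, the fiber profile belongs to $\mathscr{V}_r^*$ and the above bound applies. On $\Sigma$, Proposition \ref{away} supplies the strictly larger lower bound
\[
J_r\bigl(u^R(r,\tfrac{\cdot}{r})\bigr)\;\ge\;N\sigma+\delta^{1+\alpha}.
\]
Putting the two bounds together,
\begin{align*}
J_{B_R}(u^R)
&\;\ge\;\int_{[r_\delta,R)\setminus\Sigma}\!\bigl(N\sigma-\bar K e^{-\bar k r}\bigr)\,dr
\;+\;\int_{\Sigma}\bigl(N\sigma+\delta^{1+\alpha}\bigr)\,dr\\
&\;\ge\;N\sigma(R-r_\delta)\;-\;\frac{\bar K}{\bar k}e^{-\bar k r_\delta}\;+\;\delta^{1+\alpha}|\Sigma|.
\end{align*}

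Finally I would invoke the upper bound \eqref{UB}, $J_{B_R}(u^R)\le N\sigma R+C_1$, to cancel the linear-in-$R$ leading term and isolate $|\Sigma|$:
\[
\delta^{1+\alpha}|\Sigma|\;\le\;C_1+N\sigma r_\delta+\frac{\bar K}{\bar k},
\]
yielding $|\Sigma|\le C_2/\delta^{1+\alpha}$ with $C_2:=C_1+N\sigma r_\delta+\bar K/\bar k$ independent of $R$, as claimed. The only subtlety is the bookkeeping around the threshold $r_\delta$ and making sure $u_r$ exists and Lemma \ref{ur-exp} applies on the good fibers; I expect no analytical obstacle beyond this routine check.
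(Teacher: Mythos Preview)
Your overall strategy is exactly the paper's, but there is a genuine bookkeeping slip that spoils the conclusion. Your constant
\[
C_2:=C_1+N\sigma r_\delta+\bar K/\bar k
\]
depends on $\delta$ through $r_\delta$, and since $r_\delta\sim N\sigma/(\pi c_W^2\delta^2)$ this actually yields only $|\Sigma|\lesssim \delta^{-(3+\alpha)}$, not $\delta^{-(1+\alpha)}$. The lemma (and its later use in Theorem~\ref{uR<}, where one needs $|\Sigma|\le C\delta^{-(1+\alpha')}$ with $\alpha'<1$) requires $C_2$ to be independent of $\delta$ as well as of $R$.

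The fix is the one the paper uses: do not throw away the interval $(0,r_\delta)$. You already observed that Lemma~\ref{ur-exp} gives $J_r(u^R(r,\tfrac{\cdot}{r}))\ge J_r(u_r)\ge N\sigma-\bar K e^{-\bar k r}$ for all $r$ above a \emph{fixed} threshold $r_0$ (independent of $\delta$). Apply this bound also on $(r_0,r_\delta)$, so that the three pieces are
\[
\int_{r_0}^{r_\delta}\!+\!\int_{(r_\delta,R)\setminus\Sigma}\!+\!\int_\Sigma
\;\ge\; N\sigma(R-r_0)-C_0+\delta^{1+\alpha}|\Sigma|,
\]
and then \eqref{UB} gives $\delta^{1+\alpha}|\Sigma|\le C_1+N\sigma r_0+C_0$, with $r_0$ fixed. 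This is precisely the paper's argument and yields the $\delta$-independent $C_2$ you need.
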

where $C_2=2C_0+C_1+N\sigma r_0$.
\begin{proof}
From \eqref{UB} we have
\begin{equation}
\begin{split}
&\int_{(r_0,R)}J_r(u^R(x(r,\frac{\cdot}{r})))dr
=\int_{(r_0,r_\delta)}J_r(u^R(x(r,\frac{\cdot}{r})))dr\\
&+\int_{(r_\delta,R)\setminus\Sigma}J_r(u^R(x(r,\frac{\cdot}{r})))dr+\int_\Sigma J_r(u^R(x(r,\frac{\cdot}{r})))dr\leq N\sigma R+C_1.
\end{split}
\label{UBN}
\end{equation}
From \eqref{LB} and Proposition \ref{away} we have
\[\begin{split}
&\int_{(r_0,r_\delta)}J_r(u^R(x(r,\frac{\cdot}{r})))dr\leq N\sigma(r_\delta-r_0)-C_0,\\
&\int_{(r_\delta,R)\setminus\Sigma}J_r(u^R(x(r,\frac{\cdot}{r})))dr
\geq N\sigma (R-r_\delta-\vert\Sigma\vert)-C_0
,\\
&\int_{\Sigma}J_r(u^R(x(r,\frac{\cdot}{r})))dr
\geq (N\sigma +\delta^{1+\alpha})\vert\Sigma\vert.
\end{split}\]
This and \eqref{UBN} conclude the proof.
\end{proof}

We set
\begin{equation}
\tilde{\Sigma}=\{x\in\R^2: r(x)\in\Sigma\},
\label{tildeSigma}
\end{equation}
where we have used the notation  $(r(x),\theta(x))$ for the radial coordinates of $x$.

\section{A pointwise estimate for $u^R$}\label{pointwise}
Let $u^R:B_R\rightarrow\R^2$ be a minimizer of \eqref{energy}. Then the smoothness of $W$ and elliptic theory imply
that we can assume
\begin{equation}
\|u^R\|_{L^\infty}+\|\nabla u^R\|_{L^\infty}\leq M,
\label{uR-bounds}
\end{equation}
for some $M>0$ independent of $R$.

Observe that the properties of $W$ imply
\begin{lemma}
\label{W-mon}
Given $\delta^*\in(0,\delta_W]$ there is $\delta_0>0$ such that,
 for each $n\in\SF^1$, $\delta\in(0,\delta_0]$ and $d>\delta$ that satisfy
 \[a+d n\in B_M\setminus\cup_{j=2}^{N}B_{\delta^*}(\omega^{j-1}a),\]
 we have
\begin{equation}
W(a+\delta n)<W(a+d n).
\label{W-}
\end{equation}
\end{lemma}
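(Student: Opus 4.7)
The plan is to split the inequality $W(a+\delta n) < W(a+dn)$ into two regimes according to the size of $d$: a ``near'' regime $d \leq r_1$, where the positive definiteness of $W_{uu}(a)$ forces $W$ to be strictly radially increasing along short rays from $a$, and a ``far'' regime $d > r_1$, where $W(a+dn)$ is uniformly bounded below by the minimum of $W$ over a compact set that excludes all the zeros, while $W(a+\delta n)$ is made small by the quadratic bound of Lemma \ref{W-lemma}.

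For the near regime, let $\lambda := \min_{|n|=1} n^\top W_{uu}(a)\,n > 0$, which is positive by $\mathbf{H}_2$. Since $W(a)=0$ and $W_u(a)=0$, Taylor expansion gives, uniformly in $n \in \SF^1$,
\[
\frac{d}{dt}W(a+tn) = t\, n^\top W_{uu}(a)\,n + O(t^2) \geq \tfrac{\lambda}{2}\,t > 0 \qquad \text{for } t\in (0,r_1],
\]
provided $r_1>0$ is chosen small enough. I would further shrink $r_1$ so that $\overline{B_{r_1}(a)}$ is disjoint from $\bigcup_{j=2}^N \overline{B_{\delta^*}(\omega^{j-1}a)}$. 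Then $t\mapsto W(a+tn)$ is strictly increasing on $[0,r_1]$, so any $0<\delta<d\leq r_1$ yields $W(a+\delta n)<W(a+dn)$.

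For the far regime, the set
\[
K := \overline{B_M} \setminus \Bigl(B_{r_1}(a) \cup \bigcup_{j=2}^{N} B_{\delta^*}(\omega^{j-1}a)\Bigr)
\]
is compact and contains no zero of $W$ by $\mathbf{H}_2$, so $m := \min_K W > 0$. Now set $\delta_0 := \min\{r_1,\ \delta_W,\ \sqrt{m}/C_W\}$. For any $\delta \in (0,\delta_0]$ and $n\in\SF^1$, Lemma \ref{W-lemma} gives $W(a+\delta n) \leq \tfrac12 C_W^2\delta^2 \leq \tfrac12 m < m$. Given $d > \delta$ with $a+dn \in B_M \setminus \bigcup_{j\geq 2} B_{\delta^*}(\omega^{j-1}a)$: if $d\leq r_1$ the near regime applies directly, while if $d>r_1$ then $a+dn \in K$, whence $W(a+dn) \geq m > W(a+\delta n)$. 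Either way \eqref{W-} holds.

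The argument is a routine combination of Taylor expansion at the nondegenerate minimum and compactness on its complement; the only delicate point is choosing the thresholds $r_1$, $m$, and $\delta_0$ consistently, so that $K$ is genuinely separated from every zero of $W$ and the quadratic upper bound $\tfrac12 C_W^2\delta^2$ remains strictly below $m$.
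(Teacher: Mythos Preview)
Your proof is correct. The paper does not actually supply a proof of this lemma, stating only that ``the properties of $W$ imply'' it; your two-regime argument (radial monotonicity near $a$ from the positive definite Hessian, compactness away from all zeros to get a uniform positive lower bound) is precisely the routine verification the paper leaves to the reader.
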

\noindent Next we present a result which is essential for our analysis and may be interesting in itself.

If $v:B_R\rightarrow\R^2$ is a $H^1$ map we use the polar form of $v$:
\begin{equation}
\begin{split}
&v=a+q^vn^v,\;\;\text{ on }\;\{v\neq a\},\\
&q^v=\vert v-a\vert,\,n^v=\frac{v-a}{\vert v-a\vert}\in\SF^1,
\end{split}
\label{polar}
\end{equation}
and the identity
\begin{equation}
\vert\nabla v\vert^2=\vert\nabla q^v\vert^2+({q^v})^2\vert\nabla n^v\vert^2.
\label{polar-d}
\end{equation}
We now choose the number $\delta\in(0,\delta_W]$ introduced in Lemma \ref{ur} in such a way that the inequality \eqref{W-} can be applied. We fix a number $\delta^*\in(0,\delta_W]$ and assume $\delta\in(0,\delta_0]$, $\delta_0$ as in Lemma \ref{W-mon}.

\begin{theorem}\label{uR<}
Assume that a minimizer $u^R:B_R\rightarrow\R^2$ of \eqref{energy}, a ball $B_l(x_0)\subset B_R$ and a set $\Sigma\subset(0,R)$  satisfy, for some constants $c>0$ and $C>0$,

\begin{equation}
\vert u^R(x)-a\vert\leq c\delta^\alpha,\;\;x\in B_l(x_0)\setminus\tilde{\Sigma},\;\;\text{for some}\:\alpha\in(0,1)
\label{Cond}
\end{equation}
where $\tilde{\Sigma}=\{x\in B_R:\vert x\vert\in\Sigma\}$, and
\begin{equation}
\vert\Sigma\vert\leq\frac{C}{\delta^{1+\alpha^\prime}},\;\;\text{for some}\;\alpha^\prime\in(0,1).
\label{SigmaSize}
\end{equation}

Assume that
\[2\alpha+\alpha^\prime<1.\]
 Then there is a constant $D>0$ such that for each $\delta\leq D^\frac{1}{1-(2\alpha+\alpha^\prime)}$ there exist $l_\delta>0$ and $r_\delta>0$ which are independent of $R$ and such that
\begin{equation}
\begin{split}
&l\geq l_\delta,\\
&r(x_0)\geq r_\delta,
\end{split}
\label{l-r-cond}
\end{equation}
imply
\begin{equation}
\vert u^R(x_0)-a\vert\leq 2\delta.
\label{onSigma}
\end{equation}
\end{theorem}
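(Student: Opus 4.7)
The plan is to argue by contradiction: assume $|u^R(x_0)-a| > 2\delta$ and build an $H^1$ competitor $\bar u$ that coincides with $u^R$ outside a ball $B_\ell(x_0)\subset B_l(x_0)$ and strictly lowers the energy, contradicting the minimality of $u^R$. The competitor is constructed from the polar representation $u^R = a + q^{u^R}\,n^{u^R}$ of \eqref{polar} by capping $q^{u^R}$. Two basic observations drive the construction: by \eqref{polar-d}, replacing $q^{u^R}$ by $\min(q^{u^R},\tau)$ for a threshold $\tau>0$ never raises the Dirichlet energy; and by Lemma \ref{W-mon}, it strictly lowers $W$ at every point where $u^R$ avoids the ``forbidden'' balls $\bigcup_{j\ge 2} B_{\delta^*}(\omega^{j-1}a)$. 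Under the contradiction hypothesis and the Lipschitz bound \eqref{uR-bounds}, there is a ball of universal size around $x_0$ on which $q^{u^R}>2\delta$; choosing $\tau$ of order $\delta$, this bulk region yields a definite $W$-saving of order $\delta^2$ per unit area by \eqref{W}.

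Two technical obstacles prevent a naive constant cap from working. First, the cap must match $u^R$ on $\partial B_\ell(x_0)$, whereas \eqref{Cond} only provides $q^{u^R}\le c\delta^\alpha\gg\tau$ on $\partial B_\ell(x_0)\setminus\tilde\Sigma$ and nothing at all on $\partial B_\ell(x_0)\cap\tilde\Sigma$. I would use a radially graded cap $\tau(x)$ equal to $\sim\delta$ near $x_0$ and smoothly growing to $c\delta^\alpha$ near $\partial B_\ell(x_0)$, and I would select $\ell$ in a suitable subinterval by a coarea argument applied to $x\mapsto |x-x_0|$, so that $\mathcal{H}^1(\partial B_\ell(x_0)\cap\tilde\Sigma)$ is small (comparable with $|\Sigma|$ via \eqref{SigmaSize}); a local $H^1$ patch of cost controlled by \eqref{uR-bounds} then glues $\bar u$ to $u^R$ on the exceptional arc. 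Second, on the ``forbidden'' set $F=\{x\in B_\ell(x_0):\,u^R(x)\in B_{\delta^*}(\omega^{j-1}a)\text{ for some }j\ge 2\}$, Lemma \ref{W-mon} fails and capping might increase $W$; but \eqref{Cond} forces $F\subset\tilde\Sigma\cap B_\ell(x_0)$, whose area is at most $C l/\delta^{1+\alpha'}$ for $r(x_0)$ large, so I would keep $\bar u = u^R$ on a neighbourhood of $F$ and charge the change-over layer against that small area via \eqref{uR-bounds}. Lemma \ref{basic}, applied to a family of radial rays transverse to the annular strips carrying $\tilde\Sigma$, further guarantees that any heteroclinic transition already performed by $u^R$ between $F$ and the neighbourhood of $a$ carries a cost $\sigma-C_W(\delta^*)^2$ that is stored in $J(u^R)$ itself and is not forfeited in the competitor.

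The hard part, and the source of the hypothesis $2\alpha+\alpha'<1$, is the scaling balance between saving and overhead. The $W$-saving from the interior cap is at least of order $\delta^2$ per unit area by \eqref{W}, hence grows at least quadratically in $l_\delta$; the accumulated overhead from the boundary patch, from the gradient of the graded cap $\tau(x)$, and from the exclusion of $F$ is linear in $l_\delta$ with a coefficient of order $\delta^{-1-\alpha'}$, together with subleading terms of order $\delta^{2\alpha}$ coming from the graded cap. Choosing $l_\delta$ as an appropriate negative power of $\delta$ makes the quadratic-in-$l_\delta$ saving dominate the linear-in-$l_\delta$ overhead precisely when $2\alpha+\alpha'<1$ and $\delta\le D^{1/(1-2\alpha-\alpha')}$ for a universal constant $D$. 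The requirement $r(x_0)\ge r_\delta$ serves to ensure that $\tilde\Sigma\cap B_l(x_0)$ essentially decomposes into disjoint, nearly-parallel radial strips of controlled width, which is what makes the coarea selection of $\ell$, the area estimate on $F$, and the transversality hypothesis of Lemma \ref{basic} all simultaneously valid.
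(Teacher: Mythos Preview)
Your overall framework---cap $q^{u^R}$ via the polar decomposition and compare energies---matches the paper's, but the energy balance you describe does not close. The central gap is the claim that the $W$-saving ``grows at least quadratically in $l_\delta$.'' Capping changes nothing where $q^{u^R}\le\tau$, so the saving is proportional to $\bigl|\{q^{u^R}>\tau\}\cap B_\ell\bigr|$, and the \emph{only} a priori lower bound on that set is the tiny ball $B_{\delta/M}(x_0)$ from the contradiction hypothesis and \eqref{uR-bounds}; hypothesis \eqref{Cond} is an upper bound on $q^{u^R}$, not a lower one. Since your overhead genuinely does scale with $l_\delta$, a single comparison cannot yield a contradiction. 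The paper resolves this with an \emph{iteration} over nested balls $B_{l_h}$: a first lemma bounds the cost of capping to $q\le\delta$ on $B_{l_h}$ by $K\bigl|\{q^{u^R}>\delta\}\cap(B_{l_h+\eta}\setminus B_{l_h-\eta})\bigr|$, a second (Lemma~\ref{lemma53}, from \cite{afs}) extracts from the capped map a competitor with gain $\ge k\bigl|\{q^{u^R}>\delta\}\cap B_{l_h-\eta}\bigr|$, and minimality forces $\mu_h=\bigl|\{q^{u^R}>\delta\}\cap B_{l_h+\eta}\bigr|$ to grow geometrically in $h$ while $\mu_h-\mu_{h-1}\le|B_{l_h+\eta}\setminus B_{l_h-\eta}|$ grows linearly---a contradiction at some $h_m=h_m(\delta)$, giving $l_\delta=l_{h_m}$.

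Your treatment of the forbidden set $F$ also misses the mechanism behind $2\alpha+\alpha'<1$. Keeping $\bar u=u^R$ near $F$ forfeits nothing but gains nothing either; you are then left bounding a matching-layer cost that you do not actually control. The paper instead \emph{caps through} $F$: it thickens $F$ along radial rays to a set $U\subset\tilde\Sigma$ reaching out to $\{q^{u^R}=2c\delta^\alpha\}$, so that along each such ray $u^R$ performs a transition of cost $\ge\sigma-C_W{\delta^*}^2$ (Lemma~\ref{lower-sigma}), giving $J_U(u^R)\gtrsim r(x_0)\,|\Theta|\,\sigma$. After capping, $q^{\tilde u}=\delta$ on $U$ and the bound $q^{u^R}\ge 2c\delta^\alpha$ on $U$ controls $|\nabla n^{u^R}|$, so the energy density of $\tilde u$ is $\lesssim\delta^{2(1-\alpha)}$; combined with $|U|\lesssim r(x_0)\,|\Theta|\,|\Sigma|\lesssim r(x_0)\,|\Theta|\,\delta^{-1-\alpha'}$ this gives $J_U(\tilde u)\lesssim r(x_0)\,|\Theta|\,\delta^{1-(2\alpha+\alpha')}$. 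The hypothesis $2\alpha+\alpha'<1$ (and $r(x_0)\ge r_\delta$, to make $(r(x_0)-l')/(r(x_0)+l')$ bounded below) is exactly what makes $J_U(\tilde u)-J_U(u^R)\le 0$, so that $U$ contributes \emph{no} overhead to the capping lemma; the rest of $\{q^{u^R}>\delta\}$ is then handled by Lemma~\ref{W-mon} as you suggest.
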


Before presenting the proof some comments are in order. The point of the Theorem is that, if \eqref{Cond} holds for sufficiently large $r(x_0)$ and $l$, then, in spite of the fact that nothing is a priori known on the behavior of
$u^R$ on $\tilde{\Sigma}$, the inequality \eqref{onSigma} is satisfied also when $x_0\in\tilde{\Sigma}$, that is when the center of the ball is in $\tilde{\Sigma}$ . Now a comment on the proof. Suppose we knew that
\begin{equation}
\min_{j=2}^N\vert u^R(x)-\omega^{j-1}a\vert\geq\delta^*,\;\;x\in\tilde{\Sigma}\cap B_l(x_0).
\label{Cond*}
\end{equation}
Then we could directly invoke Theorem 5.3 in \cite{afs} or Theorem 1.2 in \cite{f0} and conclude \eqref{onSigma} provided $l>0$ is sufficiently large. But we can not assume \eqref{Cond*} since we can not exclude that, at some point $\bar{x}\in\tilde{\Sigma}\cap B_l(x_0)$ it results
$u^R(\bar{x})=\omega^{j-1}a$ for some $j>1$. In spite of this we will see that the bound on the size of $\Sigma$ in \eqref{SigmaSize} together with \eqref{Cond} allow to overcome this difficulty and to extend the proof of the quoted theorems in order to cover the case at hand.

\begin{proof}
In this proof, to simplify the notation, we simply write $B_l, B_{l\pm\eta}$ instead of $B_l(x_0), B_{l\pm\eta}(x_0)$. By
inspecting the proof of Theorem 5.3 given in Section 5.5 of \cite{afs} we see that the following result, analogous of Lemma 5.3 in \cite{afs}, can be established by exactly the same arguments used in the proof of that lemma.
\begin{lemma}
\label{lemma53}
Let $\tilde{u}:B_R\rightarrow\R^2$ a $C^{0,1}$, $C_N$-equivariant map (not necessarily a minimizer) that satisfies \eqref{uR-bounds}.

Given $l_0>0$, $\eta>0$, assume that, for some $l\geq l_0+\eta$,
\[\vert\tilde{u}-a\vert\leq\delta,\;\;\text{ on }\;B_l(x_0),\]
where $B_l(x_0)\subset B_R$ satisfies $B_l(x_0)\cap B_l(\omega x_0)=\emptyset$.
Then there exists a $C^{0,1}$, $C_N$-equivariant map $v:B_R\rightarrow\R^2$ that coincides with $\tilde{u}$ on $B_R\setminus\cup_{j=1}^N B_l(\omega^{j-1}x_0)$ and satisfies
\begin{equation}
J_{B_l(x_0)}(\tilde{u})-J_{B_l(x_0)}(v)\geq k\vert B_{l-\eta}(x_0)\cap\{q^{\tilde{u}}=\delta\}\vert,
\label{energy+}
\end{equation}
where $k=k(W,l_0,\eta,\delta)$ is a constant that does not depend on $l>l_0+\eta$ and $R$.
 \end{lemma}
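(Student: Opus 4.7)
The plan is to build a $C_N$-equivariant comparison map $v$ by truncating the radial component of the polar decomposition $\tilde u=a+q^{\tilde u}n^{\tilde u}$ at the level $\delta$ inside $B_{l-\eta}(x_0)$, smoothly reconnecting to $\tilde u$ in the annular shell $B_l(x_0)\setminus B_{l-\eta}(x_0)$, propagating by equivariance, and exploiting Lemma~\ref{W-mon} together with the polar identity \eqref{polar-d} to turn the truncation into a quantitative energy gain.

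I would fix a smooth radial cut-off $\varphi:[0,l]\to[0,1]$ with $\varphi\equiv 0$ on $[0,l-\eta]$, $\varphi(l)=1$, $|\varphi'|\leq C/\eta$, and set
\[\hat q(x)=\bigl(1-\varphi(|x-x_0|)\bigr)\min(q^{\tilde u}(x),\delta)+\varphi(|x-x_0|)\,q^{\tilde u}(x),\qquad v=a+\hat q\,n^{\tilde u}\text{ on }B_l(x_0),\]
with $v=\tilde u$ off $\bigcup_{j=1}^N B_l(\omega^{j-1}x_0)$, propagated to the other $N-1$ balls by $C_N$-equivariance (an unambiguous operation since the orbit of $B_l(x_0)$ under $C_N$ is disjoint by hypothesis and the cyclic arrangement). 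The resulting $v$ is $C^{0,1}$ and $C_N$-equivariant and matches $\tilde u$ on $\partial B_l(\omega^{j-1}x_0)$.

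For the energy comparison on $B_l(x_0)$ I would use \eqref{polar-d} and the fact that $v$ and $\tilde u$ share the angular field $n^{\tilde u}$. On $B_{l-\eta}(x_0)\cap\{q^{\tilde u}\leq\delta\}$ we have $v=\tilde u$; on $B_{l-\eta}(x_0)\cap\{q^{\tilde u}>\delta\}$ we have $\hat q\equiv\delta$, $\nabla\hat q\equiv 0$, and the integrand difference equals
\[\tfrac12|\nabla q^{\tilde u}|^2+\tfrac12\bigl((q^{\tilde u})^2-\delta^2\bigr)|\nabla n^{\tilde u}|^2+W(a+q^{\tilde u}n^{\tilde u})-W(a+\delta n^{\tilde u})\geq 0,\]
the potential term being strictly positive by Lemma~\ref{W-mon} (the $L^\infty$ control \eqref{uR-bounds} keeps $\tilde u$ in $B_M$). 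The nondegeneracy of $W_{uu}(a)$ promotes this to a uniform per-area lower bound $k_0(W,\delta)>0$ on $\{q^{\tilde u}\geq\delta\}$, producing an interior saving of at least $k_0\,|B_{l-\eta}(x_0)\cap\{q^{\tilde u}\geq\delta\}|$. In the annular shell $B_l\setminus B_{l-\eta}$ the interpolation introduces a gradient term of order $|\varphi'|^2(q^{\tilde u}-\delta)^2\lesssim M^2/\eta^2$, whose per-area contribution is bounded by a constant depending only on $W$, $M$ and $\eta$.

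The main obstacle is reconciling this annular cost with the claim that $k=k(W,l_0,\eta,\delta)$ is independent of $l$. The clean argument, as in the proof of Lemma~5.3 of \cite{afs}, proceeds fiber by fiber: along each radial segment $\{x_0+t e:t\in[0,l]\}$ that meets $\{q^{\tilde u}\geq\delta\}$ on a portion of positive length, the interior saving along the inner part of the segment in $B_{l-\eta}$ dominates the interpolation cost on the outer subsegment of length $\eta$, provided $l_0$ is large enough relative to $\eta$; integrating this one-dimensional comparison in polar coordinates gives \eqref{energy+} with a constant $k$ independent of $l\geq l_0+\eta$ and of $R$. The author asserts that this fiberwise bookkeeping transfers \emph{verbatim} from the reference, and I would verify it by following the construction of $\varphi$ and the associated one-dimensional energy inequality in Section~5.5 of \cite{afs}.
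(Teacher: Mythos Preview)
Your construction does not do anything. The hypothesis of the lemma is $|\tilde u-a|\leq\delta$ on $B_l(x_0)$, i.e.\ $q^{\tilde u}\leq\delta$ throughout the ball. Hence $\min(q^{\tilde u},\delta)=q^{\tilde u}$ identically, your $\hat q$ equals $q^{\tilde u}$, and $v=\tilde u$. The set $\{q^{\tilde u}>\delta\}$ on which you claim a strict saving is empty, and the invocation of Lemma~\ref{W-mon} (which compares $W(a+\delta n)$ with $W(a+dn)$ for $d>\delta$) never applies. You have in effect reproduced the truncation step that \emph{defines} $\tilde u$ from $u^R$ (this is Lemma~\ref{5.4} in the paper, the analogue of Lemma~5.4 in \cite{afs}), not the energy-gain step of Lemma~\ref{lemma53}.

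The point of Lemma~\ref{lemma53} (Lemma~5.3 in \cite{afs}) is different: starting from a map already confined to $\overline{B_\delta(a)}$, one must push it \emph{strictly closer} to $a$ in $B_{l-\eta}(x_0)$ and quantify the saving on the plateau $\{q^{\tilde u}=\delta\}$. On that plateau $\nabla q^{\tilde u}=0$ a.e.\ and the energy density is $\tfrac12\delta^2|\nabla n^{\tilde u}|^2+W(a+\delta n^{\tilde u})\geq\tfrac12 c_W^2\delta^2$; replacing $q^{\tilde u}$ there by a strictly smaller value lowers both the angular gradient contribution (via \eqref{polar-d}) and the potential (via \eqref{W}). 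The construction in \cite{afs} produces such a $\hat q<q^{\tilde u}$ in $B_{l-\eta}$, matched back to $q^{\tilde u}$ across the shell, in a way that the shell cost is absorbed and the net gain is at least $k\,|B_{l-\eta}\cap\{q^{\tilde u}=\delta\}|$ with $k$ independent of $l$. Your proposal does not contain this idea; to repair it you must build a competitor with $q^v$ strictly below $\delta$ in the interior, not one that merely caps at $\delta$.
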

 In the following we will assume
 \begin{equation}
 l_0\geq \frac{\delta}{M}.
 \label{l-0}
 \end{equation}
 Next we define a deformation of $u^R$ into a map $\tilde{u}$ that satisfies the assumptions of Lemma \ref{lemma53} and derive a quantitative estimate for the energy $J_{B_{l+\eta}}(\tilde{u})-J_{B_{l+\eta}}(u^R)$ spent in the deformation. As in the proof of Lemma 5.4 \cite{afs}, we set $p^{u^R}(x)=q^{u^R}(x)-(q^{u^R}(x)-\delta)^+$ and define $\tilde{u}=a+q^{\tilde{u}}n^{u^R}$ by, see Figure \ref{q}
 \begin{equation}
 \begin{split}
 &q^{\tilde{u}}=\left\{\begin{array}{l}
 q^{u^R}(x),\;\;\text{ for }\;x\in B_R\setminus\cup_{j=1}^N\omega^{j-1}B_{l+\eta},\\
 p^{u^R}(x)+g(x)(q^{u^R}(x)-\delta)^+,\;\text{ for }\;x\in B_{l+\eta},
 \end{array}\right.\\\\
 &g(x)=\left\{\begin{array}{l}
 0,\;\text{ for }\;x\in B_l,\\
 \frac{\vert x-x_0\vert-l}{\eta},\;\text{ for }\;x\in \overline{B}_{l+\eta}\setminus B_l.
 \end{array}\right.
 \end{split}
 \label{utilde}
 \end{equation}

\begin{figure}
  \begin{center}
\begin{tikzpicture}[scale=.85]
\draw [fill] (0,0) circle [radius=0.035];
\draw [fill] (0,2) circle [radius=0.035];
\draw [fill] (0,-2) circle [radius=0.035];
\draw [fill] (0,-4) circle [radius=0.035];

\draw [thin] (-3,0) -- (3,0);
\draw [dotted] (-3,.7) -- (3,.7);

\draw [thin] (-3,2) -- (3,2);
\draw [dotted] (-3,2.7) -- (3,2.7);

\draw [thin] (-3,-2) -- (3,-2);
\draw [blue] (-3,-1) -- (-2,-2);
\draw [blue] (-2,-2) -- (2,-2);
\draw [blue] ( 2,-2) -- (3,-1);

\draw [thin] (-3,-4) -- (3,-4);
\draw [dotted] (-3,-3.3) -- (3,-3.3);


\draw [thin] (-3,1.8) -- (-3,3.3);
\draw [thin] (3,1.8) -- (3,3.3);

\draw [thin] (-3,-.2) -- (-3,1.3);
\draw [thin] (3,-.2) -- (3,1.3);

\draw [thin] (-3,-2.2) -- (-3,-.7);
\draw [thin] (3,-2.2) -- (3,-.7);

\draw [thin] (-3,-2.7) -- (-3,-4.2);
\draw [thin] (3,-2.7) -- (3,-4.2);


\draw [dotted] (-2,1.9) -- (-2,3.);
\draw [dotted] (2,1.9) -- (2,3.);

\draw [dotted] (-2,-.1) -- (-2,1.);
\draw [dotted] (2,-.1) -- (2,1.);

\draw [dotted] (-2,-2.1) -- (-2,-1.2);
\draw [dotted] (2,-2.1) -- (2,-1.2);

\draw [dotted] (-2,-3) -- (-2,-4.1);
\draw [dotted] (2,-3) -- (2,-4.1);


\draw[blue, thick, domain=-3.3:3.3] plot (\x,{.4*((\x+2.3)*(\x+1.5)*(\x+.5)*(\x-1.5))
/(1+(\x+1)*(\x+1)*(\x+1)*(\x+1)))+2.7});


\draw[blue, thick, domain=-3.3:-3.] plot (\x,{.4*((\x+2.3)*(\x+1.5)*(\x+.5)*(\x-1.5))
/(1+(\x+1)*(\x+1)*(\x+1)*(\x+1)))+.7});

\draw[blue, thick, domain=-2.3:-1.5] plot (\x,{.4*((\x+2.3)*(\x+1.5)*(\x+.5)*(\x-1.5))
/(1+(\x+1)*(\x+1)*(\x+1)*(\x+1)))+.7});

\draw[blue, thick, domain=-.5:1.5] plot (\x,{.4*((\x+2.3)*(\x+1.5)*(\x+.5)*(\x-1.5))
/(1+(\x+1)*(\x+1)*(\x+1)*(\x+1)))+.7});

\draw[blue, thick, domain=3:3.3] plot (\x,{.4*((\x+2.3)*(\x+1.5)*(\x+.5)*(\x-1.5))
/(1+(\x+1)*(\x+1)*(\x+1)*(\x+1)))+.7});

\draw [blue, thick] (-3,.7) -- (-2.3,.7);
\draw [blue, thick] (-1.5,.7) -- (-.5,.7);
\draw [blue, thick] (1.5,.7) -- (3,.7);


\draw[blue, thick, domain=-3.:-2.3] plot (\x,{-.4*((\x+2.)*(\x+2.3)*(\x+1.5)*(\x+.5)*(\x-1.5))
/(1+(\x+1)*(\x+1)*(\x+1)*(\x+1)))-3.3});

\draw[blue, thick, domain=2.:3.] plot (\x,{.4*((\x-2.)*(\x+2.3)*(\x+1.5)*(\x+.5)*(\x-1.5))
/(1+(\x+1)*(\x+1)*(\x+1)*(\x+1)))-3.3});

\draw[blue, thick, domain=-3.3:-3.] plot (\x,{.4*((\x+2.3)*(\x+1.5)*(\x+.5)*(\x-1.5))
/(1+(\x+1)*(\x+1)*(\x+1)*(\x+1)))-3.3});

\draw[blue, thick, domain=-2.3:-1.5] plot (\x,{.4*((\x+2.3)*(\x+1.5)*(\x+.5)*(\x-1.5))
/(1+(\x+1)*(\x+1)*(\x+1)*(\x+1)))-3.3});

\draw[blue, thick, domain=-.5:1.5] plot (\x,{.4*((\x+2.3)*(\x+1.5)*(\x+.5)*(\x-1.5))
/(1+(\x+1)*(\x+1)*(\x+1)*(\x+1)))-3.3});

\draw[blue, thick, domain=3:3.3] plot (\x,{.4*((\x+2.3)*(\x+1.5)*(\x+.5)*(\x-1.5))
/(1+(\x+1)*(\x+1)*(\x+1)*(\x+1)))-3.3});

\draw [blue, thick] (-1.5,-3.3) -- (-.5,-3.3);
\draw [blue, thick] (1.5,-3.3) -- (2,-3.3);

\node [below] at (.1,0) {$x_0$};
\node [below] at (1.1,0) {$l$};
\node [below] at (-1.1,0) {$l$};
\node [below] at (2.5,0) {$\eta$};
\node [below] at (-2.5,0) {$\eta$};
\node [left] at (-3,.5) {$\delta$};
\node [left] at (-3,-1.3) {$1$};

\node [above] at (-2.3,2.7) {$q^{u^R}$};
\node [above] at (-2.3,.7) {$p^{u^R}$};
\node [above] at (-2.4,-1.4) {$g$};
\node [above] at (-2.3,-3.3) {$q^{\tilde{u}}$};
\end{tikzpicture}
\end{center}
\caption{The maps $q^{u^R}$, $p^{u^R}$, $q^{\tilde{u}}$ and $g$.}
\label{q}
\end{figure}
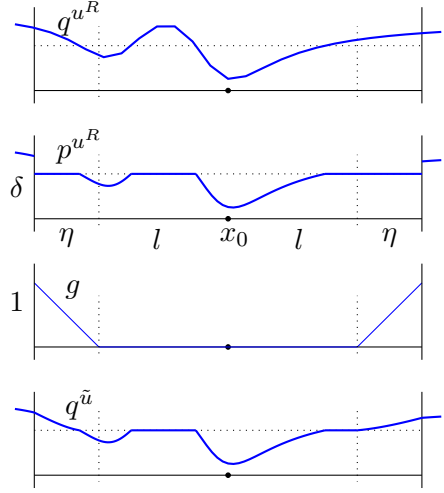

We remark explicitly that in the definition of $\tilde{u}$ we have not changed the direction vector: $n^{\tilde{u}}=n^{u^R}$.

The difficulty now is that, as observed before, we can not assume \eqref{Cond*} and by consequence we can not use Lemma \ref{W-mon} to deduce, as in the proof of Lemma 5.4 \cite{afs}, that $J_{B_l}(\tilde{u})\leq J_{B_l}(u^R)$. To overcome this difficulty we need to treat differently the part of $\tilde{\Sigma}$ where \eqref{Cond*} holds from the rest.
Note that, by definition $\tilde{u}\neq u^R$ only in the subset where $q^{u^R}>\delta$. It follows

\begin{equation}
J_{B_{l+\eta}}(\tilde{u})-J_{B_{l+\eta}}(u^R)
=J_{B_{l+\eta}\cap\{q^{u^R}>\delta\}}(\tilde{u})-J_{B_{l+\eta}\cap\{q^{u^R}>\delta\}}(u^R).
\label{E-diff-in}
\end{equation}
Let $L_\theta=\{x(r,\theta): r\in(0,+\infty)\}$ be the ray through $x(r,\theta)$ and define $\Theta$ by
\[\Theta=\{\theta: L_\theta\cap\{x\in B_l:\min_{\tilde{a}\in A\setminus\{a\}}\vert u^R(x)-\omega\tilde{a}\vert<\delta^*\}\neq\emptyset\}.\]
 Set $l^\prime=(l^2+\vert\Sigma\vert^2)^\frac{1}{2}$  and define
\begin{equation}
U=\cup_{\theta\in\Theta} L_\theta\cap\{x\in B_{l^\prime}:q^{u^R}\geq 2c\delta^\alpha\}.
\label{UU}
\end{equation}
 We divide the set $B_{l+\eta}\cap\{q^{u^R}>\delta\}$ in three parts:
 \[U,\;\;V_1=(B_l\cap\{q^{u^R}>\delta\})\setminus U,\;\;V_2=((B_{l+\eta}\setminus B_l)\cap\{q^{u^R}>\delta\})\setminus U\]
 and estimate separately the difference of energy of $\tilde{u}$ and $u^R$ for the three sets.
  See Figure \ref{klematis} for an illustration of the set $U$.
  \begin{figure}
  \begin{center}
\begin{tikzpicture}[scale=.7]

\draw [] (-.5,.5) to [out=10,in=230] (2.5,2.5);
\node at (.5,1.5) {$q^{u^R}<\delta$};

\draw [] (2.5,-7.5) to [out=40,in=190] (7.5,-5.7);
\node at (5,-6.7) {$q^{u^R}<\delta$};

\path [fill=lightgray] (2,-2) to [out=90,in=90] (4.8,-2)
to [out=270,in=270] (2,-2) ;

\path [fill=gray] (2.2,-2) to [out=90,in=180] (3.35,-1.6)
to [out=0,in=90] (4.5,-2) to [out=270, in=0] (3.35,-2.4) to [out=180, in=270] (2.2,-2);

\path [fill=lightgray] (1.7,-4.5) to [out=90,in=180] (2.51,-4.2)
to [out=0,in=90] (3.5,-4.5) to [out=270, in=0] (2.51,-4.8) to [out=180, in=270] (1.7,-4.5);

\draw [] (0,1) arc [radius=3, start angle=90, end angle=-90];
\draw [] (0,2) arc [radius=4, start angle=90, end angle=-90];

\draw [] (4,2) arc [radius=5.657, start angle=45, end angle=-45];

\draw [blue] (0,-.6) arc [radius=18, start angle=90, end angle=68];
\draw [blue] (0,-2.7) arc [radius=15.9, start angle=90, end angle=68];

\draw [blue] (0,-4.8) arc [radius=13.8, start angle=90, end angle=68];
\draw [blue] (0,-3.5) arc [radius=15.1, start angle=90, end angle=68];

\draw [<-] (2.235,-6.5054) arc [radius=12.3, start angle=73.557, end angle=71.557];
\draw [<-] (1.61,-6.406) arc [radius=12.3, start angle=82.48, end angle=84.48];

\draw [] (2.146,-7) -- (3.811,2);

\draw [] (1.531,-7) -- (2.72,2);

\draw [<-] (5,-5) -- (6,-4.7);

\draw [->] (6.2,-4.2) -- (5.8,-3);

\node at (6.2,-4.5) {$\tilde{\Sigma}$};

\draw [->] (2,-3) -- (2.5,-2.5);

\draw [->] (2,-3.5) -- (2.4,-4.5);

\node at (1,-3.25) {$\{q^{u^R}\geq 2c\delta^\alpha\}$};

\draw [->] (4,-.5) -- (3.5,-2);

\node at (6.3,-.5) {$\{\min_{j=2}^N\vert u^R-\omega^{j-1}a\vert<\delta^*\}$};

\draw [->] (1,-.27) -- (1.5,.598);
\node at (1.25,.4) {$l$};

\draw [->] (1.431,.0475) -- (2.2944,1.276);
\node at (1.9544,1.076) {$l^\prime$};

\draw [->] (3.461,1.1155) -- (4.204,1.784);
\draw [->] (2.75,.4753) -- (2.2296,.007);

\node at (3.1,.9) {$\eta$};

\draw[fill=black] (0,-2) circle [radius=.02];
\node at (-.25,-2.1) {$x_0$};

\draw[fill=red,red] (2.275,-1.5) circle [radius=.03];
\draw[fill=red,red] (2.167,-2.37) circle [radius=.03];

\draw[fill=red,red] (2.92,-2.78) circle [radius=.03];
\draw[fill=red,red] (3.23,-1.2) circle [radius=.03];

\draw[fill=red,red] (1.88,-4.28) circle [radius=.03];
\draw[fill=red,red] (2.64,-4.2) circle [radius=.03];

\draw[fill=red,red] (1.82,-4.7) circle [radius=.03];
\draw[fill=red,red] (2.55,-4.78) circle [radius=.03];


\node at (1.93,-6.4){$\Theta$};

\node at (2.5,-1.7){$1$};
\node at (2.1,-4.5){$2$};

\end{tikzpicture}
\end{center}
\caption{The construction of $U$. $U$ is the union of the two curvilinear quadrilatera $1$ and $2$
marked with red dots.}
\label{klematis}
\end{figure}
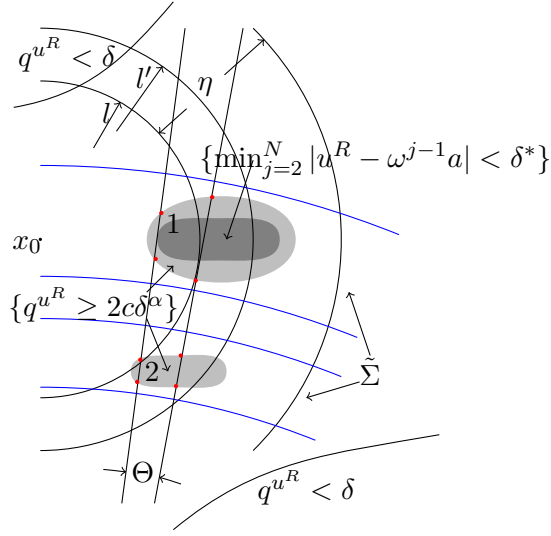

\begin{lemma}
\label{Jtildeu-JuR}
Assume $2\alpha+\alpha^\prime<1$. Then there exists $D>0$ such that, given $\delta\in(0,D^{\frac{1}{1-(2\alpha+\alpha^\prime)}}]$, there are $\eta_\delta>0$ and $\bar{r}=\bar{r}_\delta>0$ independent of $l$ and $R$ which, provided
\[r(x_0)\geq \frac{l}{\sin{\frac{\pi}{N}}}+\bar{r}_\delta,\quad\;l\geq l_0=\frac{\delta}{M},\]
 imply
\begin{equation}
J_U(\tilde{u})-J_U(u^R)\leq 0.
\label{negative}
\end{equation}
\end{lemma}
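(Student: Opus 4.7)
My plan is to decompose $U$ into the inner part $U\cap B_l$ (where $g\equiv 0$, so $q^{\tilde u}=\delta$) and the transition annulus $U\cap(B_{l+\eta}\setminus B_l)$, and to combine a pointwise energy-difference identity with quantitative measure estimates on the two pieces. On $U\cap B_l$, using $n^{\tilde u}=n^{u^R}$ and $q^{u^R}\geq 2c\delta^\alpha>\delta$ (for $\delta$ small, as $\alpha<1$), the polar decomposition $|\nabla v|^2=|\nabla q^v|^2+(q^v)^2|\nabla n^v|^2$ gives
\[
e(\tilde u)-e(u^R)=-\tfrac12|\nabla q^{u^R}|^2-\tfrac12\bigl((q^{u^R})^2-\delta^2\bigr)|\nabla n^{u^R}|^2+W(a+\delta n^{u^R})-W(u^R),
\]
whose first two terms are non-positive and whose last term is $\leq\tfrac12 C_W^2\delta^2$ by Lemma \ref{W-lemma}. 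On the transition annulus, $|\nabla g|=1/\eta$ together with $(q^{\tilde u})^2\leq(q^{u^R})^2$ and \eqref{uR-bounds} yield a crude pointwise loss $e(\tilde u)-e(u^R)\leq C/\eta^2$ with $C=C(M,W)$.

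The decisive measure bounds are as follows: by \eqref{Cond}, any $x\in B_l\setminus\tilde\Sigma$ satisfies $q^{u^R}(x)\leq c\delta^\alpha<2c\delta^\alpha$, so $U\cap B_l\subset\tilde\Sigma\cap B_l(x_0)$ and hence $|U\cap B_l|\leq\pi l|\Sigma|\leq\pi lC/\delta^{1+\alpha'}$ via \eqref{SigmaSize}. Moreover $\Theta$ lies in the angular shadow of $B_l(x_0)$ viewed from the origin, so $|\Theta|\leq\pi l/r(x_0)$ and, using the hypothesis $r(x_0)\geq l/\sin(\pi/N)+\bar r_\delta$, the chord-length estimate in the annulus gives $|U\cap(B_{l+\eta}\setminus B_l)|\leq Cl\eta$. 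Consequently, the cumulative inner potential loss is $\leq C'l\delta^{1-\alpha'}$ and the transition loss is $\leq C''l/\eta$.

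To turn these bounds into $J_U(\tilde u)-J_U(u^R)\leq 0$, I would extract a matching saving from the transit energies of $u^R$: by the very definition of $\Theta$, each ray $L_\theta$, $\theta\in\Theta$, contains a transit from $q^{u^R}\leq c\delta^\alpha$ (on the boundary of $\tilde\Sigma$, by \eqref{Cond}) to $|u^R-\omega^{j-1}a|\leq\delta^*$ for some $j\geq 2$ (inside $\tilde\Sigma$); Lemma \ref{lower-sigma} then bounds the 1D energy of this transit from below by $\sigma-C_W(c^2\delta^{2\alpha}+(\delta^*)^2)\geq\sigma/2$ for $\delta,\delta^*$ small. Integrating in area with the element $r\,dr\,d\theta$ gives $J_U(u^R)\geq(\sigma/4)\,|\Theta|\,r(x_0)$. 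Bounding $\int\tfrac12\delta^2|\nabla n^{u^R}|^2\leq\tfrac{\delta^{2-2\alpha}}{4c^2}J_U(u^R)$ (thanks to $(q^{u^R})^2\geq 4c^2\delta^{2\alpha}$) and combining everything, one gets
\[
J_U(\tilde u)-J_U(u^R)\leq\Bigl(\tfrac{\delta^{2-2\alpha}}{4c^2}-1\Bigr)J_U(u^R)+C'l\delta^{1-\alpha'}+C''l/\eta,
\]
whose right-hand side is non-positive as soon as $\delta^{2-2\alpha}/(4c^2)<1/2$ and each remaining term is dominated by the matching fraction of the saving $(\sigma/4)|\Theta|r(x_0)$; since $|U\cap B_l|\leq|\Theta|r(x_0)|\Sigma|$, the potential loss scales as $|\Theta|r(x_0)\cdot\delta^{1-\alpha'}$ and the balance is preserved uniformly. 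All three smallness conditions combine into the threshold $\delta\leq D^{1/(1-2\alpha-\alpha')}$ of the statement (the hypothesis $2\alpha+\alpha'<1$ ensures the exponent is positive), together with $\eta=\eta_\delta$ large and $\bar r_\delta$ large enough to validate the angular-shadow estimate.

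The main obstacle is making the per-ray transit lower bound $\geq\sigma/2$ effective and uniform in $\theta\in\Theta$, and carefully tracking which portion of each transit actually sits inside $U$ rather than leaking into the neck $\{q^{u^R}<2c\delta^\alpha\}$ or into the annular transition layer. Both the threshold $2c\delta^\alpha$ in the definition of $U$ and the condition $2\alpha+\alpha'<1$ are calibrated precisely to this end, and the technical bookkeeping of constants is what forces the specific form of the smallness threshold on $\delta$.
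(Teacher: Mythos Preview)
Your overall architecture matches the paper's: a per-ray transit lower bound for $J_U(u^R)$ (via Lemma~\ref{lower-sigma}) against a pointwise upper bound for the density of $\tilde u$ that exploits $q^{u^R}\ge 2c\delta^\alpha$ on $U$. Your device of absorbing the angular kinetic term of $\tilde u$ into $J_U(u^R)$ through the inequality $\tfrac12\delta^2|\nabla n|^2\le \tfrac{\delta^{2-2\alpha}}{4c^2}\cdot\tfrac12(q^{u^R})^2|\nabla n|^2$ is a legitimate variant of the paper's use of the $L^\infty$ gradient bound $|\nabla u^R|\le M$; both rely on the same threshold $q^{u^R}\ge 2c\delta^\alpha$.

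There are, however, two genuine gaps. First, your measure bounds must carry the factor $|\Theta|$ throughout, exactly as in the paper's estimate $|U|\le (r(x_0)+l')\,|\Sigma|\,|\Theta|$. Your initial bound $|U\cap B_l|\le \pi l|\Sigma|$ drops this factor, and then the loss terms $C'l\delta^{1-\alpha'}$ and $C''l/\eta$ cannot be dominated by the saving $\asymp|\Theta|\,r(x_0)$ when $|\Theta|$ is small; the late remark that ``the balance is preserved uniformly'' does not repair this. Second, and more seriously, your treatment of the annulus $U\setminus B_l$ is incorrect. The pointwise claim $e(\tilde u)-e(u^R)\le C/\eta^2$ fails: the potential difference $W(\tilde u)-W(u^R)$ is $O(1)$ (since $u^R$ may sit near another zero of $W$), and the cross term in $|\nabla q^{\tilde u}|^2$ is $O(1/\eta)$, not $O(1/\eta^2)$. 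Coupled with your area bound $|U\cap(B_{l+\eta}\setminus B_l)|\le Cl\eta$, the annulus loss actually grows with $\eta$. The paper avoids this by the key geometric observation you are missing: $U\subset B_{l'}$ with $l'-l\le \hat r_\delta$ \emph{bounded independently of $l$ and $\eta$} (this is where $|\Sigma|\le C\delta^{-(1+\alpha')}$ enters via $l'=(l^2+|\Sigma|^2)^{1/2}$). One then fixes $\eta=\eta_\delta=\tfrac{M}{\delta}(1+\hat r_\delta)$, so that on $U\setminus B_l$ one has $g\le\hat r_\delta/\eta\ll1$, hence $q^{\tilde u}\le 2\delta$ and $|\nabla q^{\tilde u}|\le\delta$, yielding $E(\tilde u)\le C\delta^{2(1-\alpha)}$ on all of $U$. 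Multiplying this uniform density bound by $|U|\le C|\Theta|\,r(x_0)\,\delta^{-(1+\alpha')}$ produces $J_U(\tilde u)\le C|\Theta|\,r(x_0)\,\delta^{1-(2\alpha+\alpha')}$, which is precisely where the exponent $1-(2\alpha+\alpha')$ and the smallness threshold on $\delta$ originate; your sketch does not make this derivation visible.
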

\begin{proof}
The set $U$ contains all points in $B_l(x_0)$ where condition \eqref{Cond*} is not satisfied and by \eqref{Cond} is  a subset of $\tilde{\Sigma}$.
Moreover the choice of $l^\prime$ ensures that, if $\xi\in L_{\theta(\xi)}\cap B_l(x_0)$ is one of such points, then $L_{\theta(\xi)}\cap B_{l^\prime}(x_0)$ contains a point $\xi^\prime\neq\xi$ that satisfies
\[\vert u^R(\xi^\prime)-a\vert=2c\delta^\alpha,\]
 and is such that the interval with extremes $\xi$ and $\xi^\prime$ is contained in $U$.
This, the smallness of $\delta$ and Lemma \ref{lower-sigma} imply
\begin{equation}
J(u_{\mathrm{restr}},(0,\vert\xi^\prime-\xi\vert))\geq\sigma-\frac{C_W}{2}(4c^2\delta^{2\alpha}+{\delta^*}^2)
\geq\sigma-C_W{\delta^*}^2,
\label{trace-E}
\end{equation}
where $u_{\mathrm{restr}}(s)=u^R(\xi+s\frac{\xi^\prime-\xi}{\vert\xi^\prime-\xi\vert})$, $s\in(0,\vert\xi^\prime-\xi\vert)$, is the restriction of $u^R$ to the interval with extreme at $\xi$ and $\xi^\prime$.
From \eqref{trace-E} we have
\begin{equation}
\begin{split}
&J_U(u^R)\geq\int_{\{x(r,\theta)\in U\}}(\frac{1}{2}\vert\frac{\partial}{\partial r} u^R\vert^2+W(u^R))rdrd\theta\\
&\geq(r(x_0)-l^\prime)\int_\Theta J(u_{\mathrm{restr}},(0,\vert\xi^\prime-\xi\vert))d\theta\geq(r(x_0)-l^\prime)(\sigma-C_W{\delta^*}^2)\vert\Theta\vert.
\end{split}
\label{JuR}
\end{equation}

We now derive an upper bound for $J_U(\tilde{u})$ and show that, if $\delta$ is sufficiently small and if $\eta$ is sufficiently large, then $J_U(\tilde{u})\leq J_U(u^R)$. Since $U\subset\{q^{u^R}>\delta\}$, from \eqref{utilde} we have
\begin{equation}
q^{\tilde{u}}=\left\{\begin{array}{l}\delta,\;\;x\in U\cap B_l,\\
\delta+\frac{\vert x-x_0\vert-l}{\eta}(q^{u^R}-\delta),\;\;x\in U\setminus B_l.
\end{array}\right.
\label{utildeU}
\end{equation}
This \eqref{polar-d}, \eqref{uR-bounds} and $q^{u^R}\geq 2c\delta^\alpha$ on $U$ imply ($E(v)=\frac{\vert\nabla v\vert^2}{2}+W(v)$)
\begin{equation}
\begin{split}
&E(\tilde{u})=\frac{1}{2}\delta^2\vert\nabla n^{u^R}\vert^2+W(\tilde{u})
\leq\frac{1}{2}(\frac{\delta}{q^{u^R}})^2({q^{u^R}})^2\vert\nabla n^{u^R}\vert^2+\frac{1}{2}C_W^2\delta^2\\
&\leq\frac{1}{8c^2}\delta^{2(1-\alpha)}{M}^2+\frac{1}{2}C_W^2\delta^2\leq C_3\delta^{2(1-\alpha)},\;\;x\in U\cap B_l.
\end{split}
\label{bo}
\end{equation}
To estimate $E(\tilde{u})$ on $U\setminus B_l$ we note that \eqref{SigmaSize} and \eqref{l-0} imply
\begin{equation}
l^\prime-l=\frac{\vert\Sigma\vert^2}{l^\prime+l}\leq\frac{C^2}{\delta^{2(1+\alpha^\prime)}(l^\prime+l)}
\leq\frac{C^2}{\delta^{2(1+\alpha^\prime)}2l_0}\leq\frac{C_2^2M}{2\delta^{3+2\alpha^\prime}}=\hat{r}_\delta.
\label{lprime-l}
\end{equation}
From this and \eqref{utildeU} we have
\begin{equation}
\begin{split}
&q^{\tilde{u}}\leq\delta+\frac{l^\prime-l}{\eta}(q^{u^R}-\delta)\leq\delta+\frac{M}{\eta}\hat{r}_\delta,\\
&\vert\nabla q^{\tilde{u}}\vert\leq\frac{1}{\eta}((q^{u^R}-\delta)+\hat{r}_\delta
\vert\nabla q^{u^R}\vert)\leq\frac{M}{\eta}(1+\hat{r}_\delta).
\end{split}
\label{bo-out}
\end{equation}
These inequalities and
\begin{equation}
\eta=\eta_\delta=\frac{M}{\delta}(1+\hat{r}_\delta)
\label{assumeeta}
\end{equation}
imply $q^{\tilde{u}}\leq 2\delta$ and $\vert\nabla q^{\tilde{u}}\vert\leq\delta$ and in turn, proceeding as in the derivation of \eqref{bo}
\begin{equation}
\begin{split}
&E(\tilde{u})\leq\frac{1}{2}(\delta^2+\frac{1}{c^2}\delta^{2(1-\alpha)}M^2+4C_W^2\delta^2)\\
&\leq C_3\delta^{2(1-\alpha)},\;\;x\in U\setminus B_l.
\end{split}
\label{boout}
\end{equation}
From \eqref{bo}, \eqref{boout} and \eqref{SigmaSize} that implies
\[\vert U\vert=\int_{\{x(r,\theta)\in U\}}rdrd\theta\leq(r(x_0)+l^\prime)\vert\Sigma\vert\vert\Theta\vert
\leq(r(x_0)+l^\prime)\frac{C_2}{\delta^{1+\alpha^\prime}}\vert\Theta\vert,\]
il follows
\begin{equation}
J_U(\tilde{u})\leq(r(x_0)+l^\prime)C_4\vert\Theta\vert\delta^{1-(2\alpha+\alpha^\prime)}.
\label{Jutilde}
\end{equation}
From this and \eqref{JuR}we have
\[J_U(\tilde{u})-J_U(u^R)\leq(r(x_0)+l^\prime)C_4\vert\Theta\vert\delta^{1-(2\alpha+\alpha^\prime)}-
(r(x_0)-l^\prime)(\sigma-C_W{\delta^*}^2)\vert\Theta\vert,\]
which implies \eqref{negative} provided

\[\delta^{1-(2\alpha+\alpha^\prime)}\leq\frac{r(x_0)-l^\prime}{r(x_0)+l^\prime}\frac{\sigma-C_W{\delta^*}^2}{C_4}.\]
Note that
\begin{equation}
r(x_0)\geq\frac{l+\eta_\delta}{\sin{\frac{\pi}{N}}}=\frac{l}{\sin{\frac{\pi}{N}}}+\bar{r}_\delta
,\;\;\bar{r}_\delta=\frac{\eta_\delta}{\sin{\frac{\pi}{N}}}.
\label{rx0>}
\end{equation}
Otherwise we have $B_{l+\eta_\delta}(x_0)\cap B_{l+\eta_\delta}(\omega x_0)\neq\emptyset$.

Since $l^\prime<l+\eta_\delta$, \eqref{rx0>}  implies $\frac{r(x_0)-l^\prime}{r(x_0)+l^\prime}\geq\frac{1-\sin{\frac{\pi}{N}}}{1+\sin{\frac{\pi}{N}}}$. Therefore, under that condition, we see that  we can secure \eqref{negative} if we fix $\delta\in(0,\delta_0]$ so that
\begin{equation}
\delta\leq D^{\frac{1}{1-(2\alpha+\alpha^\prime)}},\;\;
D=\frac{\sigma(1-\sin{\frac{\pi}{N}})}{2C_4(1+\sin{\frac{\pi}{N}})}.
\label{fixdelta}
\end{equation}
By increasing the value of $C_4$ if necessary we can also assume that $D^{\frac{1}{1-(2\alpha+\alpha^\prime)}}\leq\delta_0$.
Once $\delta>0$ is fixed, \eqref{lprime-l} implies that $4(l^\prime-l)\leq \bar{r}=\bar{r}_\delta=4\hat{r}_\delta$ for some constant $\bar{r}>0$ independent of $l$ and $R$. The same is true for $\eta=\eta_\delta$ defined in \eqref{assumeeta}. This concludes the proof.
\end{proof}

\begin{remark}
Observe that with $\delta>0$ fixed also the bound for $\vert\Sigma\vert$ given by Lemma \ref{Sigma} is a constant independent of $l$ and $R$.
\end{remark}
To estimate $J_{V_i}(\tilde{u})-J_{V_i}(u^R)$, $i=1,2$ we proceed
  as in the proof of Lemma 5.4 \cite{afs}. The definition \eqref{utilde} of $q^{\tilde{u}}$ implies
 \begin{equation}
 q^{\tilde{u}}=\delta,\quad \nabla q^{\tilde{u}}=0,\;\;\text{ on }\;V_1.
 \label{qtildeinV1}
 \end{equation}
 From this \eqref{polar-d} and Lemma \ref{W-mon} we deduce
 \[\begin{split}
 &J_{V_1}(\tilde{u})-J_{V_1}(u^R)\\
 &=\int_{V_1}\Big(\frac{1}{2}(-\vert\nabla q^{u^R}\vert^2+(\delta^2-(q^{u^R})^2)\vert n^{u^R}\vert^2)+W(a+q^{\tilde{u}}n^{u^R})-W(a+q^{u^R}n^{u^R})\Big)dx\\
 &\leq 0,
 \end{split}\]
 which yields
 \begin{equation}
  J_{V_1}(\tilde{u})-J_{V_1}(u^R)\leq 0.
 \label{V1}
 \end{equation}
 It remains to evaluate the difference of energy on $V_2\subset B_{l+\eta}\setminus B_l$. From  \eqref{utilde} we have
 \[\vert\nabla q^{\tilde{u}}\vert\leq\vert\nabla g\vert(q^{u^R}-\delta)+\vert g\vert\nabla q^{u^R}\vert\leq(\frac{1}{\eta}+1)M,\;\;\text{a.e. on}\;V_2\]

 From this and \eqref{utilde} that implies $q^{\tilde{u}}\leq q^{u^R}$ on $B_{l+\eta}\setminus B_l$ it follows
 \begin{equation}
 \vert\nabla\tilde{u}\vert^2-\vert\nabla u^R\vert^2\leq(\frac{1}{\eta}+1)^2M^2+((q^{\tilde{u}})^2- (q^{u^R})^2)\vert n^{u^R}\vert^2\leq(\frac{1}{\eta}+1)^2M^2,\;\;\text{a.e. on}\;V_2.
 \label{nabla-nabla}
 \end{equation}
 From \eqref{uR-bounds} we have $q^{\tilde{u}}\leq q^{u^R}\leq M$ on $V_2$ and in turn
\[W(a+q^{\tilde{u}}n^{u^R})-W(a+q^{u^R}n^{u^R})\leq W(a+q^{\tilde{u}}n^{u^R})\leq C_M,\;\;\text{a.e. on}\;V_2\]
for some constant $C_M>0$.
This and \eqref{nabla-nabla} imply
\begin{equation}
J_{V_2}(\tilde{u})-J_{V_2}(u^R)\leq(\frac{1}{2}(\frac{1}{\eta}+1)^2M^2+W_M)\vert V_2\vert.
\label{V2}
\end{equation}
From the previous analysis we conclude
\begin{lemma}
\label{5.4}
There exists $\bar{\delta}>0$ such that, given $\delta\in(0,\bar{\delta}]$ there are $\eta=\eta_\delta>0$, $K=K_\delta>0$ and $\bar{r}=\bar{r}_\delta>0$ which, provided
\begin{equation}
\begin{split}
& l\geq l_0+\eta,\quad l_0=\frac{\delta}{M},\\
&r(x_0)\geq 2l+\bar{r},
\end{split}
\label{r>2l}
\end{equation}
imply
\begin{equation}
J_{B_{l+\eta}}(\tilde{u})-J_{B_{l+\eta}}(u^R)\leq K\vert(B_{l+\eta}\setminus B_{l-\eta})\cap\{q^{u^R}>\delta\}\vert,
\label{w}
\end{equation}
\end{lemma}
\begin{proof}
From \eqref{V1}, \eqref{V2}, \eqref{negative} in Lemma \ref{Jtildeu-JuR}, and \eqref{E-diff-in} we obtain
\[J_{B_{l+\eta}}(\tilde{u})-J_{B_{l+\eta}}(u^R)\leq K\vert V_2\vert,\]
where $K_\delta=\frac{1}{2}(\frac{1}{\eta_\delta}+1)^2M^2+W_M$. This and $V_2\subset B_{l+\eta}\setminus B_{l}\subset B_{l+\eta}\setminus B_{l-\eta}$ conclude the proof.
\end{proof}

The estimate \eqref{w} corresponds exactly to the statement of Lemma 5.4 in \cite{afs}. Once  \eqref{w} is established the proof proceeds
exactly as the proof of Theorem 5.3 (pag.165 in \cite{afs}):
set $l_h=l_0+(2h-1)\eta$ for $h=1,\ldots$ and let ${\tilde{u}}_h$ the map ${\tilde{u}}$ given by Lemma \ref{5.4} for $l=l_h$, $h=1,\ldots$. Let $v_h$ the map $v$ given by Lemma \ref{lemma53} with ${\tilde{u}}={\tilde{u}}_h$ and $l=l_h$. Then,
the minimality of $u^R$ implies
\[\begin{split}
&0\geq J_{B_{l_h+\eta}}(u^R)-J_{B_{l_h+\eta}}(v_h)\\
&=J_{B_{l_h+\eta}}(u^R)-J_{B_{l_h+\eta}}({\tilde{u}}_h)+J_{B_{l_h+\eta}}({\tilde{u}}_h)-J_{B_{l_h+\eta}}(v_h)\\
&=J_{B_{l_h+\eta}}(u^R)-J_{B_{l_h+\eta}}({\tilde{u}}_h)+J_{B_{l_h}}({\tilde{u}}_h)-J_{B_{l_h}}(v_h).
\end{split}\]
\noindent
This (\ref{w}) and (\ref{energy+}) yield
\begin{equation}
k\vert B_{l_h-\eta}\cap\{q^{u^R}>\delta\}\vert\leq K\vert B_{l_h+\eta}\setminus B_{l_h-\eta})\cap\{q^{u^R}>\delta\}\vert,\quad\text{ for }\;\;h=1,\ldots
\label{measure-rel}
\end{equation}
\noindent
It is rather clear that, if $\mu_0:=\vert B_{l_0}\cap\{q^{u^R}>\delta\}\vert\neq 0$, this inequality cannot hold for large $h$. Indeed, if we set $\mu_h:=\vert B_{r_h+\eta}\cap\{q^{u^R}>\delta\}\vert$, for $h=1,\ldots$ then (\ref{measure-rel})
yields
\begin{equation}
\frac{k}{K}\mu_{h-1}\leq\mu_h-\mu_{h-1},\quad\text{ for }\;\;h=1,\ldots
\label{iterative}
\end{equation}
and therefore
\[
\mu_0\Big(1+\frac{k}{K}\Big)^h\leq\mu_h,\quad\text{ for }\;\;h=1,\ldots.
\]
From this and (\ref{iterative}) we obtain
\begin{equation}
\mu_0\frac{k}{K}\Big(1+\frac{k}{K}\Big)^{h-1}\leq\mu_h-\mu_{h-1}
,\quad\text{ for }\;\;h=1,\ldots.
\label{ITer11}
\end{equation}

Now assume that
\[q^{u^R}(x_0)=\vert u^R(x_0)-a\vert\geq 2\delta.\]
 Then  \eqref{uR-bounds}implies
\[\delta<q^{u^R},\quad\text{ for }\;\;x\in B_{\frac{\delta}{M}}(x_0).\]
It follows $\mu_0\geq\vert B_{\frac{\delta}{M}}(x_0)\vert$.
Since:
\[\mu_h-\mu_{h-1}\leq\vert B_{l_h+\eta}\setminus\overline{B_{l_h-\eta}}\vert
\leq4\eta l_h=4\eta(l_0+h\eta),\]
the right end side of \eqref{ITer11} grows linearly in $h$.  On the other hand the left hand side grows exponentially in $h$. Hence there exists a minimum value $h_m$ of $h$ such that \eqref{ITer11} is violated for $h\geq h_m$  in contradiction with the minimality of $u^R$. It follows that $l\geq l_{h_m}$ together with the condition $r(x_0)\geq 2l+\bar{r}$ required from Lemma \ref{5.4} imply
\[q^{u^R}(x_0)=\vert u^R(x_0)-a\vert<2\delta.\]
Finally we observe that $l_{h_m}$ is actually a function of $\delta$ this is a consequence of the fact that $\eta$, $\mu_0$ and $h_m$ are all function of $\delta$.
This concludes the proof Theorem \ref{uR<}.
\end{proof}

From now on we assume that $\delta\in(0,\bar{\delta}]$ is fixed and treat $\bar{l}$ and $\bar{r}$ as fixed constants.
\begin{corollary}
\label{cor}
Let $\delta\in(0,\bar{\delta}]$, $\bar{l}$ and $\bar{r}$ be as in Theorem \ref{uR<} and let $u^R$ be a minimizer of \eqref{energy}. Assume  $x_0\in B_R$ and $d>0$ are such that

\begin{equation}
\begin{split}
& B_{\bar{l}+d}(x_0)\subset B_R,\\
& r(x_0)\geq 2\bar{l}+\bar{r}+d,\\
&\vert u^R(x)-a\vert\leq c\delta^\alpha,\quad x\in B_{\bar{l}+d}(x_0)\setminus\tilde{\Sigma}.
\end{split}
\label{cor-cond}
\end{equation}
Then
\begin{equation}
\vert u^R(x_0)-a\vert\leq 2\delta e^{-\bar{k}d},
\label{<exp}
\end{equation}
where $\bar{k}>0$ is independent of $R$.
\end{corollary}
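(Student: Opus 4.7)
The plan is to first upgrade the $2\delta$-pointwise bound supplied by Theorem~\ref{uR<} so that it holds at \emph{every} point of $B_d(x_0)$, and then run a radial sub/super-solution comparison on the two-dimensional ball $B_d(x_0)$ to produce the exponential decay rate.

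For the first step, I would apply Theorem~\ref{uR<} at each $y\in B_d(x_0)$, keeping the same set $\Sigma$ and the same $\delta$. The ball $B_{\bar l}(y)$ sits inside $B_{\bar l+d}(x_0)$, so the hypothesis $|u^R(x)-a|\leq c\delta^\alpha$ on $B_{\bar l+d}(x_0)\setminus\tilde\Sigma$ in \eqref{cor-cond} immediately yields condition \eqref{Cond} at $y$, and the bound \eqref{SigmaSize} is intrinsic to $\Sigma$ and transfers unchanged. Moreover $r(y)\geq r(x_0)-d\geq 2\bar l+\bar r$, which gives \eqref{l-r-cond} at the new center. Theorem~\ref{uR<} therefore produces $|u^R(y)-a|\leq 2\delta$ for every $y\in B_d(x_0)$. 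After shrinking $\bar\delta$ if necessary I may assume $2\delta\leq\delta_W$, so that $u^R\bigl(B_d(x_0)\bigr)\subset B_{\delta_W}(a)$ and the pointwise quadratic lower bound \eqref{Wuu-a} is in force.

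With $\rho:=|u^R-a|$, the equation $\Delta u^R=W_u(u^R)$ combined with \eqref{Wuu-a} yields
\[
\tfrac{1}{2}\Delta\rho^2 \;=\; |\nabla u^R|^2+(u^R-a)\cdot W_u(u^R)\;\geq\; c_W^2\rho^2 \qquad \text{in } B_d(x_0),
\]
so $\rho^2$ is a subsolution of $\Delta-2c_W^2$ with boundary datum $\leq 4\delta^2$. I would then compare with the radial function $\phi(x)=\dfrac{4\delta^2}{\cosh(\kappa d)}\cosh(\kappa|x-x_0|)$ for some $\kappa\in(0,c_W]$. The elementary bound $\tanh t\leq t$ gives $\Delta\phi=\bigl(\kappa^2+\tfrac{\kappa\tanh(\kappa r)}{r}\bigr)\phi\leq 2c_W^2\phi$, and $\phi\geq 4\delta^2\geq\rho^2$ on $\partial B_d(x_0)$. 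Applying the weak maximum principle for the operator $-\Delta+2c_W^2$ (whose zeroth-order coefficient is nonnegative) to $\rho^2-\phi$ forces $\rho^2\leq\phi$ throughout $B_d(x_0)$, and evaluation at the center produces $\rho^2(x_0)\leq 4\delta^2/\cosh(\kappa d)$. Using $\cosh(s)\geq\tfrac{1}{2}e^s$ and extracting the square root, this reads $|u^R(x_0)-a|\leq 2\sqrt{2}\,\delta\,e^{-\kappa d/2}$. Choosing $\bar k$ slightly smaller than $\kappa/2$ absorbs the numerical factor into the exponential for large $d$, while for small $d$ the trivial bound $|u^R(x_0)-a|\leq 2\delta$ already furnished by Step~1 covers the remaining range, giving \eqref{<exp} with $\bar k>0$ depending only on $c_W$.

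The main technical point is Step~1: one has to check that the hypotheses of Theorem~\ref{uR<} transfer uniformly from $x_0$ to every $y\in B_d(x_0)$ using only the three conditions in \eqref{cor-cond}, reusing the same $\Sigma$ and the same universal constants $\bar l,\bar r$. Once that propagation is in place the remainder is a routine two-dimensional analogue of the $\cosh$-comparison already employed in the one-dimensional Lemma~\ref{ur-exp}.
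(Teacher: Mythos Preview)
Your argument follows exactly the paper's two-step strategy: propagate Theorem~\ref{uR<} to every center $y\in B_d(x_0)$ to obtain $|u^R-a|\leq 2\delta$ on $B_d(x_0)$, then run an elliptic comparison to extract exponential decay. The paper is terser on the second step, citing ``a standard argument, see the proof of Lemma~4.4 in \cite{afs}''; your two-dimensional $\cosh$ barrier (the radial analogue of the comparison already used in Lemma~\ref{ur-exp}) is a correct realization of that argument, and your verification that the hypotheses of Theorem~\ref{uR<} transfer to each $y\in B_d(x_0)$ is exactly what is needed.

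One small slip in the final constant-juggling: the sentence ``for small $d$ the trivial bound $|u^R(x_0)-a|\leq 2\delta$ \ldots covers the remaining range'' is false, since $2\delta>2\delta e^{-\bar kd}$ for every $d>0$. There is no need for a case split: simply take $\bar k=\kappa/2$ and keep the bound $|u^R(x_0)-a|\leq 2\sqrt2\,\delta\,e^{-\bar k d}$. The precise numerical constant in front of $\delta$ plays no role downstream (it is absorbed into generic constants in Section~\ref{lenghtUB}), so the extra $\sqrt2$ is harmless; if you insist on matching the stated constant $2$, replacing the barrier by $4\delta^2 I_0(\sqrt2\,c_W|x-x_0|)/I_0(\sqrt2\,c_W d)$, with $I_0$ the modified Bessel function, gives an exact supersolution and the same exponential rate without the $\sqrt2$ loss.
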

\begin{proof}
 From \eqref{cor-cond} we have that any ball of radius $\bar{l}$ contained in $B_{\bar{l}+d}(x_0)$ satisfies the  assumptions of Theorem \ref{uR<}. It follows
\[\vert u^R(x)-a\vert\leq 2\delta\;\;x\in B_d(x_0).\]
This and a standard argument, see the proof of Lemma 4.4 in \cite{afs},  imply the result.
\end{proof}

Theorem \ref{uR<} is tailored for application to the problem at hand. We state a version of the theorem more appropriate when rectangular coordinates are used. We let $\Omega_R\subset\R^2$ a bounded smooth domain that depends on a parameter $R>0$. For $\nu\in\SF$ we set $\Omega_R\cdot\nu=\{x\cdot\nu:x\in\Omega_R\}$.
\begin{theorem}
\label{r-uR<}
Assume that a minimizer $u^R:\Omega_R\rightarrow\R^2$ of $J_{\Omega_R}$, a ball $B_l(x_0)$, $\nu\in\SF$ and a set
$\Sigma\subset\Omega_R\cdot\nu$ satisfy, for some constants $c>0$ and $C>0$,
\begin{equation}
\vert u^R(x)-a\vert\leq c\delta^\alpha,\;\;x\in B_l(x_0)\setminus\tilde{\Sigma},\;\;\text{for some}\:\alpha\in(0,1)
\label{r-Cond}
\end{equation}
where $\tilde{\Sigma}=\{x\in \Omega_R: x\cdot\nu\in\Sigma\}$, and
\begin{equation}
\vert\Sigma\vert\leq\frac{C}{\delta^{1+\alpha^\prime}},\;\;\text{for some}\;\alpha^\prime\in(0,1).
\label{r-SigmaSize}
\end{equation}

Assume that
\[3\alpha+\alpha^\prime<1.\]
 Then there is a constant $D>0$ such that for each $\delta\leq D^\frac{1}{1-(3\alpha+\alpha^\prime)}$ there exists $l_\delta>0$  independent of $R$ and such that
\begin{equation}
l\geq l_\delta,
\label{l--cond}
\end{equation}
implies
\begin{equation}
\vert u^R(x_0)-a\vert\leq 2\delta.
\label{r-onSigma}
\end{equation}
\end{theorem}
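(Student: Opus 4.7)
My plan is to mirror the argument for Theorem~\ref{uR<}, replacing the polar fibration by the flat one adapted to $\nu$: write $x=y+t\nu$ with $t=x\cdot\nu$ and $y\in\nu^\perp$, so that the fibers along which $\tilde{\Sigma}$ has $1$-dimensional measure at most $\vert\Sigma\vert$ become straight lines $\ell_y=\{y+t\nu:t\in\R\}$. Since no $C_N$-equivariance enters the hypotheses of Theorem~\ref{r-uR<}, the whole deformation is performed in a single ball $B_{l+\eta}(x_0)$ and extended by $u^R$; accordingly the analog of Lemma~\ref{lemma53} needed is its single-ball version, with $\cup_{j=1}^NB_l(\omega^{j-1}x_0)$ collapsed to $B_l(x_0)$ and a proof identical to the one given. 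I would introduce $\tilde{u}=a+q^{\tilde{u}}n^{u^R}$ via \eqref{utilde}, set $l'=(l^2+\vert\Sigma\vert^2)^{1/2}$, and define the rectangular counterpart of $U$ by
\[U=\bigcup_{y\in\Theta}\ell_y\cap B_{l'}(x_0)\cap\{q^{u^R}\geq 2c\delta^\alpha\},\]
with $\Theta\subset\nu^\perp$ the set of $y$ for which $\ell_y$ meets the ``dangerous'' region $\{x\in B_l(x_0):\min_{j=2}^N\vert u^R(x)-\omega^{j-1}a\vert<\delta^*\}$; the pieces $V_1=(B_l\cap\{q^{u^R}>\delta\})\setminus U$ and $V_2=((B_{l+\eta}\setminus B_l)\cap\{q^{u^R}>\delta\})\setminus U$ are defined exactly as in Lemma~\ref{Jtildeu-JuR}.

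The core estimates then follow the template of \eqref{JuR}-\eqref{Jutilde}: for each $y\in\Theta$, condition \eqref{r-SigmaSize} together with the choice of $l'$ produces a point $\xi\in\ell_y\cap B_l(x_0)\cap\tilde{\Sigma}$ with $q^{u^R}(\xi)\geq\delta^*$ and an exit point $\xi'\in\ell_y\cap B_{l'}(x_0)$ with $q^{u^R}(\xi')=2c\delta^\alpha$, joined by a segment inside $U$. Integrating Lemma~\ref{lower-sigma} along these segments with the planar area element $dy\,dt$ (now without any $r$-weight) gives $J_U(u^R)\geq(\sigma-C_W{\delta^*}^2)\vert\Theta\vert$, while the pointwise control $E(\tilde{u})\leq C\delta^{2(1-\alpha)}$ on $U$ together with $\vert U\vert\leq\vert\Sigma\vert\vert\Theta\vert\leq C\delta^{-(1+\alpha')}\vert\Theta\vert$ yields $J_U(\tilde{u})\leq C'\vert\Theta\vert\delta^{1-(2\alpha+\alpha')}$. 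The estimates on $V_1$ and $V_2$ are the unchanged \eqref{V1} and \eqref{V2}, and with $\eta=\eta_\delta$ as in \eqref{assumeeta} they combine with the $U$-estimate to give the rectangular analog of \eqref{w}. The proof then closes by the iteration immediately following Lemma~\ref{5.4}: if $\vert u^R(x_0)-a\vert\geq 2\delta$, then $\mu_h=\vert B_{l_h+\eta}\cap\{q^{u^R}>\delta\}\vert$ satisfies $\mu_0(1+k/K)^h\leq\mu_h\leq 4\eta(l_0+h\eta)$, yielding an exponential-versus-linear contradiction once $l\geq l_{h_m}=:l_\delta$.

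The main obstacle, and the reason for the sharper hypothesis $3\alpha+\alpha'<1$ instead of $2\alpha+\alpha'<1$, is that in the rectangular setting the polar ``large radius'' condition $r(x_0)\geq\bar{r}_\delta$ is no longer available to supply the uniformly positive ratio $(r(x_0)-l')/(r(x_0)+l')$ which, in the proof of Theorem~\ref{uR<}, compensated for the enlargement from $B_l$ to $B_{l'}$ and simultaneously enforced the disjointness required by Lemma~\ref{lemma53}. Without that cushion, the only way to close $J_U(\tilde{u})\leq J_U(u^R)$ is to pay for it with an additional smallness of order $\delta^\alpha$, which is precisely what the strengthened condition $3\alpha+\alpha'<1$ delivers. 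The delicate technical task of the detailed proof is to pinpoint where this extra $\delta^\alpha$ has to be spent---most naturally in a sharper comparison between $\vert U\vert$ and $J_U(u^R)$, now deprived of the $r$-weight---and to verify that all constants involved remain independent of $R$.
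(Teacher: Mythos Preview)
The paper does not actually supply a proof of Theorem~\ref{r-uR<}; it merely states it as the rectangular-coordinate analogue of Theorem~\ref{uR<}. Your outline is the natural adaptation of the proof of Theorem~\ref{uR<} and is essentially what the paper intends: replace the polar fibration by the parallel fibration $\ell_y=\{y+t\nu\}$, drop the $C_N$-equivariance (so the deformation lives in a single ball and the single-ball version of Lemma~\ref{lemma53} applies), define $\tilde u$, $l'$, $U$, $V_1$, $V_2$ exactly as before, and run the same energy comparison and iteration.

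The weak spot is your last paragraph. Your own computation in the second paragraph gives
\[
J_U(u^R)\ge(\sigma-C_W{\delta^*}^2)\,|\Theta|,\qquad
J_U(\tilde u)\le C'\,\delta^{\,1-(2\alpha+\alpha')}\,|\Theta|,
\]
and comparing these only requires $2\alpha+\alpha'<1$. In the rectangular setting there is no $r$-weight, hence no ratio $(r(x_0)-l')/(r(x_0)+l')$ to control: the two sides match directly without any ``cushion''. So the absence of the large-radius condition makes the $U$-comparison \emph{easier}, not harder, and your heuristic that the missing cushion must be paid for by an extra factor $\delta^\alpha$ is not correct. Since the hypothesis $3\alpha+\alpha'<1$ stated in Theorem~\ref{r-uR<} is stronger than $2\alpha+\alpha'<1$, your argument already proves the theorem as stated; what you should say is that the proof goes through under the weaker condition $2\alpha+\alpha'<1$ (so the hypothesis in the statement is perhaps conservative), rather than invent a mechanism that forces the extra $\alpha$. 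If you want to locate a genuine loss, look instead at the bound on $|U\cap(B_{l'}\setminus B_l)|$: the chord of $B_{l'}\setminus B_l$ along $\ell_y$ can be as long as $2\sqrt{l'^2-l^2}=2|\Sigma|$, but this still gives $|U|\le C|\Sigma|\,|\Theta|$ and does not change the exponent.
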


\section{Analysis of the geometric structure of $u^R$.}\label{structure}

From Lemma \ref{Sigma} $r\in[r_\delta,R)\setminus\Sigma$ implies that, modulo a suitable translation, it results $u^R(x(r,\frac{\cdot}{r}))\in\mathscr{V}^*$,  $\mathscr{V}^*$  the set defined by \eqref{V*} in Proposition \ref{away}.  This and equivariance imply in particular the existence of $\theta_r\in[0,2\pi)$
 such that
 \begin{equation}
 \begin{split}
 & u^R(x(r,\theta))\in B_{c\delta^\alpha}(a),\;\;\theta\in(\theta_r^+,\frac{2\pi}{N}+\theta_r^-),\;\, r\in[r_\delta,R)\setminus\Sigma,\\
 & u^R(x(r,\theta))\in B_{c\delta^\alpha}(\omega^{j-1}a),\;\;\theta\in(\theta_r^++\frac{2\pi(j-1)}{N},\frac{2\pi j}{N}+\theta_r^-),\;\,j=2,\ldots,N
 \end{split}
 \label{uR-in-W*}
 \end{equation}
 where
 \[\begin{split}
 &\theta_r^\pm=\theta_r\pm\frac{\nu}{2r},\\
 &\nu=\frac{4\sigma}{c_W^2\delta^2},\,( \text{ see Proposition } \ref{away}).
 \end{split}\]

\subsection{The pseudo regularity of the map $r\rightarrow\theta_r$.}\label{Lip}

We plan to show that the minimality of $u^R$ implies a relationship between the numbers $\theta_{r_*}$ and $\theta_{r}$ corresponding to different radii $r_*$ and $r\in[r_\delta,R)\setminus\Sigma$. Actually we will show that the map $r\rightarrow\theta_r$ has a kind of Lipschitz behavior, see Figure \ref{holderF}. If $\vert\theta_r-\theta_{r_*}\vert\leq\frac{\nu}{\min\{r_*,r\}}$ nothing can be said on the actual value of the difference $\theta_r-\theta_{r_*}$. The situation is different if $\theta_r^--\theta_{r_*}^+>0$ or $\theta_{r_*}^--\theta_r^+>0$. We have indeed

\begin{lemma}
\label{rrprime}
There are $\hat{c}>0$, $\beta\in(0,1)$ and $\bar{r}>r_\delta$ such that, if $R>\bar{r}$, $r_*\in[\bar{r},R)$ and
$r^*=\min\{(1+\beta)r_*,R\}$, then it results
\begin{equation}
\begin{split}
&\theta_{r_*}^++\frac{\nu}{r}+\hat{c}\ln{\frac{r}{r_*}}\geq\theta_r^+>\theta_r^-\geq
\theta_{r_*}^--\frac{\nu}{r}-\hat{c}\ln{\frac{r}{r_*}},\;\;r\in[r_*,r^*)\setminus\Sigma,\\\\
&\theta_{r_*}^++\frac{\nu}{r}+\hat{c}\ln{\frac{r_*}{r}}\geq\theta_r^+>\theta_r^-\geq
\theta_{r_*}^--\frac{\nu}{r}-\hat{c}\ln{\frac{r_*}{r}},\;\;r\in(r_*(1-\beta),r_*]\setminus\Sigma.
\end{split}
\label{srprime-sr}
\end{equation}
and
\begin{equation}
\theta_r^+<\theta_r^-+\frac{2\pi}{N}.
\label{no-cap}
\end{equation}
Moreover if $\theta_r^\pm$ does not satisfies \eqref{srprime-sr} then $r\not\in(r_*(1-\beta),r^*)$.
\end{lemma}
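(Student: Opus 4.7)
The plan is to combine the a priori bound \eqref{du-dr-bound} on the total radial kinetic energy of $u^R$ with a Cauchy--Schwarz estimate along radial segments: if the layer angle $\theta_r$ shifts too much between two nearby fibers $C_{r_*}$ and $C_r$, then on a whole angular sector the radial restriction of $u^R$ must connect two different wells of $W$, which forces a large radial kinetic energy and contradicts \eqref{du-dr-bound}.

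First I would dispatch \eqref{no-cap}. By the very definition $\theta_r^\pm=\theta_r\pm\frac{\nu}{2r}$, one has $\theta_r^+-\theta_r^-=\frac{\nu}{r}$. Choosing $\bar r$ so that $\nu/\bar r<2\pi/N$ immediately yields \eqref{no-cap} for every $r\geq\bar r$.

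Next I would establish the upper bound on $\theta_r^+$ in \eqref{srprime-sr} by contradiction. Fix $r_*\geq\bar r$ and $r\in[r_*,r^*)\setminus\Sigma$ and suppose
\[
\theta_r^+>\theta_{r_*}^++\tfrac{\nu}{r}+\hat c\,\ln(r/r_*)
\]
for a constant $\hat c>0$ to be fixed below. Put $I=(\theta_{r_*}^++\tfrac{\nu}{r},\theta_r^+)$ and $\Delta'=|I|>\hat c\,\ln(r/r_*)$. I choose $\beta\in(0,1)$ small enough (relative to $\hat c$, $\nu/\bar r$ and $2\pi/N$) that, throughout $r\in(r_*(1-\beta),(1+\beta)r_*)$, one has $\Delta'+\nu/\bar r<2\pi/N$; then by \eqref{uR-in-W*} the interval $I$ lies in the $j=1$ near-$a$ sector of $C_{r_*}$ and simultaneously in the $j=N$ near-$\omega^{-1}a$ sector of $C_r$, so
\[
u^R(x(r_*,\theta))\in B_{c\delta^\alpha}(a),\qquad u^R(x(r,\theta))\in B_{c\delta^\alpha}(\omega^{-1}a),\qquad\theta\in I.
\]

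Then I would apply Cauchy--Schwarz along each radial segment at angle $\theta\in I$:
\[
\bigl|u^R(x(r,\theta))-u^R(x(r_*,\theta))\bigr|^2\leq\ln(r/r_*)\int_{r_*}^{r}\bigl|\partial_\rho u^R\bigr|^2\rho\,d\rho.
\]
Since $a\neq\omega^{-1}a$, taking $\delta$ sufficiently small gives a uniform lower bound $|u^R(x(r,\theta))-u^R(x(r_*,\theta))|\geq c_0$ for some $c_0>0$ independent of $R$ and $\theta$. Integrating over $\theta\in I$ and using \eqref{du-dr-bound} yields
\[
\Delta'\,c_0^2\leq\ln(r/r_*)\,\int_0^{2\pi}\!\int_{0}^{R}\bigl|\partial_\rho u^R\bigr|^2\rho\,d\rho\,d\theta\leq 2(C_0+C_1)\ln(r/r_*),
\]
which contradicts $\Delta'>\hat c\,\ln(r/r_*)$ provided one fixes $\hat c>2(C_0+C_1)/c_0^2$. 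The lower bound on $\theta_r^-$ is handled by the symmetric argument on the interval $J=(\theta_r^-,\theta_{r_*}^--\tfrac{\nu}{r})$, where now $u^R$ transitions from near $\omega^{-1}a$ on $C_{r_*}$ to near $a$ on $C_r$; the case $r\in(r_*(1-\beta),r_*]\setminus\Sigma$ is identical after interchanging the roles of $r$ and $r_*$. The ``moreover'' clause is merely the contrapositive of what has just been proved.

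The main obstacle I anticipate is the bookkeeping that ensures that the angular sector $I$ simultaneously sits in an unambiguous near-$a$ sector of $C_{r_*}$ and in an unambiguous near-$\omega^{-1}a$ sector of $C_r$; this is precisely the role of the scale parameter $\beta$ and of the lower bound $r_*\geq\bar r$. Choosing $\beta$ small relative to $\hat c^{-1}\bigl(2\pi/N-2\nu/\bar r\bigr)$ bounds the logarithmic displacement so that the sectors do not ``wrap'' a full period $2\pi/N$ and the assignment of wells on both fibers remains unambiguous; everything else is a straightforward Cauchy--Schwarz plus an appeal to the global radial-energy bound.
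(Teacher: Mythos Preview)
Your approach is exactly the paper's: weighted Cauchy--Schwarz on radial segments combined with the global bound \eqref{du-dr-bound}, and your constant $\hat c$ matches the paper's up to a factor $N$ coming from equivariance. Two corrections are needed.

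First, the interval should be $I=(\theta_{r_*}^+,\theta_r^-)$ rather than $(\theta_{r_*}^++\nu/r,\theta_r^+)$: on the subinterval $(\theta_r^-,\theta_r^+)$ of your $I$, the value $u^R(x(r,\theta))$ lies in the transition layer on $C_r$, not in $B_{c\delta^\alpha}(\omega^{-1}a)$, so your claim about $C_r$ fails there. Since $\theta_r^+-\theta_r^-=\nu/r$, the two intervals have the same length $\Delta'=\theta_r^--\theta_{r_*}^+$, so the arithmetic survives the fix.

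Second, the sentence ``choose $\beta$ small enough that $\Delta'+\nu/\bar r<2\pi/N$'' is circular: the contradiction hypothesis is a \emph{lower} bound $\Delta'>\hat c\ln(r/r_*)$, so no choice of $\beta$ bounds $\Delta'$ from above a priori. The paper handles this with a case split. If $\theta_r^--\theta_{r_*}^+\leq\tfrac{2\pi}{N}-\bar\theta$ for a fixed small $\bar\theta>0$, the sector containment you need is genuine and the Cauchy--Schwarz argument runs exactly as you describe. If instead $\theta_r^--\theta_{r_*}^+>\tfrac{2\pi}{N}-\bar\theta$, the same argument applied to a subinterval of fixed length yields $\ln(r/r_*)\geq C_5$ for some fixed $C_5>0$, and then one takes $\beta$ small enough that $\ln(1+\beta)<C_5$ to exclude this from the range $(r_*(1-\beta),r^*)$. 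Only after both cases are settled does the ``moreover'' clause become the contrapositive you claim.
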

\begin{proof}
1. Let $v:[r_*,r]\rightarrow\R^2$, $r_*<r$, be a smooth function. Then it results
\begin{equation}
\int_{r_*}^{r}\vert v^\prime\vert^2\rho d\rho\geq\frac{\vert v(r)-v(r_*)\vert^2}{\ln{\frac{r}{r_*}}}.
\label{D_r-Kin}
\end{equation}
\noindent
2. Consider first the case $\theta_{r}^--\theta_{r_*}^+\in(0,\frac{2\pi}{N}-\bar{\theta}]$, where $\bar{\theta}>0$ is fixed and small. In this case we have
\begin{equation}
\left.
\begin{array}{l}
u^R(x(r_*,\theta_{r_*}^++\phi))\in B_{c\delta^\alpha}(a),\\
u^R(x(r,\theta_{r_*}^++\phi))\in B_{c\delta^\alpha}(\omega^{N-1}a),
\end{array}
\right.\;\text{for}\;\phi\in(0,\theta_{r}^--\theta_{r_*}^+).
\label{BC-r}
\end{equation}
It follows
\[\vert u^R(x(r,\theta_{r_*}^++\phi))-u^R(x(r_*,\theta_{r_*}^++\phi))\vert\geq\vert(\omega^{N-1}-I)a\vert-2c\delta^\alpha,
\;\text{for}\;\phi\in(0,\theta_{r}^--\theta_{r_*}^+).\]
This and \eqref{D_r-Kin} imply
\[\int_{\theta_{r_*}^+}^{\theta_{r}^-}\int_{r_*}^{r}\vert\frac{\partial}{\partial\rho}u^R\vert^2\rho d\rho\theta
\geq\frac{(\vert(\omega^{N-1}-I)a\vert-2c\delta^\alpha)^2}{\ln{\frac{r}{r_*}}}(\theta_{r}^--\theta_{r_*}^+),\]
which together with the bound \eqref{du-dr-bound} yields
\begin{equation}
\theta_{r}^--\theta_{r_*}^+\leq\hat{c}\ln{\frac{r}{r_*}}\;\Leftrightarrow\;
\theta_{r}^+-\theta_{r_*}^+\leq\frac{\nu}{r}+\hat{c}\ln{\frac{r}{r_*}},
\label{te+-te+}
\end{equation}
where we have set $\hat{c}=\frac{2(C_0+C_1)}{N(\vert(\omega^{N-1}-I)a\vert-2c\delta^\alpha)^2}$. In case $\theta_{r_*}^--\theta_r^+\in(0,\frac{2\pi}{N}-\bar{\theta}]$, the same argument developed above and $\vert(\omega^{N-1}-I)a\vert=\vert\omega-I)a\vert$ leads to
\begin{equation}
\theta_{r_*}^--\theta_{r}^+\leq\hat{c}\ln{\frac{r}{r_*}}\;\Leftrightarrow\;
\theta_{r}^--\theta_{r_*}^-\geq-\frac{\nu}{r}-\hat{c}\ln{\frac{r}{r_*}}.
\label{te--te-}
\end{equation}
Equation \eqref{srprime-sr}$_1$  follows from  \eqref{te+-te+} and \eqref{te--te-}. To prove  \eqref{srprime-sr}$_2$ we observe that proceeding as before we see that $\theta_{r}^--\theta_{r_*}^+\in(0,\frac{2\pi}{N}-\bar{\theta}]$ and $r<r_*$ imply
\[\theta_{r}^--\theta_{r_*}^+\leq\hat{c}\ln{\frac{r_*}{r}}.\]
This and the corresponding statement valid for $\theta_{r_*}^--\theta_r^+\in(0,\frac{2\pi}{N}-\bar{\theta}]$ prove \eqref{srprime-sr}$_2$.

3. From \eqref{srprime-sr}, if $r>r_*$ and $r_*$ is sufficiently large,
\[2\hat{c}\ln{\frac{r}{r_*}}\leq 2\hat{c}\ln{(1+\beta)}\leq\frac{\pi}{N},\]
 is a sufficient condition for \eqref{no-cap}.
From this and the analogous condition for the case $r_*>r$ it follows that $\beta\leq 1-e^{-\frac{\pi}{2\hat{c}N}}$ ensures that \eqref{no-cap} holds.
\vskip.2cm
4. If $\theta_r^--\theta_{r_*}^+>\frac{2\pi}{N}-\bar{\theta}$ and  $r>r_*$ the same reasoning that proves \eqref{te+-te+} leads to
\[\ln{\frac{r}{r_*}}\geq C_5,\]
for some $C_5>0$. Analogous conclusion holds in the cases $\theta_{r_*}^--\theta_r^+>\frac{2\pi}{N}-\bar{\theta}$
 etc. The last statement of the lemma follows from this after a reduction of the value of $\beta$ if necessary. The proof is complete.
\end{proof}

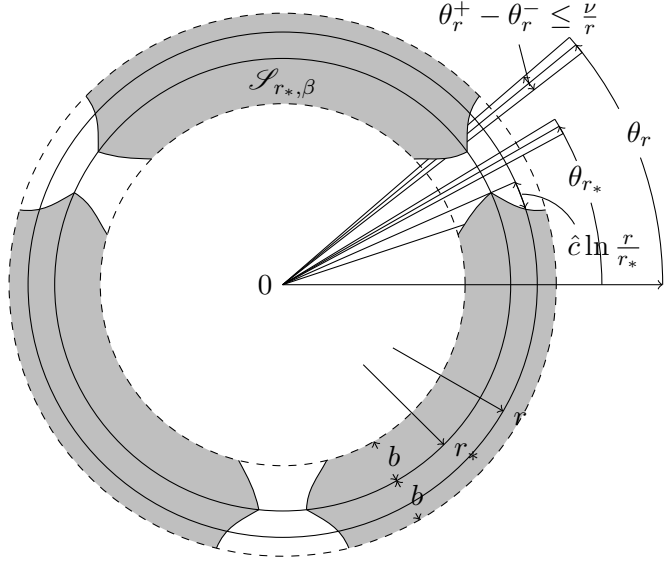
\begin{figure}
  \begin{center}
\begin{tikzpicture}[xscale=1]
\draw (0,0)--(3.637,2.1);
\draw [thin] (0,0)--(3.691,2.004);
\draw [thin] (3.691,2.004) arc [radius=4.2, start angle=28.5, end angle=31.5];
\draw [thin] (3.581,2.194)--(0,0);
\draw (0,0)--(3.858,3.18);
\draw [thin] (0,0)--(3.94,3.078);
\draw [thin] (3.94,3.078) arc [radius=5, start angle=38, end angle=41];
\draw [thin] (3.773,3.28)--(0,0);
\draw [thin] (0,0) -- (3.043,1.015);
\draw [<->] (3.31,2.585) arc [radius=4.2, start angle=38, end angle=41];

\draw [] (4.2,0) arc [radius=4.2, start angle=0, end angle=15];
\draw [->] (3.866,1.641) arc [radius=4.2, start angle=23, end angle=30];
\draw [] (5,0) arc [radius=5, start angle=0, end angle=19.7];
\draw [->] (4.423,2.332) arc [radius=5, start angle=27.8, end angle=39.5];

\path [fill=lightgray] (-2.741,1.220)  to [out=210,in=352] (-3.460,.992)
arc [radius=3.6, start angle=164, end angle=256] to
 [out=68,in=210] (-.3136,-2.983) to
 [out=102,in=298] (-.58,-2.329)
arc [radius=2.4, start angle=256, end angle=164] to
 [out=122,in=328] (-2.741,1.220);
\path [fill=lightgray] (.3136,-2.983) to [out=330,in=112] (.871,-3.493)
arc [radius=3.6, start angle=-76, end angle=16] to
 [out=188,in=330] (2.741,1.220) to
 [out=222,in=58] (2.307,.661)
arc [radius=2.4, start angle=16, end angle=-76] to
 [out=242,in=88] (.3136,-2.983);
\path [fill=lightgray] (2.427,1.763) to [out=90,in=232] (2.589,2.501)
arc [radius=3.6, start angle=44, end angle=136] to
 [out=308,in=90] (-2.427,1.763) to
 [out=342,in=178] (-1.726,1.667)
arc [radius=2.4, start angle=136, end angle=44] to
 [out=2,in=208] (2.427,1.763);


\draw [dashed](0,0) circle [radius=2.4];
\draw (0,0) circle [radius=3];
\draw [dashed](0,0) circle [radius=3.6];
\draw [thin] (0,0) circle [radius=3.35];
\draw [->] (0,0)--(5,0);


\draw[] (.3136,-2.983) to [out=88,in=242] (.58,-2.329);
\draw[] (.3136,-2.983) to [out=330,in=112] (.871,-3.493);
\draw[] (-.3136,-2.983) to [out=102,in=298] (-.58,-2.329);
\draw[] (-.3136,-2.983) to [out=210,in=68] (-.871,-3.493);
\draw[] (2.427,1.763) to [out=208,in=2] (1.726,1.667);
\draw[] (2.427,1.763) to [out=90,in=232] (2.589,2.501);
\draw[] (2.741,1.220) to [out=222,in=58] (2.307,.661);
\draw[] (2.741,1.220) to [out=330,in=188] (3.460,.992);
\draw[] (-2.427,1.763) to [out=342,in=178] (-1.726,1.667);
\draw[] (-2.427,1.763) to [out=90,in=308] (-2.589,2.501);
\draw[] (-2.741,1.220) to [out=328,in=122] (-2.307,.661);
\draw[] (-2.741,1.220) to [out=210,in=352] (-3.460,.992);
\draw[->] (1.06,-1.06) -- (2.121,-2.121);
\draw[->] (1.45,-.8375) -- (2.901,-1.675);
\draw[->] (1.25,-2.174) -- (1.2,-2.078);
\draw[->] (1.45,-2.512) -- (1.5,-2.598);
\draw[->] (1.55,-2.684) -- (1.5,-2.598);
\draw[->] (1.75,-3.03) -- (1.8,-3.117);
\node [left] at (0,0) {$0$};
\node [right] at (2.121,-2.221) {$r_*$};
\node [right] at (2.901,-1.775) {$r$};
\node [right] at (1.25,-2.274) {$b$};
\node [right] at (1.55,-2.784) {$b$};
\node [] at (4,1.4) {$\theta_{r_*}$};
\node [] at (4.7,2) {$\theta_r$};
\draw [<->] (3.06,1.362) arc [radius=3.35, start angle=24, end angle=17.63];
\draw [] (0,0) -- (3.06,1.362);
\draw [thin] (3.25,2.635)--(3.11,3.185);
\node [above] at (3.11,3.185) {$\theta_r^+-\theta_r^-\leq\frac{\nu}{r}$};
\draw [thin] (3.84,.8) arc [radius=.8, start angle=30, end angle=90];
\node [right, below] at (4.24,.8) {$\hat{c}\ln{\frac{r}{r_*}}$};
\node [below] at (0,3.) {$\mathscr{S}_{r_*,\beta}$};
\end{tikzpicture}
\end{center}
\caption{The Lipschitz like property of the map $r\rightarrow\theta_r$ and the set $\mathscr{S}_{r_*,\beta}$\;(N=3).}
\label{holderF}
\end{figure}
\begin{corollary}
\label{esseScr}
Let $r_*\in(\bar{r},R)$, $r^*$ and $\beta\in(0,1)$ be as in Lemma \ref{rrprime}. Then, assuming that $\bar{r}$ has been chosen sufficiently large, the sets
\begin{equation}
\begin{split}
&\check{\mathscr{S}}_{r_*,\beta}=\{x(r,\theta):\\
&\frac{2\pi}{N}-\frac{3\nu}{2r_*}-\hat{c}\ln{\frac{r}{r_*}})>\theta-\theta_{r_*}
>\frac{3\nu}{2r_*}+\hat{c}\ln{\frac{r}{r_*}},
\;\;r\in[r_*,r^*)\}\\\\
&\hat{\mathscr{S}}_{r_*,\beta}=\{x(r,\theta):\\
&\frac{2\pi}{N}-\frac{\nu}{2r_*}-\frac{\nu}{r}-\hat{c}\ln{\frac{r_*}{r}}>\theta-\theta_{r_*}
>\frac{\nu}{2r_*}+\frac{\nu}{r}+\hat{c}\ln{\frac{r_*}{r}},
\;\;r\in(r_*(1-\beta),r_*]\}.
\end{split}
\label{ArcNear-1}
\end{equation}
are well defined, nonempty and
 it results
\begin{equation}
\vert u^R(x)-\omega^{j-1} a\vert\leq c\delta^\alpha\;\;,\text{for}\;x\in\omega^{j-1} \mathscr{S}_{r_*,\beta}\setminus\tilde{\Sigma},\;\;j=1,\ldots,N
\label{near-om a}
\end{equation}
where
\[\mathscr{S}_{r_*,\beta}=\hat{\mathscr{S}}_{r_*,\beta}\cup\check{\mathscr{S}}_{r_*,\beta}.\]

\end{corollary}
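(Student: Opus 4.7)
The plan is to read Corollary \ref{esseScr} as a direct consequence of Lemma \ref{rrprime} combined with the angular partition \eqref{uR-in-W*} of $u^R$ on fibers outside $\Sigma$, plus the $C_N$-equivariance. All three conclusions—nonemptiness, well-definedness, and the pointwise estimate—reduce to bookkeeping of the inequalities that define $\mathscr{S}_{r_*,\beta}$.

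First I would fix $r_*\in(\bar r,R)$ and recall that $\theta_{r_*}^\pm=\theta_{r_*}\pm\nu/(2r_*)$. For any $r\in[r_*,r^*)\setminus\Sigma$ the first line of \eqref{srprime-sr} yields
\[
\theta_r^+\leq\theta_{r_*}+\tfrac{\nu}{2r_*}+\tfrac{\nu}{r}+\hat c\ln\tfrac{r}{r_*}\leq\theta_{r_*}+\tfrac{3\nu}{2r_*}+\hat c\ln\tfrac{r}{r_*},
\qquad
\theta_r^-\geq\theta_{r_*}-\tfrac{3\nu}{2r_*}-\hat c\ln\tfrac{r}{r_*},
\]
where I used $\nu/r\leq\nu/r_*$. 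Therefore any $x=x(r,\theta)\in\check{\mathscr{S}}_{r_*,\beta}$ with $r\not\in\Sigma$ satisfies $\theta_r^+<\theta<\frac{2\pi}{N}+\theta_r^-$, and by \eqref{uR-in-W*} one concludes $u^R(x)\in B_{c\delta^\alpha}(a)$. The analogous argument with the second line of \eqref{srprime-sr}, applied to $r\in(r_*(1-\beta),r_*]\setminus\Sigma$, gives the same conclusion on $\hat{\mathscr{S}}_{r_*,\beta}$. Since $u^R$ is $C_N$-equivariant, the image sets $\omega^{j-1}\mathscr{S}_{r_*,\beta}$ inherit the same estimate centered at $\omega^{j-1}a$, proving \eqref{near-om a}.

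To verify nonemptiness and well-definedness of the sets in \eqref{ArcNear-1}, I need the lower bound on $\theta-\theta_{r_*}$ to be strictly smaller than the upper bound, i.e.\
\[
\tfrac{3\nu}{r_*}+2\hat c\ln\tfrac{r}{r_*}<\tfrac{2\pi}{N}
\quad\text{for}\;r\in[r_*,r^*),
\]
and the symmetric inequality on $(r_*(1-\beta),r_*]$. Since $r/r_*\in[1,1+\beta)$ (respectively $r_*/r\in[1,1/(1-\beta))$), the logarithmic term is bounded by $\hat c\ln(1+\beta)$, which by the choice of $\beta$ in Lemma \ref{rrprime} (namely $\beta\leq 1-e^{-\pi/(2\hat c N)}$) is already at most $\pi/(2N)$. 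It then suffices to take $\bar r$ so large that $3\nu/r_*\leq\pi/N$ for every $r_*\geq\bar r$, which enforces the strict inequality and guarantees the sets are genuine nonempty curvilinear quadrilaterals.

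No serious obstacle is expected; the work is almost entirely combining \eqref{uR-in-W*} with the two inequalities of Lemma \ref{rrprime} and picking $\bar r$ large. The only mild subtlety is making sure the bound $\nu/r\leq\nu/r_*$ is used in the right direction in each of the two regimes $r\geq r_*$ and $r\leq r_*$ so that the constants appearing in \eqref{ArcNear-1} (with the asymmetric $3\nu/(2r_*)$ on one side and $\nu/(2r_*)+\nu/r$ on the other) match Lemma \ref{rrprime} exactly.
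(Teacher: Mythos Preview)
Your argument is correct and follows essentially the same route as the paper: use \eqref{srprime-sr} from Lemma \ref{rrprime} to trap $\theta_r^\pm$ inside the bounds defining $\check{\mathscr{S}}_{r_*,\beta}$ and $\hat{\mathscr{S}}_{r_*,\beta}$, then invoke \eqref{uR-in-W*} and equivariance; for nonemptiness, use the choice of $\beta$ from Lemma \ref{rrprime} together with $\bar r$ large to force the angular window to be positive. Your write-up is in fact more explicit than the paper's, which compresses the same steps into two sentences; one minor slip is that the logarithmic term $2\hat c\ln\frac{r}{r_*}$ is bounded by $2\hat c\ln(1+\beta)\leq\pi/N$ (not $\pi/(2N)$), but this does not affect the conclusion once $\bar r$ is chosen so that $3\nu/r_*$ is strictly less than $\pi/N$.
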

\begin{proof}
The assumption on  $\beta$ implies that, provided $r_*\geq\bar{r}$ is sufficiently large, $\theta_r^+\leq\theta_r^-+\frac{\pi}{N}$ for $r\in(r_*(1-\beta),r^*)$. This means that the set $\hat{\mathscr{S}}_{r_*,\beta}$ and $\check{\mathscr{S}}_{r_*,\beta}$ are well defined and nonempty. From Lemma \ref{Sigma} and Proposition \ref{away}, $r\in(r_*(1-\beta),r^*)\setminus\Sigma$ implies $u^R(x(r,\frac{\cdot}{r}))\in\mathscr{V}_r^*$ and the inequalities \eqref{srprime-sr} in Lemma \ref{rrprime} and equivariance yield \eqref{near-om a}. The proof is complete.
\end{proof}
 The set $\mathscr{S}_{r_*,\beta}$ is illustrated in Figure \ref{holderF}. Denote $C_r$ the circumference of radius $r\in(0,R)$ and set
\begin{equation}
\hat{p}_{r_*}^\pm=x(r_*,\theta_{r_*}\pm\frac{3\nu}{2 r_*}).
 \label{hatp}
 \end{equation}
 Observe that $\hat{p}_{r_*}^+$ and $\omega\hat{p}_{r_*}^-$ are the extreme of the arc $\hat{\mathscr{S}}_{r_*,\beta}\cap\check{\mathscr{S}}_{r_*,\beta}$.

 Next we introduce a representation of $\partial\mathscr{S}_{r_*,\beta}$. We focus on the
 connected component of $\partial\mathscr{S}_{r_*,\beta}$ that contains $\hat{p}_{r_*}^+$. Analogous definitions
 apply to the component that contains $\omega\hat{p}_{r_*}^-$.

 Set
 \begin{equation}
 \begin{split}
 &\check{g}_{r_*}(r)=x(r,\check{\vartheta}_{r_*}(r)),\;\;\;\check{\vartheta}_{r_*}(r)
 =\theta_{r_*}+\frac{3\nu}{2r_*}+\hat{c}\ln{\frac{r}{r_*}},\;\;r\in[r_*,r^*),\\
 &\hat{g}_{r_*}(r)=x(r,\hat{\vartheta}_{r_*}(r)),\;\;\;\hat{\vartheta}_{r_*}(r)
 =\theta_{r_*}+\frac{\nu}{2r_*}+\frac{\nu}{r}+\hat{c}\ln{\frac{r_*}{r}},\;\;r\in((1-\beta)r_*,r^*].
 \end{split}
 \label{partial S}
 \end{equation}

 Note that $\check{g}_{r_*}(r_*)=\hat{g}_{r_*}(r_*)=\hat{p}_{r_*}^+$ and that $\check{g}_{r_*}([r_*,r^*))$ coincides with the connected component of $\check{\mathscr{S}}_{r_*,\beta}\cap\mathscr{S}_{r_*,\beta}$ that contains $\hat{p}_{r_*}^+$. Similarly $\hat{g}_{r_*}((1-\beta)r_*,r_*])$ coincides with the connected component of $\hat{\mathscr{S}}_{r_*,\beta}\cap\mathscr{S}_{r_*,\beta}$ that contains $\hat{p}_{r_*}^+$.
\begin{definition}\label{def}
  Let $\ell_{r_*}^+$ be the half line which has origin at $\hat{p}_{r_*}^+$, forms equal angles with the two tangents to $\partial\hat{\mathscr{S}}_{r_*,\beta}$ at $\hat{p}_{r_*}^+$ and points inside $\hat{\mathscr{S}}_{r_*,\beta}$. We let $\ell_{r_*}^-$ the analogous half line which has origin in $\hat{p}_{r_*}^-$ and points inside $\omega^{-1}\hat{\mathscr{S}}_{r_*,\beta}$.
\end{definition}

 For $x,y\in\R^2$ we let $[x,y]$ be the segment with extreme at $x,y$.
 If $x$ and $y$ satisfy $r(x)=r(y)$, that is: $x$ and $y$ belong to the same circumference, we let $\mathrm{arc}(x,y)$ denote the shortest arc of $C_r$ with extreme $x$ and $y$ and $d^\circ(x,y)$ be the length of
$\mathrm{arc}(x,y)$. If $S\subset\R^2$ and $S\cap C_{r(x)}\neq\emptyset$ we set
\begin{equation}
d^\circ(x,S)=\inf_{y\in S\cap C_{r(x)}}d^\circ(x,y).
\label{d-zero}
\end{equation}

\begin{lemma}
\label{ball-exists}
Let $\bar{r}$, $r_*\geq\bar{r}$ and $\beta$ as in Corollary \ref{esseScr}. Then there exist $\epsilon\in(0,1)$, $\eta\in(0,1)$ such that, if $\bar{r}$ is sufficiently large,
\begin{enumerate}
\item
\[\begin{split}
&x\in\check{\mathscr{S}}_{r_*,\beta},\;\;\;l\in(0,\epsilon r_*]\;\;\;\text{and}\;\;\;d(x,\partial\mathscr{S}_{r_*,\beta})\geq l,\\\\
&\Rightarrow\;\;B_{\eta l}(x)\subset\mathscr{S}_{r_*,\beta}.
\end{split}\]
The same conclusion is valid if the condition $d(x,\partial\mathscr{S}_{r_*,\beta})\geq l$ is replaced by
\[r^*-r(x)\geq l,\;\;\text{and}\;\;d^\circ(x,\partial\mathscr{S}_{r_*,\beta})\geq l,\]
where, as before, $r^*=\min\{(1+\beta)r_*,R\}$.
\item
\[\begin{split}
&\rho\in(0,\epsilon r_*]\;\;\;\Rightarrow\\\\
&B_{\frac{\rho}{2}}(\tilde{p}^+(\rho))\subset\hat{\mathscr{S}}_{r_*,\beta},\\
&B_{\frac{\rho}{2}}(\tilde{p}^-(\rho))\subset\omega^{-1}\hat{\mathscr{S}}_{r_*,\beta}.
\end{split}\]
where $\tilde{p}^\pm(\rho)\in\ell_{r_*}^\pm\cap C_{r_*-\rho}$  satisfies $\lim_{\rho\rightarrow 0}\tilde{p}^\pm(\rho)=\hat{p}_{r_*}^\pm$.
\end{enumerate}
\end{lemma}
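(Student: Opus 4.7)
The plan is to exploit that, for $r_*\geq\bar r$ with $\bar r$ large, the two boundary curves $\check g_{r_*}$ and $\hat g_{r_*}$ have tangent vectors with bounded slope in the polar frame $(e_r,e_\theta)$ and curvatures of order $1/r$, so that near each of the two corners $\hat p_{r_*}^+$ and $\omega\hat p_{r_*}^-$ the boundary $\partial\mathscr S_{r_*,\beta}$ is uniformly close to its pair of tangent lines. From \eqref{partial S} I would first record
\[
\frac{d\check\vartheta_{r_*}}{dr}=\frac{\hat c}{r},\qquad \frac{d\hat\vartheta_{r_*}}{dr}=-\frac{\nu}{r^2}-\frac{\hat c}{r},
\]
so that the tangent to $\check g_{r_*}$ in the basis $(e_r,e_\theta)$ is proportional to $(1,\hat c)$ and the one to $\hat g_{r_*}$ to $(1,-\hat c-\nu/r)$, with signed curvatures $O(1/r)$.

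For the first statement of part (i) I would use that $\mathscr S_{r_*,\beta}$ is open; by \eqref{no-cap} its angular window does not wrap around, so $\mathscr S_{r_*,\beta}$ is simply connected. Hence for any $x\in\mathscr S_{r_*,\beta}$ the open ball $B_{d(x,\partial\mathscr S_{r_*,\beta})}(x)$ is connected, disjoint from $\partial\mathscr S_{r_*,\beta}$, and therefore entirely contained in $\mathscr S_{r_*,\beta}$; even $\eta=1$ works. For the arc-distance version the argument is analytic: given $x$ with $r^*-r(x)\geq l$ and $d^\circ(x,\partial\mathscr S_{r_*,\beta})\geq l$, a candidate perturbation $x+v$ with $|v|\leq\eta l$ decomposes as $v=v_re_r+v_\theta e_\theta$, and using the slope bounds above one checks that $x+v$ remains in $\mathscr S_{r_*,\beta}$ provided $\eta\leq 1/(1+\hat c)$, after $\bar r$ has been chosen large enough to absorb the $\nu/r$ correction.

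The heart of the proof is part (ii), which reduces to a corner computation at $\hat p_{r_*}^+$. Setting $\hat c':=\hat c+\nu/r_*$, the two tangent rays to $\partial\hat{\mathscr S}_{r_*,\beta}$ pointing into $\hat{\mathscr S}_{r_*,\beta}$ are $e_\theta$ (along $C_{r_*}$) and $-e_r+\hat c'\,e_\theta$ (along $\hat g_{r_*}$ with $r$ decreasing). Measured from $-e_r$ they make angles $\pi/2$ and $\arctan\hat c'$, so the opening angle is $\phi=\pi/2-\arctan\hat c'$ and its bisector $\ell_{r_*}^+$ makes angle $\alpha_0=\pi/4+\tfrac{1}{2}\arctan\hat c'$ with $-e_r$. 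A direct calculation then gives that $\tilde p^+(\rho)\in\ell_{r_*}^+\cap C_{r_*-\rho}$ lies at Euclidean distance $\rho/\cos\alpha_0$ from $\hat p_{r_*}^+$ along $\ell_{r_*}^+$, and hence at distance $(\rho/\cos\alpha_0)\sin(\phi/2)$ from each of the two tangent rays. The identity $\sin(\pi/4-x)=\cos(\pi/4+x)$ then gives the clean value
\[
\frac{\sin(\phi/2)}{\cos\alpha_0}=1,
\]
so this distance is \emph{exactly} $\rho$, independently of $\hat c'$. Since each boundary piece deviates from its tangent by $O(\rho^2/r_*)=O(\epsilon\rho)$ over a span $\rho$, for $\epsilon$ small and $\bar r$ large the ball $B_{\rho/2}(\tilde p^+(\rho))$ stays at distance $\geq\rho/2-O(\epsilon\rho)>0$ from $\partial\hat{\mathscr S}_{r_*,\beta}$, and its radial range $[r_*-3\rho/2,r_*-\rho/2]$ lies in $((1-\beta)r_*,r_*]$ once $\epsilon\leq 2\beta/3$. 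Convergence $\tilde p^+(\rho)\to\hat p_{r_*}^+$ as $\rho\to 0^+$ is immediate, and the statement for $\tilde p^-(\rho)$ follows by the analogous computation at $\omega\hat p_{r_*}^-$ combined with $C_N$-equivariance.

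The main obstacle is ensuring that the constant $1/2$ in the radius $\rho/2$ works uniformly in $\hat c$; the miracle that makes this possible is precisely the identity $\sin(\phi/2)/\cos\alpha_0=1$ just noted, which reflects the fact that the bisector $\ell_{r_*}^+$ of the corner always keeps $\tilde p^+(\rho)$ at distance exactly $\rho$ from both tangent rays. All remaining estimates are elementary once the tangent and curvature computations above are in hand.
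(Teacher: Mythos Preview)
Your proof is correct and follows the same bisector-and-corner geometry as the paper for part~(ii): the paper merely asserts that the bisector property gives $d(\tilde p^+(\rho),C_{r_*})\simeq d(\tilde p^+(\rho),\partial\hat{\mathscr S}_{r_*,\beta}\setminus C_{r_*})\simeq\rho$, whereas you make this precise via the identity $\sin(\phi/2)=\cos\alpha_0$ in the linearized frame and then absorb the $O(\rho^2/r_*)$ curvature error. For part~(i) the paper gives no argument at all, so your observation that the Euclidean-distance clause is a triviality about open sets, together with the polar-slope computation $\eta\le 1/(1+\hat c)$ for the arc-distance clause, actually supplies what the paper omits.
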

\begin{proof}
To prove (ii) we observe that, since,  by definition, $\ell_{r_*}^+$ forms equal angles with the two tangents to
$\partial\hat{\mathscr{S}}_{r_*,\beta}$ at $\hat{p}_{r_*}^+$, we have approximately
\[d(\tilde{p}^+(\rho),C_{r_*})\simeq d(\tilde{p}^+(\rho),\partial\hat{\mathscr{S}}_{r_*,\beta}\setminus C_{r_*})\simeq\rho.\]
This and the analogous statement for $\tilde{p}^-(\rho)$, under the standing assumption that $\epsilon>0$ is small, imply (ii). The proof is complete.
\end{proof}

\begin{remark}
\label{remark}
Let $\varphi_*$ the angle between the two lines tangent to $\hat{\mathscr{S}}_{r_*,\beta}$ at  $\hat{p}_{r_*}^+$. Then
\begin{equation}
\sin{\varphi_*}=\frac{1}{\sqrt{{1+(\hat{c}+\frac{\nu}{r_*})^2}}}.
\label{phi*}
\end{equation}
Fix a small number $\epsilon>0$ and, for $r\in((1-\epsilon)r_*,r_*]$ set $\vartheta(r)=\theta-\theta_{r_*}=\frac{\nu}{2r_*}+\frac{\nu}{r}+\hat{c}\ln{\frac{r_*}{r}}$. Let $0\xi_1\xi_2$ the positive frame determined by the assumption that the $\xi_1$ axis coincides with the ray $L_{\theta_{r_*}}$. Then the definition of $\hat{\mathscr{S}}_{r_*,\beta}$ in Corollary \ref{esseScr} implies that
$(\xi_1(r),\xi_2(r))=(r\cos{\vartheta},r\sin{\vartheta})$, $r\in((1-\epsilon)r_*,r_*]$ is a local representation of
$\partial\hat{\mathscr{S}}_{r_*,\beta}\setminus C_{r_*}$ and $(\xi_1(r_*),\xi_2(r_*))=\hat{p}_{r_*}^+$. Let $\varphi(r)$ the angle that the vector $(\xi_1^\prime(r),\xi_2^\prime(r))$ tangent to $\hat{\mathscr{S}}_{r_*,\beta}$ at $(\xi_1(r),\xi_2(r))$ forms with the vector $(\sin{\vartheta},-\cos{\vartheta})$ tangent to $C_r$ at $(\xi_1(r),\xi_2(r))$. It results
\[\sin{\varphi(r)}=\frac{1}{\sqrt{{1+(r\vartheta^\prime(r))^2}}}.\]
This, $\vartheta^\prime(r)=-\frac{\hat{c}}{r}-\frac{\nu}{r^2}$ and  $r=r_*$ imply \eqref{phi*}.
\end{remark}
\subsection{Construction of the diffuse interface}\label{diff-int}
The properties of the map $r\rightarrow\theta_r$ discussed above allow for the construction of  a set that can be regarded as the diffuse interface that divide $B_R$ in regions where $u^R$ is near to one or another of the zeros of $W$. We will be able to define the length of the interface and by means of Theorem \ref{uR<} show that the energy of $u^R$ is essentially contained in the interface and proportional to its length. This and the upper bound \eqref{UB} for the energy of $u^R$ allow to control the length and by consequence the shape of the interface leading to the proof of Theorem \ref{Th2}.

We now consider a sequences of balls $B_{R_n}$ defined by
\[R_n=(n+1+c_1)^2,\]
where $c_1>0$ is a constant and let $u^{R_n}$ a $C_N$-equivariant minimizer of $J_{B_{R_n}}$.
Let $\tilde{c}>0$ and $\mu_j\in[0,\vert\Sigma\vert]$, $j=1,\ldots,n+1$ be constants to be chosen later and define
\begin{equation}
\left.\begin{array}{l}
r_j=(j+c_1)^2-\mu_j,\\\\
\lambda_j=\tilde{c}\ln{r_j},
\end{array}\right.
j=1,\ldots,n+1,
\label{lambda rj}
\end{equation}
where $c_1$ is chosen sufficiently large to ensure
\[r_1\geq\max\{r_\delta,2\bar{l}+\bar{r}\},\]
$r_\delta$ as in Lemma \ref{Sigma} and  $\bar{l}$ and $\bar{r}$ as in Theorem \ref{uR<}. From Lemma \ref{Sigma} we can assume that, for each $j$, the constant $\mu_j\in[0,\vert\Sigma\vert]$ is such that
\[r_j\in(0,R_n)\setminus\Sigma,\;\;j=1,\ldots,n+1.\]

\begin{lemma}\label{1:2}
The sequences $\{r_j\}_{j=1}^{n+1}$ and $\{\lambda_j\}_{j=1}^{n+1}$ satisfy
\begin{enumerate}
\item
\begin{equation}
\begin{split}
&\lim_{j\rightarrow+\infty}r_{j+1}-r_j=+\infty,\quad\quad\quad\quad\\\\
&\lim_{j\rightarrow+\infty}\frac{r_{j+1}-r_j}{r_j}=0,\quad\quad\quad\quad\\\\
&\lim_{r\rightarrow+\infty}\frac{\lambda_j}{r_{j+1}-r_j}=0.\quad\quad\quad\quad
\end{split}
\label{sequence}
\end{equation}
\item
If $c_1>0$ is sufficiently large there exist constants $c_0,C^0>0$ such that
\begin{equation}
 c_0r_j^\frac{1}{2}\leq r_{j+1}-r_j\leq C^0r_j^\frac{1}{2},\;\;j=1,\ldots,n+1,
 \label{.5}
 \end{equation}
 and we can assume
\begin{equation}
\left.\begin{array}{l}
\beta_j=\frac{r_{j+1}-r_j}{r_j}\leq\frac{\beta}{2},\\
\frac{2\lambda_j}{r_{j+1}-r_j}<1.
\end{array}\right.
j=1,\ldots,n+1,
\label{r-r<beta}
\end{equation}
where $\beta$ is as in Lemma \ref{rrprime}.
 \end{enumerate}
\end{lemma}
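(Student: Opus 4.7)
The plan is purely computational: expand $r_{j+1}-r_j$ using the definition, exploit that $|\Sigma|$ from Lemma \ref{Sigma} is a fixed constant independent of $R$ and $j$, and then tune $c_1$ at the end.

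First I would write
\[
r_{j+1}-r_j = (j+1+c_1)^2-(j+c_1)^2-(\mu_{j+1}-\mu_j) = 2(j+c_1)+1-(\mu_{j+1}-\mu_j),
\]
and note that since $\mu_j,\mu_{j+1}\in[0,|\Sigma|]$, we have the two-sided bound
\[
2(j+c_1)+1-|\Sigma|\ \leq\ r_{j+1}-r_j\ \leq\ 2(j+c_1)+1+|\Sigma|.
\]
From this, part (i) is immediate: the middle quantity $2(j+c_1)+1$ tends to $+\infty$, giving $r_{j+1}-r_j\to+\infty$; dividing by $r_j=(j+c_1)^2-\mu_j$ (which is comparable to $j^2$) gives $(r_{j+1}-r_j)/r_j=O(1/j)\to 0$; and since $\lambda_j=\tilde{c}\ln r_j=O(\ln j)$ while $r_{j+1}-r_j=\Theta(j)$, the ratio $\lambda_j/(r_{j+1}-r_j)=O((\ln j)/j)\to 0$.

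For part (ii) I would fix $c_1$ large enough so that $(j+c_1)^2\geq 2|\Sigma|$ for all $j\geq 1$; this forces $r_j\geq(j+c_1)^2/2$, hence
\[
\frac{j+c_1}{\sqrt{2}}\ \leq\ r_j^{1/2}\ \leq\ j+c_1,
\]
and combining with the two-sided bound on $r_{j+1}-r_j$ yields
\[
c_0\,r_j^{1/2}\ \leq\ r_{j+1}-r_j\ \leq\ C^0\,r_j^{1/2}
\]
with, say, $c_0=1$ (after enlarging $c_1$ so the $|\Sigma|$ and $1$ terms are absorbed) and $C^0=2\sqrt{2}+1$. The bound $\beta_j\leq\beta/2$ then follows from $\beta_j\leq C^0/r_j^{1/2}\leq C^0/((1+c_1)/\sqrt{2})$, which is $\leq\beta/2$ once $c_1$ is taken large. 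Finally, $2\lambda_j/(r_{j+1}-r_j)\leq 2\tilde{c}\ln r_j/(c_0 r_j^{1/2})$ tends to $0$ as $j\to\infty$, and is maximized for $j=1$ among the relevant indices once $c_1$ is large enough that $r\mapsto(\ln r)/r^{1/2}$ is decreasing on $[r_1,\infty)$; hence enlarging $c_1$ one more time makes it less than $1$ uniformly in $j$.

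No genuine obstacle is anticipated: the only subtlety is that all three enlargements of $c_1$ (to control $|\Sigma|$, to get $\beta_j\leq\beta/2$, and to get $2\lambda_j/(r_{j+1}-r_j)<1$) must be carried out after $|\Sigma|$, $\beta$, and $\tilde{c}$ have been fixed by the preceding lemmas, which is legitimate because none of those constants depends on $c_1$ or on $R$.
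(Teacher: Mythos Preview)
Your proposal is correct and is precisely the elementary computation the paper has in mind; the paper's own proof consists of the single sentence ``The proof is an elementary computation.'' One small inaccuracy in your closing remark: $\tilde{c}$ is not fixed by preceding lemmas but is chosen later (see \eqref{tildec}); what matters, and what your argument actually uses, is only that $\tilde{c}$ does not depend on $c_1$, which is indeed the case.
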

\begin{proof}
The proof is a an elementary computation.
\end{proof}

\begin{figure}
  \begin{center}
\begin{tikzpicture}[scale=.75]

\path [fill=lightgray] (0,0) arc [radius=10, start angle=90, end angle= 82]
to (4.788,3.155)
arc [radius=14, start angle=70, end angle=110]
to (-1.392,-.0973)
arc [radius=10, start angle=98, end angle=90];

\draw [thin] (0,0) arc [radius=10, start angle=90, end angle= 65];
\draw [thin] (0,4) arc [radius=14, start angle=90, end angle= 65];
\draw [thin] (0,0) arc [radius=10, start angle=90, end angle= 115];
\draw [thin] (0,4) arc [radius=14, start angle=90, end angle= 115];

\draw[] (1.392,-.0973)--(4.788,3.155);
\draw[] (-1.392,-.0973)--(-4.788,3.155);

\draw[->] (.523,-.0137) to [out=67,in=212] (3.149,3.641);
\draw[->] (-.523,-.0137) to [out=113,in=328] (-3.149,3.641);

\draw[thin] (0,-1)--(0,5.5);
\draw[dashed] (.523,-.0137)--(.7327,3.981);
\draw[dashed] (2.249,-.2563)--(3.149,3.641);
\draw[dashed] (2.588,-.341)--(3.623,3.523);
\draw[dashed] (3.42,-.603)--(4.788,3.155);

\draw[thin] (0,0)--(0,4);
\draw[dashed] (-.523,-.0137)--(-.7327,3.981);
\draw[dashed] (-2.249,-.2563)--(-3.149,3.641);
\draw[dashed] (-2.588,-.341)--(-3.623,3.523);
\draw[dashed] (-3.42,-.603)--(-4.788,3.155);

\draw[dashed] (1.392,-.0973)--(1.531,.893);
\draw[dashed] (-1.392,-.0973)--(-1.531,.893);


\path[fill=red] (.523,-.0137) circle [radius=.045];
\path[fill=red] (1.392,-.0973) circle [radius=.045];

\path[fill=red] (-.523,-.0137) circle [radius=.045];
\path[fill=red] (-1.392,-.0973) circle [radius=.045];

\path[fill=red] (.7327,3.981) circle [radius=.045];
\path[fill=red] (3.149,3.641) circle [radius=.045];
\path[fill=red] (3.623,3.523) circle [radius=.045];
\path[fill=red] (4.788,3.155) circle [radius=.045];

\path[fill=red] (-.7327,3.981) circle [radius=.045];
\path[fill=red]  (-3.149,3.641) circle [radius=.045];
\path[fill=red] (-3.623,3.523)  circle [radius=.045];
\path[fill=red] (-4.788,3.155) circle [radius=.045];


\node[right] at (0,5.5) {$L_{\theta_{r_j}}$};
\node[right] at (4.2,-1.2) {$r_j$};
\node[right] at (5.8,2.5) {$r_{j+1}$};
\node[left] at (-4.788,3.155) {$q_{j+1}^+$};
\node[right] at (4.788,3.155) {$q_{j+1}^-$};
\node[left] at (-4.788,3.155) {$q_{j+1}^+$};
\node[right] at (4.788,3.155) {$q_{j+1}^-$};
\node[right] at (1.422,-.1) {$p_j^-$};
\node[left] at (-1.422,-.1) {$p_j^+$};
\node[below] at (.523,.1537) {$\hat{p}_j^-$};
\node[below] at (-.523,.1537) {$\hat{p}_j^+$};
\node[above]at (-1.3,2) {$\mathscr{I}_j$};

\draw[->] (.5,4.2)--(.35,4.);
\node[above] at (.6,4.) {$\frac{3\nu}{2r_j}$};
\draw[->] (2.,4.1)--(2.,3.85);
\node[above] at (2.,3.9) {$\hat{c}\ln{\frac{r_{j+1}}{r_j}}$};
\draw[->] (3.4,3.8)--(3.4,3.6);
\node[above] at (3.4,3.7) {$\frac{\nu}{r_{j+1}}$};
\draw[->] (4.3,3.6)--(4.2,3.4);
\node[right] at (4.2,3.7) {$\frac{\lambda_{j+1}}{r_{j+1}}$};
\draw[->] (1.,-.35)--(1.,-.1);
\node[below] at (1.27,.0) {$\frac{\lambda_j}{r_j}$};
\end{tikzpicture}
\end{center}
\caption{The points $\hat{p}_j^\pm, p_j^\pm, q_{j+1}^\pm$ and the set $\mathscr{I}_j$.}
\label{Ij}
\end{figure}
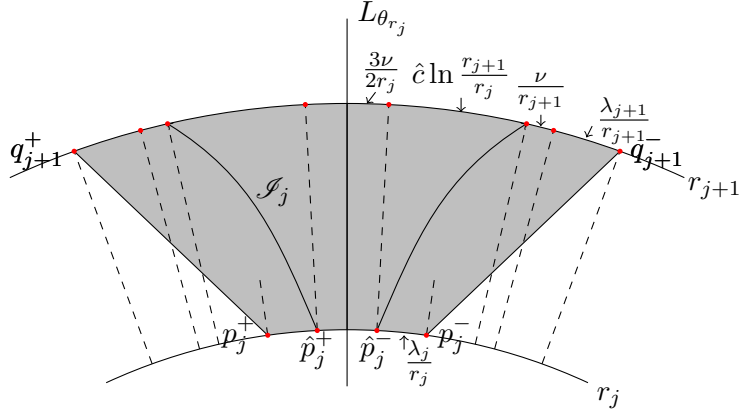

Define (see Figure \ref{Ij}):
\begin{equation}
\left.\begin{array}{l}
p_j^\pm=x(r_j,\theta_{r_j}\pm\frac{3\nu}{2r_j}\pm\frac{\lambda_j}{r_j}),\\\\
q_j^\pm=x(r_j,\theta_{r_{j-1}}\pm\frac{3\nu}{2r_{j-1}}
\pm\frac{\nu}{r_j}\pm\hat{c}\ln{\frac{r_j}{r_{j-1}}}\pm\frac{\lambda_j}{r_j}),
\end{array}\right.
\;\;j=1,\ldots,n+1,
\label{PeQ}
\end{equation}
and set $\hat{p}_j^\pm=\hat{p}_{r_j}^\pm$ where $\hat{p}_{r_j}^\pm$ is defined in \eqref{hatp}.
We have
\begin{equation}
\begin{split}
&\theta(p_j^+)-\theta(\hat{p}_j^+)=\frac{\lambda_j}{r_j},\\
&\theta(q_j^+)-\theta(p_j^+)\geq\frac{\nu}{r_{j-1}}-\frac{\nu}{r_j}>0.
\end{split}
\label{pq-in}
\end{equation}
The first equation is obvious, \eqref{pq-in}$_2$ follows from the definition \eqref{PeQ} and \eqref{srprime-sr}$_1$ in Lemma \ref{rrprime} with $r_*=r_{j-1}$ and $r=r_j$ which implies
\[\theta_{r_j}+\frac{\nu}{2r_j}
\leq\theta_{r_{j-1}}+\frac{\nu}{2r_{j-1}}+\frac{\nu}{r_j}+\hat{c}\ln{\frac{r_j}{r_{j-1}}}.\]

\begin{lemma}
\label{balls-}
Set $a_-=\omega^{-1}a$ and $a_+=a$. Then
\begin{enumerate}
\item
There is a constant $\eta\in(0,1)$ independent of $n$ such that
\begin{equation}
\left.
\begin{array}{l}
x\in\mathrm{arc}[p_j^\pm,q_j^\pm]\;\;\Rightarrow\\\\
\vert u^{R_n}-a_\pm\vert\leq c\delta^\alpha,\;\;\text{on}\;\;
B_{\eta\lambda_j}(x)\setminus\tilde{\Sigma},
\end{array}
\right.
j=2,\ldots,n,
\label{arc-in}
\end{equation}
\item
\begin{equation}
\left.
\begin{array}{l}
x\in[p_j^\pm,q_{j+1}^\pm]\;\;\;\Rightarrow\\\\
\vert u^{R_n}-a_\pm\vert\leq c\delta^\alpha,\;\;\text{on}\;\;
B_{\eta\lambda_j}(x)\setminus\tilde{\Sigma},
\end{array}
\right.
j=1,\ldots,n-1.
\label{segm-in}
\end{equation}
\end{enumerate}
\end{lemma}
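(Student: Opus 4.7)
The strategy is to verify that the arcs $\mathrm{arc}[p_j^\pm, q_j^\pm]$ and segments $[p_j^\pm, q_{j+1}^\pm]$ sit well inside $\mathscr{S}_{r_j,\beta}$ (or its rotated image $\omega^{-1}\mathscr{S}_{r_j,\beta}$ for the ``$-$'' case), at distance at least $\lambda_j$ from $\partial\mathscr{S}_{r_j,\beta}$; then to invoke Lemma \ref{ball-exists} to place a ball of radius $\eta\lambda_j$ inside the set, and finally to apply Corollary \ref{esseScr} to conclude closeness to $a_\pm$.

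For part (i) I focus on ``$+$''; the ``$-$'' case follows by equivariance. The point $p_j^+$ lies on $C_{r_j}$ at angular displacement $\frac{\lambda_j}{r_j}$ from the corner $\hat{p}_j^+$, hence at arc distance $\lambda_j$ inside $\check{\mathscr{S}}_{r_j,\beta}\cap C_{r_j}$. Using \eqref{srprime-sr}$_1$ with $r_*=r_{j-1}$, $r=r_j$, which yields
\[
\theta_{r_j}+\tfrac{3\nu}{2r_j}\leq\theta_{r_{j-1}}+\tfrac{\nu}{2r_{j-1}}+\tfrac{2\nu}{r_j}+\hat{c}\ln\tfrac{r_j}{r_{j-1}},
\]
I will check that $q_j^+$ also lies in $\check{\mathscr{S}}_{r_j,\beta}\cap C_{r_j}$, at angular distance at least $\frac{\nu(r_j-r_{j-1})}{r_{j-1}r_j}+\frac{\lambda_j}{r_j}$ from $\hat{p}_j^+$. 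Hence every point of $\mathrm{arc}[p_j^+,q_j^+]$ is at arc distance at least $\lambda_j$ from the nearest corner, while the opposite corner $\omega\hat{p}_{r_j}^-$ lies at arc distance of order $\frac{2\pi r_j}{N}$, which dominates $\lambda_j=\tilde{c}\ln r_j$. The circumferential version of Lemma \ref{ball-exists}(i), applicable since $r^*-r_j\geq\beta r_j\geq\lambda_j$ for $r_j$ large, then yields $B_{\eta\lambda_j}(x)\subset\mathscr{S}_{r_j,\beta}$ for every $x$ on the arc, and Corollary \ref{esseScr} delivers $|u^{R_n}-a|\leq c\delta^\alpha$ on $B_{\eta\lambda_j}(x)\setminus\tilde{\Sigma}$.

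For part (ii) I again handle ``$+$''. Both endpoints $p_j^+$ and $q_{j+1}^+$ lie in $\check{\mathscr{S}}_{r_j,\beta}$: for $q_{j+1}^+$ this is immediate since its angular coordinate exceeds $\check{\vartheta}_{r_j}(r_{j+1})$ by $\frac{\nu+\lambda_{j+1}}{r_{j+1}}$. Parameterising the straight segment by $r\in[r_j,r_{j+1}]$, its polar angle $\theta_{\mathrm{seg}}(r)$ is to leading order affine in $r$, whereas the boundary angle $\check{\vartheta}_{r_j}(r)=\theta_{r_j}+\frac{3\nu}{2r_j}+\hat c\ln\frac{r}{r_j}$ is concave in $r$. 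Hence the clearance $\theta_{\mathrm{seg}}(r)-\check{\vartheta}_{r_j}(r)$ is bounded below by the linear interpolation of its endpoint values, giving
\[
\theta_{\mathrm{seg}}(r)-\check{\vartheta}_{r_j}(r)\geq \min\bigl\{\tfrac{\lambda_j}{r_j},\tfrac{\nu+\lambda_{j+1}}{r_{j+1}}\bigr\}.
\]
Multiplying by the ambient radius yields an arc clearance, and hence a Euclidean distance, of order $\lambda_j$ (invoking \eqref{.5} and the comparability of consecutive $\lambda_j$'s). The opposite boundary of $\mathscr{S}_{r_j,\beta}$ lies at distance of order $r_j\gg\lambda_j$. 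Lemma \ref{ball-exists}(i) and Corollary \ref{esseScr} then conclude, after absorbing multiplicative constants into $\eta$.

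The main obstacle is the bookkeeping required to convert the two-sided bounds of Lemma \ref{rrprime}---relating $\theta_{r_{j-1}}$, $\theta_{r_j}$, $\theta_{r_{j+1}}$ to the prescribed angular coordinates of $p_j^\pm, q_j^\pm, q_{j+1}^\pm$---into clean geometric inclusions, in particular checking that both endpoints of each arc or segment sit in the \emph{same} set $\mathscr{S}_{r_j,\beta}$ rather than being split between $\mathscr{S}_{r_{j-1},\beta}$ and $\mathscr{S}_{r_j,\beta}$. The conditions $\beta_j\leq\beta/2$ and $\frac{2\lambda_j}{r_{j+1}-r_j}<1$ from \eqref{r-r<beta}, combined with the concavity of the logarithmic boundary curve, are precisely what prevent the segment from straying below the boundary.
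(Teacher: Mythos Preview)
Your overall strategy---show that the arcs and segments sit at arc-distance $\geq\lambda_j$ from $\partial\mathscr{S}_{r_j,\beta}$, then invoke Lemma~\ref{ball-exists} and Corollary~\ref{esseScr}---is exactly the paper's, and your treatment of part~(i) matches it: the key inputs are \eqref{pq-in} (which you recover) together with the radial clearance $r_j^*-r_j\geq r_{j+1}-r_j>\lambda_j$.

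In part~(ii), however, your concavity step is inverted. You write that $\theta_{\mathrm{seg}}$ is to leading order affine while $\check{\vartheta}_{r_j}$ is concave, and conclude that the clearance $\theta_{\mathrm{seg}}-\check{\vartheta}_{r_j}$ is bounded \emph{below} by the linear interpolation of its endpoint values. But affine minus concave is convex, and a convex function lies \emph{below} its secant, not above; so your stated reasoning gives only an upper bound on the clearance, which is useless. Put differently, the concavity of $\check{\vartheta}_{r_j}(r)$ means the logarithmic boundary bulges \emph{toward} larger $\theta$, so in principle it could rise above a straight chord whose endpoints clear it.

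The paper sidesteps this by working with the planar curve rather than the scalar function. It introduces the shifted spiral $\tilde g(r)=x(r,\check{\vartheta}_{r_j}(r)+\lambda_j/r_j)$, notes that $\tilde g(r_j)=p_j^+$ and $\theta(q_{j+1}^+)>\theta(\tilde g(r_{j+1}))$, and then uses that the spiral $\tilde g$, as a curve in $\mathbb{R}^2$, turns its concavity toward increasing $\theta$ (so any chord of $\tilde g$ lies at larger $\theta$ than the curve). This is a statement about the signed curvature of the logarithmic spiral in Cartesian coordinates, not about the second derivative of $\check{\vartheta}_{r_j}$ in $r$; the two notions go in opposite directions here. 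Once the segment is known to lie to the left of $\tilde g$, the identity $d^\circ(\tilde g(r),\check g_{r_j}(r))=\frac{r}{r_j}\lambda_j\geq\lambda_j$ gives the required arc-distance bound immediately. Your argument can be repaired either by adopting this device or by showing directly that $\theta_{\mathrm{seg}}$ is itself concave (indeed more concave than $\check{\vartheta}_{r_j}$) in the relevant asymptotic regime---but that is a different computation from the one you wrote.
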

\begin{proof}
From \eqref{pq-in} and \eqref{d-zero} it follows
\begin{equation}
d^\circ(x,\partial\mathscr{S}_{r_j,\beta})\geq\lambda_j,\;\;x\in\mathrm{arc}(p_j^+,q_j^+),\;j=2,\ldots,n.
\label{pq-in1}
\end{equation}
Set $r_j^*=\min\{(1+\beta)r_j,R_n\}$. Then we have
\begin{equation}
r_j^*\geq r_{j+1},\;\;j=2,\ldots,n.
\label{r*large}
\end{equation}
This is obvious if $r_j^*=R_n$ and follows from assumption \eqref{r-r<beta}$_1$ if $r_j^*=(1+\beta)r_j$.
From \eqref{r*large} and \eqref{r-r<beta}$_2$ it follows
\begin{equation}
r_j^*-r(x)\geq r_{j+1}-r_j\geq\lambda_j,\;\;j=2,\ldots,n.
\label{r*-rj}
\end{equation}
This and \eqref{pq-in1} imply that we can apply Lemma \ref{ball-exists} and conclude that $B_{\eta\lambda_j}(x)\subset\mathscr{S}_{r_j,\beta}$ and \eqref{arc-in}$_1$ follows from \eqref{near-om a} in Corollary \ref{esseScr}. The proof of \eqref{arc-in}$_2$ is similar.

Proof of (ii). Let $\check{g}_{r_j}$ and $\check{\vartheta}_{r_j}$ be defined as in \eqref{partial S} with $r_*=r_j$. The curve $[r_j,r_{j+1}]\ni r\rightarrow \check{g}_{r_j}(r)$ is a parametrization of the connected component of
$\partial\mathscr{S}_{r_j,\beta}\cap\{x(r):r\in[r_j,r_{j+1}]\}$ that contains $\hat{p}_j^+$. The curve $[r_j,r_{j+1}]\ni r\rightarrow \tilde{g}(r)=x(r,\check{\vartheta}_{r_j}(r)+\frac{\lambda_j}{r_j})$ lies on the left of $\check{g}_{r_j}([r_j,r_{j+1}])$ and satisfies
\begin{equation}
d^\circ(\tilde{g}(r),\check{g}_{r_j}(r))=\frac{r}{r_j}\lambda_j\geq\lambda_j.
\label{d0-diff}
\end{equation}
From \eqref{pq-in} we see that $\tilde{g}(r_j)=p_j^+$ and $\theta(q_{j+1}^+)-\theta(\tilde{g}(r_{j+1}))>0$. This and the fact that the curve $\tilde{g}$ turns its concavity toward increasing $\theta$ imply that the whole curve lies on the right of the segment $[p_j^+,q_{j+1}^+]$. This and \eqref{d0-diff} yield
\[d^\circ(x,\partial\mathscr{S}_{r_j,\beta})\geq\lambda_j,\;\;x\in[p_j^+,q_{j+1}^+],\]
that together with \eqref{r*-rj} allow to complete the proof as in case (i). This concludes the proof of \eqref{segm-in}$_1$. The same argument proves \eqref{segm-in}$_2$.
\end{proof}
From \eqref{PeQ} and \eqref{sequence} we obtain
\begin{equation}
\begin{split}
&\lim_{\stackrel{j\rightarrow+\infty}{j\leq n}}\frac{\vert p_j^+-p_j^-\vert}{r_{j+1}-r_j}
=0,\\
&\lim_{\stackrel{j\rightarrow+\infty}{j\leq n}}\frac{\vert q_{j+1}^+-q_{j+1}^-\vert}{r_{j+1}-r_j}=2\hat{c}.
\end{split}
\label{lengths}
\end{equation}
This implies that, by choosing $c_1>0$ sufficiently large in \eqref{lambda rj}, we can assume
\begin{equation}
\frac{\vert p_j^+-p_j^-\vert}{r_{j+1}-r_j}\leq\epsilon,\quad\frac{\vert q_{j+1}^+-q_{j+1}^-\vert}{r_{j+1}-r_j}\leq3\hat{c},\;\;j=1,\ldots,n.
\label{near-infty}
\end{equation}
From \eqref{lengths} it follows
\begin{equation}
\lim_{\stackrel{j\rightarrow+\infty}{j\leq n}}\tan{\psi_j}=\hat{c},
\label{varpsi}
\end{equation}
where $\psi_j$ is the angle that the vector $q_{j+1}^+-p_j^+$ forms with $L_{\theta_{r_j}}$ (recall that $L_\theta$ is the ray determined by $\theta$). Let $[x_j,x_{j+1}]$ be a segment that connects a point $x_j\in\mathrm{arc}(p_j^-,p_j^+)$ with a point $x_{j+1}\in\mathrm{arc}(q_{j+1}^-,q_{j+1}^+)$. From \eqref{near-infty}, since $\epsilon>0$ is a small fixed number, it follows
\begin{equation}
[x_j,x_{j+1}]\subset\mathscr{I}_j,\;\;\;j=j_0,\ldots,n,
\label{in}
\end{equation}
where $\mathscr{I}_j\subset\{x: r(x)\in[r_j,r_{j+1}]\}$ is the set bounded by
 the union of the segments $[p_j^+,q_{j+1}^+]$, $[p_j^-,q_{j+1}^-]$ and of the arcs $\mathrm{arc}(p_j^-,p_j^+)$, $\mathrm{arc}(q_{j+1}^-,q_{j+1}^+)$, see Figure \ref{Ij}.  From \eqref{lengths} and \eqref{varpsi} we see that we can also assume
 \begin{equation}
\tilde{\psi}_j\leq\tilde{\psi}_0,\;\;\;j=1,\ldots,n,
\label{varpsi1}
\end{equation}
 where $\tilde{\psi}_j$ is the angle between $[x_j,x_{j+1}]$ and the ray $L_{\theta(x_j)}$ and $\tilde{\psi}_0$ a constant independent of $n$ and $1\leq j\leq n$. By neglecting the first $j_0$ terms of the sequences defined in \eqref{sequence} and by renumbering via $j=i+j_0$ we can assume that \eqref{in} and \eqref{varpsi1} hold and $\mathscr{I}_j$ is well defined for $j=1,\ldots,n$.

  Lemma \ref{balls-} (i) does not apply for $j=n+1$ and  Lemma \ref{balls-} (ii) does not apply for $j=n$. This together with the fact that the length of $\mathrm{arc}(p_{n+1}^-,p_{n+1}^+)$ diverges to $+\infty$ with $n$ can cause a significant error in the derivation of a lower bound for the energy of $u^{R_n}$ in the the diffuse interface that we define later. Therefore we need a new definition for the set $\mathscr{I}_n$.

\begin{lemma}
\label{ball-exists-n}
Assume that $n\in\N$ is sufficiently large. Let $\ell_{r_{n+1}}^\pm$, $\hat{p}_{n+1}^\pm=\hat{p}_{r_{n+1}}^\pm $, be as in Definition \ref{def} with $r_*=r_{n+1}$. Then
\begin{enumerate}
\item $[p_n^\pm,q_{n+1}^\pm]\cap\ell_{r_{n+1}}^\pm=\{q^\pm\}$ for some $q^\pm$ and there exist $\eta\in(0,1)$, $k_i\in(0,1)$, $i=1,2$,  independent of $n$ such that
\begin{equation}
k_1\lambda_n\leq r_{n+1}-r(q^\pm)\leq k_2(r_{n+1}-r_n).
\label{qpm-bounds}
\end{equation}
\item
\begin{equation}
\begin{split}
& x\in[p_n^\pm,q^\pm]\;\;\;\Rightarrow\\\\
&\vert u^{R_n}-a_\pm\vert\leq c\delta^\alpha\;\;\text{on}\;\;B_{\eta\lambda_n}(x)\setminus\tilde{\Sigma}.
\end{split}
\label{ball+}
\end{equation}
\item
\begin{equation}
\begin{split}
& x\in[q^\pm,\hat{p}_{n+1}^\pm]\;\;\text{and}\;\;r_{n+1}-r(x)=\rho\;\;\Rightarrow\\\\
&\vert u^{R_n}-a_\pm\vert\leq c\delta^\alpha\;\;\text{on}\;\;B_{\frac{\rho}{2}}(x)\setminus\tilde{\Sigma}.
\end{split}
\label{ball++}
\end{equation}
\end{enumerate}
\end{lemma}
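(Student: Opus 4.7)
\noindent\textit{Plan.} I handle the $+$ case; the $-$ statements follow by the $C_N$-equivariance of $u^{R_n}$ together with the analogous role played by $\ell_{r_{n+1}}^-$ and $\hat p_{n+1}^-$ in $\omega^{-1}\hat{\mathscr{S}}_{r_{n+1},\beta}$. Part (i) provides the geometric construction of $q^+$ with the two radial bounds; given (i), part (ii) reduces to applying Lemma \ref{ball-exists}(i) with $r_*=r_n$ on the shortened segment $[p_n^+,q^+]$, and part (iii) reduces to Lemma \ref{ball-exists}(ii) with $r_*=r_{n+1}$ on $[q^+,\hat p_{n+1}^+]\subset\ell_{r_{n+1}}^+$, each followed by Corollary \ref{esseScr}.

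\medskip

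\noindent\textit{Approach to (i).} I work in local coordinates $(\xi_1,\xi_2)$ at $\hat p_{n+1}^+$, with $\xi_1$ along the counterclockwise tangent to $C_{r_{n+1}}$ and $\xi_2$ along the inward radial direction. Using the bound $|\theta_{r_{n+1}}-\theta_{r_n}|\le \nu/(2r_n)+\nu/(2r_{n+1})+\hat c\ln(r_{n+1}/r_n)$ that follows from Lemma \ref{rrprime}, one writes $p_n^+=(\xi_1^p,\,r_{n+1}-r_n+O(1/r_n))$ and $q_{n+1}^+=(\xi_1^q,0)$; the lower inequality in Lemma \ref{rrprime} forces $\xi_1^q\ge \lambda_{n+1}$, while the upper one keeps $|\xi_1^p|$ and $\xi_1^q$ of order $\sqrt{r_n}$. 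Remark \ref{remark} identifies the direction of $\ell_{r_{n+1}}^+$ as a unit vector $(a,b)$ with $a,b>0$ (up to corrections of order $1/r_{n+1}$). Solving the resulting linear system produces the unique intersection, with
\[
r_{n+1}-r(q^+)=\frac{\xi_1^q(r_{n+1}-r_n)}{A+\xi_1^q},\qquad A=(r_{n+1}-r_n)\frac{a}{b}-\xi_1^p.
\]
Since both $A$ and $A+\xi_1^q$ are of order $r_{n+1}-r_n\sim\sqrt{r_n}$ for large $n$, the ratio $\xi_1^q/(A+\xi_1^q)$ lies in a fixed subinterval of $(0,1)$; this yields the upper bound $k_2(r_{n+1}-r_n)$ with some $k_2<1$. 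For the lower bound, $\xi_1^q\ge \lambda_{n+1}$ and the $O(1)$ control on $(r_{n+1}-r_n)/(A+\xi_1^q)$, together with $\lambda_{n+1}/\lambda_n\to 1$, produce $r_{n+1}-r(q^+)\ge k_1\lambda_n$.

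\medskip

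\noindent\textit{Approach to (ii) and (iii), and main obstacle.} For (ii), I run the argument of Lemma \ref{balls-}(ii) with $\check g_{r_n}$ and the shifted curve $\tilde g$ to obtain $d^\circ(x,\partial\mathscr{S}_{r_n,\beta})\ge\lambda_n$ for $x\in[p_n^+,q_{n+1}^+]$. The reason that lemma was stated only for $j\le n-1$ was the failure of radial clearance near $q_{n+1}^+$; on the strict subsegment $[p_n^+,q^+]$, however, part (i) gives $r(x)\le r_{n+1}-k_1\lambda_n$, and since $r_n^*\ge r_{n+1}$ (a consequence of $(1+\beta)r_n\ge r_{n+1}$ via \eqref{r-r<beta} and of $R_n\ge r_{n+1}$), we conclude $r_n^*-r(x)\ge k_1\lambda_n$. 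Applying Lemma \ref{ball-exists}(i) with $l=k_1\lambda_n$ and then Corollary \ref{esseScr} gives \eqref{ball+} after absorbing $k_1$ into $\eta$. For (iii), every $x\in[q^+,\hat p_{n+1}^+]$ lies on $\ell_{r_{n+1}}^+$, so with $\rho=r_{n+1}-r(x)$ we have $x=\tilde p^+(\rho)$; the upper bound in (i) gives $\rho\le k_2(r_{n+1}-r_n)\le\epsilon r_{n+1}$ for $n$ large, and Lemma \ref{ball-exists}(ii) together with Corollary \ref{esseScr} gives \eqref{ball++}. The main obstacle is part (i): the quantities $\lambda_n$, $\ln(r_{n+1}/r_n)$, $\nu/r_n$, and $r_{n+1}-r_n$ live on four distinct scales, and one must verify that all sub-leading perturbations coming from Lemma \ref{rrprime} remain controlled so that the crossing $q^+$ lies strictly interior to both $[p_n^+,q_{n+1}^+]$ and $\ell_{r_{n+1}}^+$, uniformly in $n$, with $r_{n+1}-r(q^+)$ captured on the correct scale $\lambda_n$.
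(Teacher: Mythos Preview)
Your proposal is correct and follows essentially the same route as the paper. Both arguments reduce (ii) to the proof of Lemma~\ref{balls-}(ii) with the radial clearance $r_{n}^*-r(x)\ge k_1\lambda_n$ now supplied by (i), and both obtain (iii) directly from Lemma~\ref{ball-exists}(ii) with $r_*=r_{n+1}$; for (i) the paper argues via the limiting isosceles triangle $T_n$ (apex $p_n^\pm$, base $2\hat c(r_{n+1}-r_n)$, angles $\psi,\varphi,\varphi/2$ read off from the figure), which is exactly the geometric picture underlying your coordinate computation. The one place where the two presentations diverge is the lower bound $k_1\lambda_n$: the paper isolates the extremal position $\hat p^+=x(r_{n+1},\check\vartheta_{r_n}(r_{n+1}))$ of $\hat p_{n+1}^+$ allowed by Lemma~\ref{rrprime} and analyses the small triangle $\hat q^+\hat p^+q_{n+1}^+$ with side $\lambda_{n+1}$ and angles $\varphi,\varphi/2$; your formula $r_{n+1}-r(q^+)=\xi_1^q(r_{n+1}-r_n)/(A+\xi_1^q)$ together with $\xi_1^q\ge\lambda_{n+1}$ encodes precisely this extremal case.

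One detail you gloss over: for the strict inequality $k_2<1$ you need $A$ bounded below by a positive multiple of $r_{n+1}-r_n$, i.e.\ $a/b=\cot(\varphi_*/2)$ strictly larger than the worst-case value of $\xi_1^p/(r_{n+1}-r_n)$. Since $\cot(\varphi_*/2)\to\hat c+\sqrt{1+\hat c^2}$ while Lemma~\ref{rrprime} gives $\xi_1^p\le\hat c(r_{n+1}-r_n)+O(\lambda_n)$, the margin is $\sqrt{1+\hat c^2}$ and the claim holds; the paper absorbs this verification into the triangle picture without computing it explicitly.
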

\begin{proof}
Let $\varphi_{n+1}=\varphi_*$ with $\varphi_*$ defined as in Remark \ref{remark} for $r_*=r_{n+1}$, $\varphi_{n+1}$ is the angle between the two tangents to $\hat{\mathscr{S}}_{r_{n+1},\beta}$ at $\hat{p}_{n+1}^+$. From \eqref{phi*} $\varphi=\lim_{n\rightarrow+\infty}\varphi_{n+1}$ has $\sin{\varphi}=\frac{1}{\sqrt{1+\hat{c}^2}}$. Note that this and \eqref{varpsi} imply $\varphi=\frac{\pi}{2}-\psi$, $\psi= \lim_{n\rightarrow+\infty}\psi_n$. Note also that
\eqref{lengths}$_1$ and $\mathrm{arc}(\hat{p}_{n+1}^-,\hat{p}_{n+1}^+)\subset\mathrm{arc}(p_{n+1}^-,p_{n+1}^+)$ imply $\lim_{n\rightarrow+\infty}\frac{\vert\hat{p}_{n+1}^+-\hat{p}_{n+1}^-\vert}{r_{n+1}-r_n}=0$.

These observations yield that, for large $n$,
 $p_n^+\simeq p_n^-$, $ q_{n+1}^+$ and $q_{n+1}^-$ are approximately the vertices of an isosceles triangle $T_n$ with basis $2\hat{c}(r_{n+1}-r_n)$ and height $r_{n+1}-r_n$ as indicated in Figure \ref{tn}. In the same approximation $\hat{p}_{n+1}^+\simeq\hat{p}_{n+1}^-$ lies on the basis of $T_n$ and $\ell_{r_{n+1}}^\pm$  forms an angle of size $\frac{\varphi}{2}$ ($\varphi=\frac{\pi}{2}-\psi$) with the basis of $T_n$. It follows that the points $q^\pm$ claimed in (i) exist and satisfy the right inequality in \eqref{qpm-bounds} for some $k_2\in(0,1)$ independent of $n$.

 To complete the proof of (i) we note that  $\hat{p}^+=x(r_{n+1},\check{\vartheta}_{r_n}(r_{n+1}))$ is the extreme position allowed to $\hat{p}_{n+1}^+$ on $\mathrm{arc}( q_{n+1}^-, q_{n+1}^+)$. Similarly we define $\hat{p}^-$, the extreme possible position of $\hat{p}_{n+1}^-$ on $\mathrm{arc}( q_{n+1}^-, q_{n+1}^+)$. This follows from  Lemma \ref{rrprime} with $r_*=r_n$. The left inequality \eqref{qpm-bounds} is determined by the limit position $\hat{q}^\pm$  assumed by $q^\pm$ when $\hat{p}_{n+1}^\pm=\hat{p}^\pm$. For $n$ large $\hat{q}^+,\hat{p}_{n+1}^+=\hat{p}^+$ and $q_{n+1}^+$ are the vertices of a triangle which, as illustrated in Figure \ref{tn},  has $\vert q_{n+1}^+-\hat{p}^+\vert\simeq\lambda_{n+1}$ and the angles in $q_{n+1}^+$ and in $\hat{p}^+$ approximately equal to $\varphi$ and $\frac{\varphi}{2}$ respectively. A similar argument applies to $\hat{q}^-,\hat{p}_{n+1}^-$ and $q_{n+1}^-$. The lower bound for $r_{n+1}-r(q^\pm)$ is a consequence of the geometric properties of the above triangle.
 \hskip.2cm

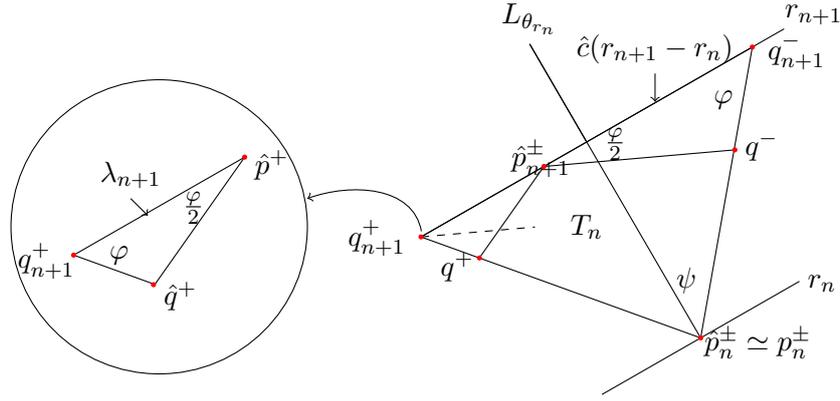
\begin{figure}
  \begin{center}
\begin{tikzpicture}[scale=.75]
\draw[thin] (0,0)--(3.464,2);
\draw [thin](0,0)--(-2.9066,-1.6782);
\draw[thin] (0,0)--(-1,1.732);
\draw[thin] (0,0)--(-1,1.732);

\draw[thin] (2.,-3.464)--(3.732,-2.464);
\draw[thin] (2.,-3.464)--(.268,-4.464);
\draw[thin] (0,0)--(2,-3.464);L

\draw[] (2.9066,1.6782)--(-2.9066,-1.6782);
\draw[] (2.9066,1.6782)--(2,-3.464);
\draw[] (-2.9066,-1.6782)--(2,-3.464);

\draw[thin] (-.75,-.433)--(2.63,-.1369);
\draw[thin] (-.75,-.433)--(-1.882,-2.05);

\draw[dashed] (-2.9066,-1.6782)--(-.9066,-1.503);
\draw[<-] (-4.9,-1) to [out=20,in=100] (-2.90,-1.5782);

\path[fill=red] (2.60,-.1395) circle [radius=.045];
\path[fill=red] (-.75,-.433) circle [radius=.045];
\path[fill=red] (-1.882,-2.05) circle [radius=.045];

\path[fill=red] (2,-3.464) circle [radius=.045];
\path[fill=red] (2.9066,1.6782) circle [radius=.045];
\path[fill=red] (-2.9066,-1.6782) circle [radius=.045];


\node[right] at (3.3,2.25) {$r_{n+1}$};
\node[right] at (3.7,-2.5) {$r_n$};
\node[above] at (.5,-.5) {$\frac{\varphi}{2}$};
\node[above] at (2.4,.4) {$\varphi$};
\node[left] at (-1.8,-2.2) {$q^+$};
\node[right] at (2.6,-.07) {$q^-$};
\node[above] at (1.75,-2.85) {$\psi$};
\node[above] at (-1,1.732) {$L_{\theta_{r_n}}$};
\node[above] at (3,-4) {$\hat{p}_n^\pm\simeq p_n^\pm$};
\draw[->] (1.2,1.211)--(1.2,.712);
\node[above] at (1.2,1.211) {$\hat{c}(r_{n+1}-r_n)$};
\node[right] at (3.,1.6) {$q_{n+1}^-$};
\node[left] at (-3.,-1.7) {$q_{n+1}^+$};
\node[] at (-.8,-.3) {$\hat{p}_{n+1}^\pm$};
\node[] at (0,-1.5) {$T_n$};
\draw[thin] (-9,-2)--(-6,-.268);
\draw[thin] (-9,-2)--(-7.6,-2.5096);
\draw[thin] (-6,-.268)--(-7.6,-2.553);

\path[fill=red] (-9,-2) circle [radius=.045];
\path[fill=red] (-6,-.268) circle [radius=.045];
\path[fill=red] (-7.6,-2.523) circle [radius=.045];

\draw [thin] (-7.5,-1.5) circle [radius=2.6];;

\node[below] at (-6.9,-.7) {$\frac{\varphi}{2}$};
\node[above] at (-8.2,-2.35) {$\varphi$};
\node[above] at (-8,-1) {$\lambda_{n+1}$};
\draw[->] (-8,-1)--(-7.7,-1.3);
\node[left] at (-8.8,-2.1) {$q_{n+1}^+$};
\node[right] at (-6,-.4) {$\hat{p}^+$};
\node[right] at (-7.6,-2.723) {$\hat{q}^+$};

\end{tikzpicture}
\end{center}
\caption{The triangle $T_n$ and the points $q^\pm$. The triangle $\hat{q}^+, \hat{p}^+, q_{n+1}^+$.}
\label{tn}
\end{figure}

From the proof of (ii) in Lemma \ref{balls-} we have that $[p_n^+,q_{n+1}^+]$ is contained in $\bar{\mathscr{S}}_{r_n,\beta}$. Moreover $x\in[p_n^+,q_{n+1}^+]$ implies $B_{\eta\lambda_n}(x)\subset\mathscr{S}_{r_n,\beta}$ provided $r_{n+1}-r(x)\geq\eta\lambda_n$. From \eqref{qpm-bounds} it follows that, by reducing the value of $\eta$ if necessary, we can make sure that this condition is satisfied for every $x\in[p_n^+,q^+]$. This proves the first part of (ii). The proof of the second part is similar.
\hskip.2cm

Statement (iii) is a plain consequence of part (ii) of Lemma \ref{ball-exists} with $r_*=r_{n+1}$. The proof is complete.
\end{proof}
 We are now in the position to give a suitable definition of the set $\mathscr{I}_n$. We define $\mathscr{I}_n$ to be the subset of $\{x: r(x)\in[r_n,r_{n+1}]\}$ bounded by the union of $[p_n^-,q^-]$, $[p_n^+,q^+]$, $[q^-,\hat{p}_{n+1}^-]$,  $[q^+,\hat{p}_{n+1}^+]$, $\mathrm{arc}( p_n^-, p_n^+)$ and $\mathrm{arc}(\hat{p}_{n+1}^-, \hat{p}_{n+1}^+)$.
\vskip.2cm

  Define  (see Figure \ref{I})
  \[\mathscr{I}=\cup_{j=1}^n\mathscr{I}_j.\]

 The set $\mathscr{I}$  is a kind of diffuse interface that separate regions where $u^{R_n}$ is near to $a$ or to $\omega^{-1}a$. Indeed let $\mathscr{R}$ be defined by
 \[B_{r_{n+1}}\setminus B_{r_1}\cup_{j=1}^n\omega^{j-1}\mathscr{I}=\cup_{j=1}^n\omega^{j-1}\mathscr{R}.\]
Then
 \begin{equation}
 x\in\omega^{j-1}\mathscr{R}\;\;\;\Rightarrow\;\;\vert u^{R_n}(x)-\omega^{j-1}a\vert\leq c\delta^\alpha,\;\;r(x)\not\in\Sigma,\;\;j=1,\ldots,N-1.
 \label{ok-R}
 \end{equation}

 \subsection{An upper bound for the length of $\mathscr{I}$.}\label{lenghtUB}
From what we know up to now the structure of $\mathscr{I}$ can be quite complex, for example we can not exclude that $\mathscr{I}$ revolves several times around $B_{r_1}$. We will show that, instead, the shape of $\mathscr{I}$ can be controlled. We will associate to $\mathscr{I}$ a kind of length and show that most of the energy of $u^{R_n}$ is contained in $\mathscr{I}$ (and in its images under $\omega$) and proportional to its length. Then from the upper bound \eqref{UB} we obtain that the the difference from the length of $\mathscr{I}$ and $R_n$ is bounded by a constant independent of $n$. This implies a strong restriction on the geometry of $\mathscr{I}$ and eventually leads to Theorem \ref{Th2}.

If $(u,v)$, $u,v\in\R^2$, is an ordered pair of linearly independent vectors we say that $(u,v)$ is positive if the rotation from $u$ to $v$ through an angle $<\pi$ is counterclockwise, negative otherwise.

 Let $\Gamma$ be the family of rectifiable curves that connect $x(r_2,\theta_{r_2})$ to $x(r_{n+1},\theta_{r_{n+1}})$ and are contained in $\mathscr{I}$. We let $\vert\gamma\vert$ the length of $\gamma\in\Gamma$.
\begin{proposition}\label{Gamma}
\begin{enumerate}
\item
There exists $\gamma^m\in\Gamma$ and $K>0$ independent of $n$ such that
\begin{equation}
\vert\gamma^m\vert=\min_{\gamma\in\Gamma}\vert\gamma\vert,
\label{Min}
\end{equation}
and
 \begin{equation}
 \frac{d s}{d r}\leq K,\;\;r\in[r_2,r_{n+1}],
 \label{Kstar}
 \end{equation}
 where $s:[0,\vert\gamma^m\vert]\rightarrow\R$ is the curvilinear abscissa along $\gamma^m$.

 Moreover
 \begin{equation}
 \gamma^m([s_j,s_{j+1}])=[\gamma_j^m,\gamma_{j+1}^m],\;\;j=2,\ldots,n
 \label{poligonal}
 \end{equation}
 where $s_j$ and $\gamma_j^m$ are defined by $\gamma_j^m=\gamma^m(s_j)\in\mathrm{arc}(p_j^-,p_j^+)$.
\item
 Set $\tau_j=\frac{\gamma_{j+1}^m-\gamma_j^m}{\vert\gamma_{j+1}^m-\gamma_j^m\vert},\;\;j=2,\ldots,n$. Then
 \[\tau_{j-1}\neq\tau_j\;\;\Rightarrow\;\;\gamma_j^m\in\{p_j^-,p_j^+\}\]
 and
 \[\begin{split}
 &\gamma_j^m=p_j^-\;\;\Rightarrow\;\;(\tau_{j-1},\tau_j)\;\;\text{is negative},\\
 &\gamma_j^m=p_j^+\;\;\Rightarrow\;\;(\tau_{j-1},\tau_j)\;\;\text{is positive}.
 \end{split}\]
 \end{enumerate}
\end{proposition}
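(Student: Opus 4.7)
The plan is to produce $\gamma^m$ by the direct method and then to read off its piecewise-linear structure from the chain of curvilinear quadrilaterals $\mathscr{I}_j$, which are joined to one another at the pinch-arcs $\mathrm{arc}(p_j^-,p_j^+)\subset C_{r_j}$. For the existence step I would reparametrize a minimizing sequence in $\Gamma$ by constant speed on $[0,1]$; since $\mathscr{I}$ is compact and an explicit polygonal competitor through the vertices $p_j^\pm,q_j^\pm$ gives a uniform length bound, the sequence is equi-Lipschitz, so Ascoli--Arzel\`a together with the closedness of $\mathscr{I}$ and the lower semicontinuity of length produce a minimizer $\gamma^m\in\Gamma$.

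To identify the crossings of $\gamma^m$ with each $C_{r_j}$, I would use that \eqref{pq-in} gives $\mathrm{arc}(p_j^-,p_j^+)\subset\mathrm{arc}(q_j^-,q_j^+)$, so the overlap $\mathscr{I}_{j-1}\cap\mathscr{I}_j$ on $C_{r_j}$ reduces to the short arc $\mathrm{arc}(p_j^-,p_j^+)$. Any curve in $\Gamma$ must therefore cross $C_{r_j}$ through this arc (a contact outside it is tangential and can be excised), and a double transverse crossing would be strictly shortened by replacing the intermediate portion with the $C_{r_j}$-sub-arc between the two crossings, which lies in $\mathscr{I}_{j-1}\cap\mathscr{I}_j$. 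Thus $\gamma^m\cap C_{r_j}=\{\gamma_j^m\}$ with $\gamma_j^m\in\mathrm{arc}(p_j^-,p_j^+)$, which defines $s_j$. On $[s_j,s_{j+1}]$ the restriction is a length-minimizer in $\mathscr{I}_j$ between $\gamma_j^m\in\mathrm{arc}(p_j^-,p_j^+)$ and $\gamma_{j+1}^m\in\mathrm{arc}(p_{j+1}^-,p_{j+1}^+)\subset\mathrm{arc}(q_{j+1}^-,q_{j+1}^+)$; by \eqref{in} the chord $[\gamma_j^m,\gamma_{j+1}^m]$ is contained in $\mathscr{I}_j$ (the case $j=n$ uses the refined structure of $\mathscr{I}_n$ from Lemma \ref{ball-exists-n}), hence $\gamma^m\big|_{[s_j,s_{j+1}]}$ coincides with this segment, giving \eqref{poligonal}. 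The angle estimate \eqref{varpsi1} then yields $ds/dr\le\sec\tilde\psi_0=:K$ along each segment, which is \eqref{Kstar}.

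For part (ii), I would first rule out a genuine corner ($\tau_{j-1}\ne\tau_j$) at an interior point of $\mathrm{arc}(p_j^-,p_j^+)$. At such a point the lateral boundaries $[p_{j-1}^\pm,q_j^\pm]$ and $[p_j^\pm,q_{j+1}^\pm]$ lie at positive distance from $\gamma_j^m$, so a small ball about $\gamma_j^m$ is contained in $\mathscr{I}_{j-1}\cup\mathscr{I}_j$; the chord between $\gamma^m(s_j-\varepsilon)$ and $\gamma^m(s_j+\varepsilon)$ then stays inside $\mathscr{I}$ (it still crosses $C_{r_j}$ inside $\mathrm{arc}(p_j^-,p_j^+)$ for small $\varepsilon$) and, by the strict triangle inequality, strictly shortens $\gamma^m$, contradicting minimality. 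Hence $\gamma_j^m\in\{p_j^-,p_j^+\}$. The sign of the rotation is then pinned down by a taut-string argument at the reflex vertex of $\partial\mathscr{I}$ at $p_j^\pm$: at $p_j^+$ the ``missing wedge'' of $\mathscr{I}$ lies on the side of larger $\theta$, bounded by the $C_{r_j}$-tangent in the direction of increasing $\theta$ (the continuation of $\mathscr{I}_{j-1}$'s top arc beyond $p_j^+$) and by the segment $[p_j^+,q_{j+1}^+]$, both on that side by virtue of \eqref{pq-in}$_2$; a taut string through $p_j^+$ is therefore forced to the smaller-$\theta$ side of the straight chord $[\gamma_{j-1}^m,\gamma_{j+1}^m]$, which rotates $\tau_{j-1}$ counterclockwise into $\tau_j$, i.e.\ the pair is positive. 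The symmetric picture at $p_j^-$ yields negative orientation. The main obstacle is precisely this last geometric bookkeeping---matching the bend direction at $p_j^\pm$ with the orientation convention---but it reduces, via \eqref{PeQ}, to the already-established positivity of $\theta(q_{j+1}^+)-\theta(p_j^+)$ from \eqref{pq-in}$_2$ and its symmetric analog at $p_j^-$.
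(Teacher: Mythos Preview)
Your proof is correct, and for the piecewise-linear structure \eqref{poligonal} and all of part (ii) it follows the same corner-cutting logic as the paper, with considerably more detail supplied for the orientation of the turn at $p_j^\pm$ (the paper dispatches that in a single sentence by saying the same shortcut argument applies).

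The existence step, however, is organized differently. You run the direct method on the full space of rectifiable curves in $\mathscr{I}$: constant-speed reparametrization, a uniform length bound from a polygonal competitor, Ascoli--Arzel\`a, lower semicontinuity of length; only afterwards do you argue the minimizer crosses each $C_{r_j}$ once and is a segment on each $\mathscr{I}_j$. The paper reverses the order and thereby avoids the compactness machinery entirely. It observes first that any $\gamma\in\Gamma$ admits a polygonal replacement $\hat\gamma=\bigcup_j[\gamma(s_j),\gamma(s_{j+1})]$ with $\gamma(s_j)\in\mathrm{arc}(p_j^-,p_j^+)$, which by \eqref{in} lies in $\mathscr{I}$ and trivially satisfies $|\hat\gamma|\le|\gamma|$. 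Hence $\inf_{\Gamma}|\gamma|$ equals the infimum of the continuous function $(x_3,\ldots,x_n)\mapsto\sum_j|x_{j+1}-x_j|$ over the compact product $\prod_{j=3}^{n}\mathrm{arc}(p_j^-,p_j^+)$, so a minimizer exists and is automatically polygonal. This finite-dimensional reduction delivers \eqref{Min} and \eqref{poligonal} in one stroke and makes your single-crossing discussion unnecessary. Your route is more robust (it would survive if the $\mathscr{I}_j$ were not star-shaped enough for \eqref{in}), but in the present setting the paper's shortcut is the more economical one.
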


\begin{figure}
  \begin{center}
\begin{tikzpicture}[scale=.75]
\path[fill=lightgray] (0,0) circle [radius=1.1];

\draw [dotted] (0,2) arc [radius=2, start angle=90, end angle= 60];
\draw [dotted] (0,3) arc [radius=3, start angle=90, end angle= 60];
\draw [dotted] (0,4.1) arc [radius=4.1, start angle=90, end angle= 60];
\draw [dotted] (0,5.3) arc [radius=5.3, start angle=90, end angle= 60];
\draw [dotted] (0,6.6) arc [radius=6.6, start angle=90, end angle= 60];
\draw [dotted] (0,8) arc [radius=8, start angle=90, end angle= 60];
\draw [thin] (0,8.2) arc [radius=8.2, start angle=90, end angle= 60];

\draw [dotted] (0,2) arc [radius=2, start angle=90, end angle= 120];
\draw [dotted] (0,3) arc [radius=3, start angle=90, end angle= 120];
\draw [dotted] (0,4.1) arc [radius=4.1, start angle=90, end angle= 120];
\draw [dotted] (0,5.3) arc [radius=5.3, start angle=90, end angle= 120];
\draw [dotted] (0,6.6) arc [radius=6.6, start angle=90, end angle= 120];
\draw [dotted] (0,8) arc [radius=8, start angle=90, end angle= 120];
\draw [thin] (0,8.2) arc [radius=8.2, start angle=90, end angle= 120];


\path[fill=lightgray] (-.21,1.08)
 arc[radius=1.1, start angle=101, end angle=79]
 to (.908,1.782)
  arc[radius=2, start angle=63, end angle=117]
 to (-.21,1.08);


\path[fill=lightgray] (.249,1.9845) arc[radius=2, start angle=82.85, end angle=97.15]
to (-1.0751,2.801)
arc[radius=3, start angle=111, end angle=69]
to (.249,1.9845);


\path[fill=lightgray] (.584,2.943) arc[radius=3, start angle=78.775, end angle=69.225]
to (2.173,3.477)
arc[radius=4.1, start angle=58, end angle=90]
to (.584,2.943);


\path[fill=lightgray] (.2146,4.094) arc[radius=4.1, start angle=87, end angle=80.013]
to (1.7686,4.996)
arc[radius=5.3, start angle=70.506, end angle=96.5065]
to (.2146,4.094);


\path[fill=lightgray] (.8834,5.226)  arc[radius=5.3, start angle=80.405, end angle=75]
to (2.642,6.0481)
arc[radius=6.6, start angle=66.4025, end angle=89.0025]
to (.8834,5.226) ;


\path[fill=lightgray] (1.4846,6.4308)
 arc[radius=6.6, start angle=77, end angle=72.659]
 to (2.654,6.75) -- (1.595,7.84)-- (.8682,7.6654) -- (1.4846,6.4308);


\path[fill=red] (-.21,1.08) circle [radius=.045];
\path[fill=red] (.21,1.08) circle [radius=.045];

\path[fill=red] (-.908,1.782) circle [radius=.045];
\path[fill=red] (.908,1.782) circle [radius=.045];


\path[fill=red] (.249,1.9845) circle [radius=.045];
\path[fill=red] (-.249,1.9845) circle [radius=.045];

\path[fill=red] (1.0751,2.801) circle [radius=.045];
\path[fill=red] (-1.0751,2.801) circle [radius=.045];


\path[fill=red] (.584,2.943) circle [radius=.045];
\path[fill=red] (1.0641,2.8049) circle [radius=.045];

\path[fill=red] (0,4.1) circle [radius=.045];
\path[fill=red] (2.173,3.477) circle [radius=.045];


\path[fill=red] (.2146,4.094) circle [radius=.045];
\path[fill=red] (.7111,4.038) circle [radius=.045];

\path[fill=red] (1.7686,4.996) circle [radius=.045];
\path[fill=red] (-.6,5.266) circle [radius=.045];


\path[fill=red] (.8834,5.226) circle [radius=.045];
\path[fill=red] (1.372,5.119) circle [radius=.045];

\path[fill=red] (.1149,6.598) circle [radius=.045];
\path[fill=red] (2.642,6.0481) circle [radius=.045];


\path[fill=red] (1.4846,6.4308) circle [radius=.045];
\path[fill=red] (1.9672,6.3) circle [radius=.045];

\path[fill=red] (.8682,7.6654) circle [radius=.045];
\path[fill=red] (2.654,6.75) circle [radius=.045];

\path[fill=red] (1.554,7.848) circle [radius=.045];
\path[fill=red] (1.636,7.831) circle [radius=.045];



\draw [thin] (0,2) -- (.584,2.943) -- (.7111,4.038) -- (1.4846,6.4308) -- (1.595,7.84);

\path[fill=black] (0,0) circle [radius=.035];
\node[right] at (0,0) {$0$};

\node[left] at (-.55,.952) {$r_1$};
\node[left] at (-1,1.732) {$r_2$};
\node[left] at (-2.05,3.551) {$r_j$};
\node[left] at (-3.3,5.716) {$r_n$};
\node[left] at (-4.,6.7682) {$r_{n+1}$};
\node[left] at (-4.,7.1514) {$R_n$};
\node[right] at (0,4.8) {$\mathscr{I}$};
\node[right] at (.5,6) {$\gamma^m$};
\draw[->] (1.1,6.) -- (1.25,5.75);

\node [left] at (.8682,7.6654)  {$q^+$};
\node [right] at (2.654,6.75)  {$q^-$};

\node [left] at (.05,4.)  {$q_j^+$};
\node [right] at (2.25,3.5)  {$q_j^-$};

\node [left] at (-.3,4.5)  {$p_j^+$};
\draw[->] (-.35,4.4) -- (.15,4.12);
\node [right] at (1.02,4.2)  {$p_j^-$};
\draw[->] (1.1,4.2) -- (.75,4.05);

\node [above] at (1.65,7.8) {$\hat{p}_{n+1}^\pm$};

\draw (.292,2.4715) -- (.55,2.3114);
\draw (.292,2.4715) -- (-.5,2.9616);

\node [right] at (.5,2.1514)  {$\xi_r^-$};
\node [left] at (-.25,3.2116)  {$\xi_r^+$};

\draw [thin] (0,2.4887) arc [radius=2.4887, start angle=90, end angle= 60];
\draw [thin] (0,2.4887) arc [radius=2.4887, start angle=90, end angle= 120];
\node [right] at (1.2443,2.1552)  {$r$};

\path[fill=black] (.55,2.3114) circle [radius=.04];
\path[fill=black] (-.5,2.9616) circle [radius=.04];
\end{tikzpicture}
\end{center}
\caption{The diffuse interface $\mathscr{I}$, the curve $\gamma^m$ and the points $\xi_r^\pm$.}
\label{I}
\end{figure}
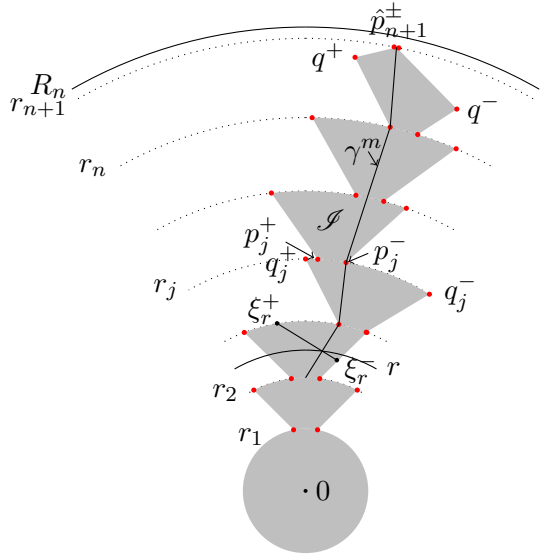

\begin{proof}
1. Given $\gamma\in\Gamma$ set $s_2=0$, $s_{n+1}=\vert\gamma\vert$  ($s$ the curvilinear abscissa along $\gamma$). For $j=3,\ldots,n$ there exists $s_j\in(0,\vert\gamma\vert)$ such that $\gamma(s_j)\in\mathrm{arc}(p_j^-,p_j^+)$. We can assume that $s_j$ is chosen so that $s_j<s_{j+1}$, $j=2,\ldots,n$. From \eqref{in} we have
\[[\gamma(s_j),\gamma(s_{j+1})]\subset\mathscr{I}_j,\;\;j=2,\ldots,n.\]
Therefore the curve $\hat{\gamma}=\cup_{j=2}^n[\gamma(s_j),\gamma(s_{j+1}]$ belongs to $\Gamma$ and
\begin{equation}
\vert\hat{\gamma}\vert\leq\vert\gamma\vert,\;\;\gamma\in\Gamma.
\label{^is better}
\end{equation}
The length $\vert\hat{\gamma}\vert$ of $\hat{\gamma}$ is a continuous function of the $n-2$ points $\gamma(s_j)\in\mathrm{arc}(p_j^-,p_j^+)$, $j=3,\ldots,n-1$. This implies the existence of $\gamma^m\in\Gamma$ that satisfies \eqref{Min}. The bound \eqref{Kstar} follows from \eqref{varpsi1} with $K=\frac{1}{\cos{\tilde{\psi}_0}}$. This completes the proof of (i). To prove (ii) we observe that, if $\gamma_j^m\not\in\{p_j^-,p_j^+\}$ and $\tau_{j-1}\neq\tau_j$, there exist $x,y\neq\gamma_j^m$, $x\in[\gamma_{j-1}^m,\gamma_j^m]$ and $y\in[\gamma_j^m,\gamma_{j+1}^m]$ such that $[x,y]\subset\mathscr{I}$. This contradicts the minimality of $\gamma^m$ since $\vert y-x\vert<\vert\gamma_j^m-x\vert+\vert y-\gamma_j^m\vert$. This contradiction proves that $\gamma_j^m\not\in\{p_j^-,p_j^+\}$ implies $\tau_{j-1}=\tau_j$. The same argument applies to the case $\gamma_j^m\in\{p_j^-,p_j^+\}$.
The proof is complete.
\end{proof}

Let $\upsilon_j\in\R^2$, $j=2,\ldots,n$ a unit vector orthogonal to $\tau_j$ and such that $(\tau_j,\upsilon_j)$ is positive. Given $r\in[r_j,r_{j+1}]$ let $N_{j,r}=\{x=\gamma^m(r)+t\upsilon_j: t\in\R\}$ be the line through $\gamma^m(r)$ orthogonal to $\tau_j$.
For $r\neq r_j$, $j=3,\ldots,n$ and $j$ such that $r\in(r_j,r_{j+1})$ let $[\xi_r^-,\xi_r^+]$ be the closure of the connected component of $N_{j,r}\cap\mathring{\mathscr{I}}$ that contains $\gamma^m(r)$. $\mathring{E}$ denotes the interior of $E$. If $r=r_j$ and $\tau_{j-1}=\tau_j$ we have $N_{j-1,r_j}=N_{j,r_j}$ and we define $[\xi_r^-,\xi_r^+]$ as before. If instead, $\tau_{j-1}\neq\tau_j$, we denote $[\xi_{r_j^-}^+,\xi_{r_j^-}^-]$ and $[\xi_{r_j^+}^+,\xi_{r_j^+}^-]$ the two segments that the previous definition yields with respect to $N_{j-1,r_j}$ and $N_{j,r_j}$ respectively. From Proposition \ref{Gamma} (ii) it follows that $r=r_j$ and $\tau_{j-1}\neq\tau_j$ imply that one of the following alternatives holds
\begin{description}
\item[a)]$\gamma_j^m=p_j^-=\xi_{r_j^\pm}^-$\quad and \quad $(\xi_{r_j^-}^+-\xi_{r_j^-}^-,\xi_{r_j^+}^+-\xi_{r_j^+}^-)$\quad is negative,
\item[b)]$\gamma_j^m=p_j^+=\xi_{r_j^\pm}^+$\quad and \quad $(\xi_{r_j^-}^--\xi_{r_j^-}^+,\xi_{r_j^+}^--\xi_{r_j^+}^+)$\quad is positive.
\end{description}

A natural consequence of a) and b) is Proposition \ref{no-inters} (ii) below that we prove in detail in Section \ref{app}

 Define
\[\begin{split}
&\partial\mathscr{I}_j^\pm=[p_j^\pm,q_{j+1}^\pm]\cup\mathrm{arc}[q_{j+1}^\pm,p_{j+1}^\pm],\;\;j=1,\ldots,n-1,\\\\
&\partial\mathscr{I}_n^\pm=[p_n^\pm,q^\pm]\cup\mathrm{arc}[q^\pm,\hat{p}_{n+1}^\pm].
\end{split}\]
For $j=1,\ldots,n-1$,  $\partial\mathscr{I}_j^-$ and $\partial\mathscr{I}_j^+$ are the connected components of
$\partial\mathscr{I}_j\setminus\cup_{i=0,1}\mathrm{arc}[p_{j+i}^-,p_{j+i}^+]$.
 $\partial\mathscr{I}_n^-$ and $\partial\mathscr{I}_n^+$ are the connected components of
$\partial\mathscr{I}_n\setminus(\mathrm{arc}[p_n^-,p_n^+]\cup\mathrm{arc}[\hat{p}_{n+1}^-,\hat{p}_{n+1}^+])$.

\begin{proposition}\label{no-inters} It results
\begin{equation}
\xi_r^\pm\in\cup_{i=j-1}^{j+1}\partial\mathscr{I}_i^\pm,\;\;\;r\in(r_j,r_{j+1}),\;\;j=2,\ldots,n-1.
\label{xi-in dpm}
\end{equation}
and
\begin{equation}
[\xi_r^-,\xi_r^+]\cap[\xi_{r^\prime}^-,\xi_{r^\prime}^+]=\emptyset,\;\;r\neq r^\prime\in[r_2,r_{n+1}]\setminus\{r_j\}_{j=2}^{n+1}.
\label{no-inters1}
\end{equation}
\end{proposition}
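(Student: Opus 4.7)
My plan is to prove (i) and (ii) separately. For (i), I will use the slope bound \eqref{varpsi1} on the polygonal curve $\gamma^m$ together with the observation that the arcs $\mathrm{arc}(p_i^-,p_i^+)$ for $3\le i\le n$ are the common boundaries of consecutive pieces $\mathscr I_{i-1}$ and $\mathscr I_i$ and hence belong to $\mathring{\mathscr I}$. For (ii), I will combine a parallel-lines argument inside each strip with the turning-direction rule stated just before the proposition as alternatives a) and b).

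For (i), fix $r\in(r_j,r_{j+1})$ and $j\in\{2,\dots,n-1\}$. The first step is to check that the component of $N_{j,r}\cap\mathring{\mathscr I}$ containing $\gamma^m(r)$ is contained in the annulus $r_{j-1}\le\rho\le r_{j+2}$. Moving along $N_{j,r}$ by arclength $t$, the radial coordinate changes by at most $|t|\sin\tilde\psi_0$ by \eqref{varpsi1}, while $|t|$ is itself bounded by the tangential extent of a few adjacent strips, which is $O(r_j^{1/2})$ by \eqref{PeQ} and Lemma \ref{1:2}. Comparing this with $r_{j\pm 2}-r_{j\pm 1}\asymp r_j^{1/2}$ from \eqref{.5}, and if necessary enlarging $c_1$ in \eqref{lambda rj}, I force the radial excursion to remain inside the three-strip annulus. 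Inside that annulus, $\partial\mathscr I$ consists only of the side pieces $\partial\mathscr I_i^\pm$ with $i\in\{j-1,j,j+1\}$, since all the interior arcs $\mathrm{arc}(p_i^-,p_i^+)$ have been absorbed into $\mathring{\mathscr I}$. This establishes \eqref{xi-in dpm}.

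For (ii), I begin with $r,r'\in(r_j,r_{j+1})$ for a common $j$: then $N_{j,r}$ and $N_{j,r'}$ are distinct parallel lines, so their intersections with $\mathring{\mathscr I}$ are automatically disjoint. For $r\in(r_{j-1},r_j)$ and $r'\in(r_j,r_{j+1})$, the case $\tau_{j-1}=\tau_j$ again reduces to two parallel lines. If $\tau_{j-1}\ne\tau_j$, Proposition \ref{Gamma}(ii) forces $\gamma_j^m\in\{p_j^-,p_j^+\}$ and fixes the orientation of $(\tau_{j-1},\tau_j)$; writing this as alternatives a) and b), in case a) the negative orientation of $(\xi_{r_j^-}^+-\xi_{r_j^-}^-,\xi_{r_j^+}^+-\xi_{r_j^+}^-)$ places the common pivot $p_j^-$ at the $-$ endpoint of both segments and opens the two segments into disjoint angular sectors of $\mathring{\mathscr I}$ around $p_j^-$; propagating this by parallel translation along the two families $N_{j-1,\cdot}$ and $N_{j,\cdot}$ gives the desired disjointness for all $r,r'$ in the adjacent strips. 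Case b) is symmetric. Non-adjacent strips reduce to the adjacent case by iteration: each turning point further separates the two families of lines.

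The main obstacle I anticipate is the quantitative step in (i): one must verify that $|t|\sin\tilde\psi_0$ does not merely grow to the same order as $r_{j\pm 2}-r_{j\pm 1}$ but actually stays strictly below it, uniformly in $j$ and $n$. This is where the freedom to enlarge $c_1$ in \eqref{lambda rj}, the scaling $r_{j+1}-r_j\asymp r_j^{1/2}$ from Lemma \ref{1:2}, and the angular width estimates from Corollary \ref{esseScr} must be used in concert. Once a suitable threshold for $c_1$ has been fixed, everything else is routine geometric bookkeeping built out of Proposition \ref{Gamma}(ii) and the parallel-lines observation.
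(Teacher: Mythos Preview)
Your argument for \eqref{xi-in dpm} has a genuine quantitative gap that enlarging $c_1$ cannot repair. You want to bound the radial excursion of $[\xi_r^-,\xi_r^+]$ by $|t|\sin\tilde\psi_0$ and then bound $|t|$ by ``the tangential extent of a few adjacent strips, which is $O(r_j^{1/2})$''. This is circular: you do not know $|t|=O(r_j^{1/2})$ until you already know the segment stays in the three-strip annulus. Even if one grants the bound, the comparison you need is $Cr_j^{1/2}\sin\tilde\psi_0<c_0 r_j^{1/2}$, and both sides scale identically in $r_j$ (hence in $c_1$). The constants involved are governed by $\hat c$, which through \eqref{varpsi} controls $\tilde\psi_0$ and through \eqref{near-infty} controls the tangential width; $\hat c$ is fixed by the problem data and need not be small, so the inequality can simply fail.

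The paper's proof avoids this scaling trap by exploiting a scale separation you did not use: the \emph{inner} arcs $\mathrm{arc}[p_j^-,p_j^+]$ have arc-length $3\nu+2\lambda_j=O(\ln r_j)$, negligible compared with $r_{j+1}-r_j\asymp r_j^{1/2}$ (this is precisely \eqref{r-r<beta}$_2$). The key geometric step is a right-triangle observation: if $\xi\in N_{j,r_{j+1}}\cap C_{r_j}$, then $\gamma_j^m,\gamma_{j+1}^m,\xi$ form a triangle with the right angle at $\gamma_{j+1}^m$, forcing $|\gamma_j^m-\xi|\ge|\gamma_{j+1}^m-\gamma_j^m|\ge r_{j+1}-r_j$, while every point of $\mathrm{arc}[p_j^-,p_j^+]$ lies within $3\nu+2\lambda_j$ of $\gamma_j^m$. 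This rules out $N_{j,r_{j+1}}\cap\mathrm{arc}[p_j^-,p_j^+]\neq\emptyset$ (and symmetrically for $N_{j,r_j}$ and $\mathrm{arc}[p_{j+1}^-,p_{j+1}^+]$), after which \eqref{xi-in dpm} follows from half-plane arguments of the kind you sketch in (ii). For \eqref{no-inters1} the paper does not ``iterate'': it argues by contradiction that any intersection would, by (i), come from strips with index gap exactly $2$, and then the turning rule a)/b) forces the intersection point to lie on the wrong side of $C_{r_j}$. Your adjacent-strip argument matches the paper's, but your treatment of non-adjacent strips is too vague to constitute a proof.
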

\begin{proof}
See Section \ref{app}.
\end{proof}

\begin{proposition}
\label{-1}
Set $a_-=\omega^{-1}a$, $a_+=a$. Then
\begin{enumerate}
\item
\[\begin{split}
&r\in(r_j,r_{j+1}),\;\;j=2,\ldots,n-1,\\
&\Rightarrow\\
&\vert u^{R_n}-a_\pm\vert\leq c\delta^\alpha,\;\;x\in B_{\eta\lambda_{j-1}}(\xi_r^\pm)\setminus\tilde{\Sigma}.
\end{split}\]
\item There exists $\bar{\rho}>0$, $\bar{\eta}\in(0,1)$, $0<k<k^\prime$ independent of $n$ and $\tilde{r}$ such that $r_{n+1}-k^\prime\lambda_{n-1}\leq\tilde{r}\leq r_{n+1}-k\lambda_{n-1}$ and
\begin{equation}
\begin{split}
&r\in(r_n,\tilde{r})\;\;\;\Rightarrow\\\\
&\vert u^{R_n}-a_\pm\vert\leq c\delta^\alpha,\;\;x\in B_{\eta\lambda_{n-1}}(\xi_r^\pm)\setminus\tilde{\Sigma},
\end{split}
\label{A}
\end{equation}
and
\begin{equation}
\begin{split}
&\rho\in(\bar{\rho},r_{n+1}-\tilde{r})\;\;\;\Rightarrow\\\\
&\vert u^{R_n}-a_\pm\vert\leq c\delta^\alpha,\;\;x\in B_{\bar{\eta}\rho}(\xi_{r_{n+1}-\rho}^\pm)\setminus\tilde{\Sigma}.
\end{split}
\label{B}
\end{equation}
\end{enumerate}
\end{proposition}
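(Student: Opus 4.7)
Both parts of Proposition \ref{-1} reduce to the ball estimates of Lemma \ref{balls-} and Lemma \ref{ball-exists-n}, once the points $\xi_r^\pm$ are localized on specific boundary pieces $\partial\mathscr{I}_i^\pm$. This localization is already supplied by Proposition \ref{no-inters}(i), so the remaining task is to identify which piece and to select the appropriate ball radius for each case.

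For part (i), fix $r\in(r_j,r_{j+1})$ with $j\in\{2,\ldots,n-1\}$. By Proposition \ref{no-inters}(i), $\xi_r^\pm\in\partial\mathscr{I}_i^\pm$ for some $i\in\{j-1,j,j+1\}$. By construction $\partial\mathscr{I}_i^\pm$ is the union of the segment $[p_i^\pm,q_{i+1}^\pm]$, on which Lemma \ref{balls-}(ii) gives $|u^{R_n}-a_\pm|\leq c\delta^\alpha$ on $B_{\eta\lambda_i}(\xi_r^\pm)\setminus\tilde{\Sigma}$, and the arc $\mathrm{arc}[q_{i+1}^\pm,p_{i+1}^\pm]$, on which Lemma \ref{balls-}(i) gives the same on $B_{\eta\lambda_{i+1}}(\xi_r^\pm)\setminus\tilde{\Sigma}$. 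Since $\lambda_i=\tilde{c}\ln r_i$ is monotone nondecreasing in $i$, both balls contain $B_{\eta\lambda_{j-1}}(\xi_r^\pm)$, and part (i) follows.

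For part (ii) the set $\mathscr{I}_n$ has a qualitatively different shape near the outer boundary: the lateral boundary $\partial\mathscr{I}_n^\pm$ is the concatenation of the thick-region segment $[p_n^\pm,q^\pm]$, on which Lemma \ref{ball-exists-n}(ii) gives the uniform radius $\eta\lambda_n$, and the collapsing segment $[q^\pm,\hat{p}_{n+1}^\pm]$ along $\ell_{r_{n+1}}^\pm$, on which Lemma \ref{ball-exists-n}(iii) gives only the shrinking radius $\rho/2$ with $\rho=r_{n+1}-r(\cdot)$. I choose $\tilde{r}$ so that the two estimates agree in size at the transition, i.e., so that $r_{n+1}-\tilde{r}$ has order $\lambda_{n-1}$; since $\lambda_{n-1}$ and $\lambda_n$ differ by the bounded quantity $\tilde{c}\ln(r_n/r_{n-1})$, and since the bounded-slope property \eqref{varpsi1} relates the $r$-coordinate along $\gamma^m$ to that along $\xi_r^\pm$ up to a bounded factor, this can be arranged with the quantitative range $r_{n+1}-k'\lambda_{n-1}\leq\tilde{r}\leq r_{n+1}-k\lambda_{n-1}$. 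For $r\in(r_n,\tilde{r})$ the cross-section meets $\partial\mathscr{I}_n^\pm$ inside $[p_n^\pm,q^\pm]$ (possibly crossing through the preceding arc), and \eqref{A} follows from Lemma \ref{ball-exists-n}(ii). For $\rho\in(\bar{\rho},r_{n+1}-\tilde{r})$ the cross-section meets $\partial\mathscr{I}_n^\pm$ inside $[q^\pm,\hat{p}_{n+1}^\pm]$, and since $r_{n+1}-r(\xi_r^\pm)\geq c\rho$ for some $c>0$ independent of $n$ (again by \eqref{varpsi1}), Lemma \ref{ball-exists-n}(iii) yields \eqref{B} with $\bar{\eta}=c/2$.

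The main obstacle is the endgame analysis near the tip: uniformly in $n$, one must show that $\xi_{r_{n+1}-\rho}^\pm$ really lands on the segment $[q^\pm,\hat{p}_{n+1}^\pm]$, and not on the small cap $\mathrm{arc}[\hat{p}_{n+1}^-,\hat{p}_{n+1}^+]$, and that the proportionality $r_{n+1}-r(\xi_r^\pm)\geq c\rho$ holds with $c$ independent of $n$. The threshold $\bar{\rho}$ is what excludes an $O(\lambda_{n+1})$ neighborhood of $\hat{p}_{n+1}^\pm$ where the transverse width of $\mathscr{I}_n$ drops below a fixed multiple of $\lambda_{n+1}$ and the geometric picture degenerates. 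The verification is essentially geometric, built on the limit triangle $T_n$ of Figure \ref{tn} and the limit angle $\varphi=\pi/2-\psi$ computed in Remark \ref{remark}.
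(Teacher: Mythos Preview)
Your outline for part (i) is correct for $j=2,\ldots,n-2$ but not for $j=n-1$. When $j=n-1$ the index $i=j+1=n$ is allowed by Proposition \ref{no-inters}(i), and for $i=n$ the lateral boundary is \emph{not} $[p_n^\pm,q_{n+1}^\pm]\cup\mathrm{arc}[q_{n+1}^\pm,p_{n+1}^\pm]$ but rather $\partial\mathscr{I}_n^\pm=[p_n^\pm,q^\pm]\cup[q^\pm,\hat{p}_{n+1}^\pm]$, which is not covered by Lemma \ref{balls-}. On the second segment only the shrinking-radius estimate of Lemma \ref{ball-exists-n}(iii) is available, and there is no a priori reason why $r_{n+1}-r(\xi_r^\pm)$ should be at least $2\eta\lambda_{n-2}$ when $r\in(r_{n-1},r_n)$. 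The paper avoids this by proving (i) first only for $j\leq n-2$, then establishing (ii), and finally recovering the case $j=n-1$ as a consequence of (ii) together with Proposition \ref{no-inters}. You should reorganize your argument in the same way.

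For part (ii) your intuition is right but the construction of $\tilde{r}$ is too vague to yield the quantitative two-sided bound $r_{n+1}-k'\lambda_{n-1}\leq\tilde{r}\leq r_{n+1}-k\lambda_{n-1}$. The paper fixes this by a concrete geometric definition: it places a point $\tilde{\xi}^-$ on $\ell_{r_{n+1}}^-$ at radial distance exactly $4\eta\lambda_{n-1}$ from $C_{r_{n+1}}$ and then \emph{defines} $\tilde{r}$ by the requirement $\xi_{\tilde{r}}^-=\tilde{\xi}^-$. The two-sided bound then comes from an explicit limit-triangle computation (Figure \ref{tn}) after distinguishing the sign of the angle $\chi_n$ between $\tau_n$ and the ray $L_{\theta(\gamma_{n+1}^m)}$; this asymmetry (one treats the $-$ side first, the $+$ side follows since $\chi_n\geq 0$ forces $r(\xi_r^-)\geq r(\xi_r^+)$) is absent from your sketch. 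Finally, your appeal to \eqref{varpsi1} for the inequality $r_{n+1}-r(\xi_r^\pm)\geq c\rho$ is not quite the right tool: \eqref{varpsi1} controls the angle of $\tau_j$ with the ray through $\gamma_j^m$, not directly the radial spread along the normal $N_{n,r}$; the paper instead compares the two ball radii directly via the dichotomy $\rho\gtrless 4\eta\lambda_{n-1}$ and sets $\bar{\eta}=\tfrac14$.
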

\begin{proof}
See Section \ref{app}.
\end{proof}

We are now in the position to derive a sharp upper bound for the length $\vert\gamma^m\vert$ of $\gamma^m$.
For each $r\in(r_2,r_{n+1}-\bar{\rho})\setminus\{r_j\}_{j=3}^n$ let $J^*(r)$ be the one dimensional energy of the
restriction of $u^{R_n}$ to the segment $[\xi_r^-,\xi_r^+]$. From Proposition \ref{no-inters} and the fact that, for $r\in(r_j,r_{j+1})$, $[\xi_r^-,\xi_r^+]$ remains orthogonal to $[\gamma_j^m,\gamma_{j+1}^m]$, it follows via Lemma \ref{basic}
\begin{equation}
J_{B_{R_n}}(u^{R_n})\geq NJ_{\mathscr{I}}(u^{R_n})\geq N\int_0^{\vert\gamma^m\vert}J^*(r(s))ds,
\label{LBmascr}
\end{equation}
where $s\in(0,\vert\gamma^m\vert)$ is the curvilinear abscissa along $\gamma^m$ and $s\rightarrow r(s)$ the inverse of $r\rightarrow s(r)$ which exists by \eqref{Kstar}.
Set
\begin{equation}
\tilde{c}=\frac{1}{\bar{k}\eta},
\label{tildec}
\end{equation}
where $\tilde{c}$ is the constant in \eqref{lambda rj} and $\eta$ is defined in Proposition \ref{-1}. We assume
\begin{equation}
\tilde{c}\ln{r_1}=\lambda_1\geq\frac{\bar{l}}{\eta},
\label{>barl}
\end{equation}
which is equivalent to the assumption that the constant $c_1$  in \eqref{lambda rj} be sufficiently large. From  \eqref{tildec}, \eqref{>barl}, Corollary \ref{cor} and Proposition \ref{-1} we obtain
\begin{equation}
\begin{split}
&\left.\begin{array}{l}
\vert u^{R_n}(\xi_r^\pm)-a_\pm\vert\leq 2\delta e^{-\bar{k}(\eta\lambda_{j-1}-\bar{l})}\\
=2\delta e^{\bar{k}\bar{l}} e^{-\bar{k}\eta\tilde{c}\ln{r_{j-1}}}=\frac{2\delta e^{\bar{k}\bar{l}}}{r_{j-1}},
\end{array}\right.
r\in(r_j,r_{j+1}),\;\;j=2,\ldots,n-1,\\
&\;\;\vert u^{R_n}(\xi_r^\pm)-a_\pm\vert\leq\frac{2\delta e^{\bar{k}\bar{l}}}{r_{n-1}},
\;\;r\in(r_n,\tilde{r}).
\end{split}
\label{j..}
\end{equation}
Instead, in the interval $(\tilde{r},r_{n+1}-\bar{\rho})$, assuming also that $\bar{\rho}\geq\frac{\bar{l}}{\bar{\eta}}$, we obtain
\begin{equation}
\vert u^{R_n}(\xi_r^\pm)-a_\pm\vert\leq 2\delta e^{\bar{k}\bar{l}}e^{-\bar{k}\bar{\eta}(r_{n+1}-r)},\;\;r\in(\tilde{r},r_{n+1}-\bar{\rho}).
\label{near-n+1}
\end{equation}
From \eqref{j..} and Lemma \ref{lower-sigma} it follows
\begin{equation}
\begin{split}
&J^*(r)\geq\sigma-C_W\frac{4\delta^2 e^{2\bar{k}\bar{l}}}{r_{j-1}^2},\;\;r\in(r_j,r_{j+1}),\\
&J^*(r)\geq\sigma-C_W\frac{4\delta^2 e^{2\bar{k}\bar{l}}}{r_{n-1}^2},\;\;r\in(r_n,\tilde{r}),
\end{split}
\label{integ}
\end{equation}
and
\begin{equation}
J^*(r)\geq\sigma-C_W4\delta^2 e^{2\bar{k}\bar{l}}e^{-2\bar{k}\bar{\eta}(r_{n+1}-r)},\;\;r\in(\tilde{r},r_{n+1}-\bar{\rho}).
\label{integn}
\end{equation}
Recall that from Proposition \ref{Gamma} and Lemma \ref{1:2} we have $\frac{ds}{dr}\leq K$, $\frac{r_j}{r_{j-1}}\leq(1+\frac{\beta}{2})$ and $r_{j+1}-r_j\leq C^0r_j^\frac{1}{2}$. This and \eqref{integ} imply \[\begin{split}
&\frac{1}{r_{j-1}^2}\int_{s(r_j)}^{s(r_{j+1})}\frac{ds}{dr}dr\leq K\frac{r_{j+1}-r_j}{r_{j-1}^2}
\leq\frac{K(1+\frac{\beta}{2})^2C^0}{r_j^\frac{3}{2}},\\
&\frac{1}{r_{n-1}^2}\int_{s(r_j)}^{s(\tilde{r})}\frac{ds}{dr}dr\leq\frac{K(1+\frac{\beta}{2})^2C^0}{r_n^\frac{3}{2}}.
\end{split}\]
which, via \eqref{integ}, yields
\begin{equation}
\begin{split}
&\int_{s(r_j)}^{s(r_{j+1})}J^*(r(s))ds\geq\sigma(s(r_{j+1})-s(r_j))
-\frac{C^*}{r_j^\frac{3}{2}},\;\;j=2,\ldots,n-1,\\
&\int_{s(r_j)}^{s(\tilde{r})}J^*(r(s))ds\geq\sigma(s(\tilde{r})-s(r_n))-\frac{C^*}{r_n^\frac{3}{2}}.
\end{split}
\label{int-int}
\end{equation}
where we have set $C^*=4\delta^2C_WKC^0(1+\frac{\beta}{2})^2e^{2\bar{k}\bar{l}}$. Finally, in a similar way, from \eqref{integn} we get
\begin{equation}
\begin{split}
&\int_{s(\tilde{r})}^{s(r_{n+1}-\bar{\rho})}J^*(r(s))ds\geq\sigma(s(r_{n+1}-\bar{\rho})-s(\tilde{r}))-
\frac{4\delta^2C_WKe^{2\bar{k}\bar{l}}}{2\bar{k}\bar{\eta}}\\
&\geq\sigma(s(r_{n+1})-s(\tilde{r}))-K\bar{\rho}-
\frac{4\delta^2C_WKe^{2\bar{k}\bar{l}}}{2\bar{k}\bar{\eta}}.
\end{split}
\label{J*n}
\end{equation}
By adding this estimate with the estimates \eqref{int-int} from $j=1$ to $j=n$ we obtain
\begin{equation}
\begin{split}
&\int_0^{\vert\gamma^m\vert}J^*(r(s))ds\geq\int_{s(r_2)}^{s(r_{n+1}-\bar{\rho})}J^*(r(s))ds\\
&\geq\sigma(s(r_{n+1})-s(r_2))-C_1^*=\sigma\vert\gamma^m\vert-C_1^*,
\end{split}
\label{intJ*}
\end{equation}
where $C_1^*=C^*\sum_{j=1}^\infty\frac{1}{r_j^\frac{3}{2}}+K\bar{\rho}+
\frac{4\delta^2C_WKe^{2\bar{k}\bar{l}}}{2\bar{k}\bar{\eta}}$ is a constant independent of $n$. From \eqref{intJ*}, \eqref{LBmascr} and \eqref{UB} we get
\begin{equation}
\vert\gamma^m\vert\leq R_n+\frac{C_1^*+\frac{C_1}{N}}{\sigma}
\leq r_{n+1}+C_2^*,\;\;C_2^*=\vert\Sigma\vert+\frac{C_1^*+\frac{C_1}{N}}{\sigma}.
\label{LBgamma}
\end{equation}
This is the announced upper bound for the length of the interface.

\subsection{Existence of $C_N$-equivariant $N$-junctions}
We can now complete the proof of Theorem \ref{Th2}.

1. We have
\begin{equation}
\begin{split}
&\vert\gamma^m\vert\geq\vert\gamma^m(s(r))-\gamma_2^m\vert+\vert\gamma_{n+1}^m-\gamma^m(s(r))\vert,\\
&\vert\gamma^m(s(r))-\gamma_2^m\vert\geq r-r_2,\\
&\vert\gamma_{n+1}^m-\gamma^m(s(r))\vert=((r_{n+1}-r\cos{\vartheta})^2+r^2{\sin^2{\vartheta}})^\frac{1}{2}\\
&=((r_{n+1}-r)^2+4rr_{n+1}{\sin^2{\frac{\vartheta}{2}}})^\frac{1}{2},
\end{split}
\label{gamma>}
\end{equation}
where
\begin{equation}
\vartheta=\theta(\gamma^m(s(r)))-\theta(\gamma_{n+1}^m).
\label{varthetadef}
\end{equation}
 From \eqref{gamma>} and \eqref{LBgamma}, after some manipulation, we get
\[4\sin^2{\frac{\vartheta}{2}}\leq2C_3^*\frac{r_{n+1}-r}{rr_{n+1}}+\frac{{C_3^*}^2}{rr_{n+1}}
\leq\frac{C_3^*}{r}(2+\frac{{C_3^*}}{r_{n+1}}),\;\;C_3^*=C_2^*+r_2.\]
It follows $4\sin^2{\frac{\vartheta}{2}}\leq\frac{4C_3^*}{r}$ for $n$ sufficiently large. From this we conclude
\begin{equation}
\vartheta\leq\frac{C_4^*}{r^\frac{1}{2}},\;\;r\in[\hat{r},r_{n+1}],\;n\geq\bar{n},
\label{varteta-b}
\end{equation}
where $C_4^*>0$ and $\hat{r}\geq r_2$ are constants independent of $n\geq\bar{n}$, for some $\bar{n}$.
\vskip.2cm

2. The estimate \eqref{varteta-b} gives some control of the shape of $\gamma^m$, the \emph{spine} of the diffuse interface $\mathscr{I}$, and allows to show that $\mathscr{I}$ lie in a neighborhood of the ray $L_{\theta(\gamma_{n+1}^m)}$ in the sense that
\begin{equation}
\begin{split}
&\mathscr{I}\subset B_{\mathring{r}}\cup D,\\
&D=\{x(r,\theta): \vert\theta-\theta(\gamma_{n+1}^m)\vert\leq\frac{\mathring{C}}{r^\frac{1}{2}},\;r\in(\mathring{r},r_{n+1}),
\;\mathring{r}=\frac{(\mathring{C}N)^2}{\pi^2}\},
\end{split}
\label{I-geom}
\end{equation}
for some constants $\mathring{C}>0$ and $r_0\geq\hat{r}$ independent of $n\geq\bar{n}$. The condition $r\geq
\frac{(\mathring{C}N)^2}{\pi^2}$ in \eqref{I-geom} ensures that
\begin{equation}
\omega^jD\cap D=\emptyset,\;\;j=1,\ldots,N-1.
\label{DomD0}
\end{equation}
To prove \eqref{I-geom} we estimate the thickness of $\mathscr{I}$. For $r\in(r_j,r_{j+1}]$ let $x^\pm\in\mathscr{I}_j\cap C_r$. Then the definition of $\mathscr{I}_j$ in the proof of Lemma \ref{balls-} and \eqref{near-infty} imply
\begin{equation}\begin{split}
&\max_{x^\pm}\frac{d^\circ(x_-,x^+)}{r}\leq\frac{d^\circ(q_{j+1}^-,q_{j+1}^+)}{r_j}
\leq3\hat{c}\frac{r_{j+1}-r_j}{r_j}\\
&\leq\frac{3\hat{c}C^0}{r_j^\frac{1}{2}}
\leq\frac{C_5^*}{r^\frac{1}{2}},\;\;r\in(r_j,r_{j+1}],\;j=1,\ldots,n,
\end{split}
\label{thickness}
\end{equation}
where we have also used \eqref{.5} and $\frac{r}{r_j}\leq 1+\frac{\beta}{2}$ and set $C_5^*=3\hat{c}C^0(1+\frac{\beta}{2})^\frac{1}{2}$.
This and \eqref{varteta-b} imply \eqref{I-geom} with $\mathring{C}=C_4^*+C_5^*$.
\vskip.2cm

3. If $u^{R_n}$ is a minimizer of $J_{B_{R_n}}$ the map $\varrho u^{R_n}(\varrho\cdot)$ is also a minimizer for each rotation $\varrho:\R^2\rightarrow\R^2$. Therefore we can assume that $\theta(\gamma_{n+1}^m)=0$, for $n\geq\bar{n}$ and define
\[Q_n=\{x(r,\theta): \theta\in(\frac{\mathring{C}}{r^\frac{1}{2}},\frac{2\pi}{N}-\frac{\mathring{C}}{r^\frac{1}{2}}),\;r\in(\mathring{r},r_{n+1}),
\;\mathring{r}=\frac{(\mathring{C}N)^2}{\pi^2}\},\]
Since $Q_n\subset\mathscr{R}$, \eqref{ok-R} implies
 \begin{equation}
\vert u^{R_n}(x)-\omega^{j-1}a\vert\leq c\delta^\alpha,\;\;x\in\omega^{j-1}Q_n\setminus\tilde{\Sigma},\;\;j=1,\ldots,N-1.
 \label{ok-R1}
 \end{equation}
 This and Corollary \ref{cor} yield
 \begin{equation}
 \vert u^{R_n}(x)-\omega^{j-1}a\vert\leq\bar{K}e^{-\bar{k}d(x,\partial\omega^{j-1}Q_n)},\;\;x\in\omega^{j-1}Q_n,\;n\geq\bar{n},
 \label{decay-n}
 \end{equation}
 where $\bar{k}$ is as in Corollary \ref{cor} and $\bar{K}>0$ some constant independent of $n$.
 \vskip.2cm

 4. The family of minimizers $\{u^{R_n}\}_n$ is uniformly bounded in $C^{2+\alpha}(B_{R_n};\R)$, for some $\alpha\in(0,1)$. It follows the existence of a subsequence still denoted $\{u^{R_n}\}_n$ that converges in compact in the $C^2$ sense to a map $U:\R^2\rightarrow\R^2$ which is a solution of \ref{elliptic}. $U$ is $C_N$-equivariant and, since the estimate \eqref{decay-n} passes to the limit for $n\rightarrow+\infty$, satisfies \eqref{exp-infty}. The proof is complete.
\subsection{Appendix}\label{app}

\begin{proof}(of Proposition \ref{no-inters})
\vskip.2cm

1. We have
\begin{equation}
\left.\begin{array}{l}
 N_{j,r_j}\cap\mathrm{arc}[p_{j+1}^-,p_{j+1}^+]=\emptyset,\\\\
 N_{j,r_{j+1}}\cap\mathrm{arc}[p_j^-,p_j^+]=\emptyset,
\end{array}\right.
j=1,\ldots,n.
\label{no-intsc}
\end{equation}

If $N_{j,r_{j+1}}\cap C_{r_j}=\emptyset$ \eqref{no-intsc}$_2$ is trivially true. Assume instead that there is $\xi\in N_{j,r_{j+1}}\cap C_{r_j}$. Then $\gamma_j^m$, $\gamma_{j+1}^m$ and $\xi$ are the vertices of a triangle rectangle in $\gamma_{j+1}^m$ and it follows
\begin{equation}
d^\circ(\gamma_j^m,\xi)\geq\vert\gamma_j^m-\xi\vert\geq\vert\gamma_{j+1}^m-\gamma_j^m\vert\geq\vert r_{j+1}-r_j\vert.
\label{fist-step}
\end{equation}
Since
$d^\circ(\gamma_j^m,p_j^\pm)\leq d^\circ(p_j^-,p_j^+)$ and \eqref{r-r<beta} yields
 \[\vert r_{j+1}-r_j\vert>d^\circ(p_j^-,p_j^+)=3\nu+2\lambda_j,\;\;\;j=1,\ldots,n,\]
\eqref{no-intsc}$_2$ follows from \eqref{fist-step}. The same argument applies to \eqref{no-intsc}$_1$.
\vskip.2cm

2. It results
\begin{equation}
[\xi_{r_j^\pm}^+,\xi_{r_j^\pm}^-]\cap(\mathrm{arc}[p_{j-1}^-,p_{j-1}^+]\cup\mathrm{arc}[p_{j+1}^-,p_{j+1}^+]).
\label{no-int+-}
\end{equation}

Since $[\xi_{r_j^-}^+,\xi_{r_j^-}^-]\subset N_{j-1,r_j}$ and $[\xi_{r_j^+}^+,\xi_{r_j^+}^-]\subset N_{j+1,r_j}$
 \eqref{no-intsc}$_2$ implies
\begin{equation}
\begin{split}
&[\xi_{r_j^-}^+,\xi_{r_j^-}^-]\cap\mathrm{arc}[p_{j-1}^-,p_{j-1}^+]=\emptyset,\\
&[\xi_{r_j^+}^+,\xi_{r_j^+}^-]\cap\mathrm{arc}[p_{j+1}^-,p_{j+1}^+]=\emptyset.
\end{split}
\label{sec-steps}
\end{equation}
If $\tau_{j+1}=\tau_j$ we have $[\xi_{r_j^-}^+,\xi_{r_j^-}^-]=[\xi_{r_j^+}^+,\xi_{r_j^+}^-]$ and \eqref{no-int+-} follows from \eqref{sec-steps}. If $\tau_{j+1}\neq\tau_j$ both in case a) and b) it results that $[\xi_{r_j^-}^+,\xi_{r_j^-}^-]$ lies ($[\xi_{r_j^+}^+,\xi_{r_j^+}^-]$ lies) on the half plane determined by $N_{j+1,r_j}$ (by $N_{j-1,r_j}$) that does not contain $\mathrm{arc}[p_{j+1}^-,p_{j+1}^+]$ ($\mathrm{arc}[p_{j-1}^-,p_{j-1}^+]$). This and \eqref{sec-steps} imply \eqref{no-int+-}.
\vskip.2cm

3. From 2. we have that $\xi_{r_j^+}^-$ and $\xi_{r_{j+1}^-}^-$ ($\xi_{r_j^+}^+$ and $\xi_{r_{j+1}^-}^+$) are the extreme of a subarc $\mathscr{C}_j^-\subset\cup_{i=j_1}^{j+1}\partial\mathscr{I}_i^-$ (of a subarc $\mathscr{C}_j^+\subset\cup_{i=j_1}^{j+1}\partial\mathscr{I}_i^+)$. This, since, by definition $[\xi_r^+,\xi_r^-]$,
$[\xi_{r_j^+}^+,\xi_{r_j^+}^-]$ and $[\xi_{r_{j+1}^-}^+,\xi_{r_{j+1}^-}^-]$ have the same direction, implies
\[\xi_r^\pm\in\mathscr{C}_j^\pm\subset\cup_{i=j-1}^{j+1}\partial\mathscr{I}_i^\pm,\;\;r\in(r_j,r_{j+1}),\;j=2,\ldots,n-1,\]
that concludes the proof of \eqref{xi-in dpm}.
\vskip.2cm

4. We have
\begin{equation}
[\xi_r^+,\xi_r^-]\cap[\xi_{r^\prime}^+,\xi_{r^\prime}^-]=\emptyset,\;\;r,r^\prime\in (r_j,r_{j+2})\setminus\{r_{j+1}\},\;\;j=2,\ldots,n-1.
\label{loc-no-int}
\end{equation}
This is obvious if $\tau_{j+1}=\tau_j$. If $\tau_{j+1}\neq\tau_j$ and a) holds, \eqref{loc-no-int} follows from the fact that $\mathscr{C}_j^-$ and $\mathscr{C}_{j+1}^-$ have the extreme $\gamma_{j+1}^m=p_{j+1}^-=\xi_{r_{j+1}^\pm}^-$ in common. The same argument applies if b) holds.
\vskip.2cm

5. Assume that there are $r\in(r_i,r_{i+1})$, $r^\prime\in(r_j,r_{j+1})$ and $\xi$ such that
\begin{equation}
\{\xi\}=[\xi_r^+,\xi_r^-]\cap[\xi_{r^\prime}^+,\xi_{r^\prime}^-].
\label{xi-exists}
\end{equation}
Without loss of generality we can assume that $j\geq i$. From \eqref{xi-in dpm} it follows $j\leq i+2$. On the other hand 4. implies $j>i+1$ and we conclude that $j=i+2$. This and \eqref{xi-in dpm} imply
\begin{equation}
r(\xi)\in[r_{j-1},r_j].
\label{r-interv}
\end{equation}
From \eqref{xi-exists} it follows that $\tau_{j-2}\neq\tau_j$ and therefore that at least one of the following two possibilities holds:
\[\tau_j\neq\tau_{j-1},\quad \tau_{j-1}\neq\tau_{j-2}.\]
We discuss the case $(\tau_{j-1},\tau_j)$ negative. The analysis of the other possibilities is analogous. We have \[(\tau_{j-1},\tau_j)\;\;\text{negative}\;\;\Rightarrow\;\;\gamma_j^m=p_j^-=\xi_{r_j^\pm}^-.\]
This and $r^\prime>r_j$ imply $r(\xi_{r^\prime}^-)>r_j$ which, since $\xi\in[\xi_{r^\prime}^+,\xi_{r^\prime}^-]$, is in contradiction with \eqref{r-interv} and therefore with the existence of $\xi$. The proof is complete.
\end{proof}
\vskip1cm
\begin{proof}(of Proposition \ref{-1})
\vskip.2cm

1. From  \eqref{xi-in dpm} and Lemma \ref{balls-} it follows that if $j=2,\ldots,n-2$, for each $x\in\cup_{i=j-1}^{j+1}\partial\mathscr{I}_i^\pm$ and, in particular for $\xi_r^\pm$, it results
\[\vert u^{R_n}-a_\pm\vert\leq c\delta^\alpha,\;\;\text{on}\;\:B_{\eta\lambda_{j-1}}\setminus\tilde{\Sigma}.\]
This concludes the proof of (i) for $j=2,\ldots,n-2$.
\vskip.2cm

2. For $r$ near $r_{n+1}$, one or both the extreme of $[\xi_r^-,\xi_r^+]$ may lie on $\mathrm{arc}[\hat{p}_{n+1}^-,\hat{p}_{n+1}^+]$. Since $\vert\hat{p}_{n+1}^+-\hat{p}_{n+1}^-\vert\leq 3\nu$ A sufficient condition to exclude this is $r\leq r_{n+1}-\bar{\rho}$,  $\bar{\rho}=3\nu$.
\vskip.2cm

3. Assume that $\rho=\rho(x)=r_{n+1}-r(x)$ satisfies
\begin{equation}
\rho\geq 4\eta\lambda_{n-1}.
\label{trade}
\end{equation}
Then we have $B_{\eta\lambda_{n-1}}(x)\subset B_{\frac{\rho}{2}}(x)$ and from Lemma \ref{ball-exists-n} it follows
\begin{equation}
\begin{split}
&x\in\cup_{j=n-1,n}\partial\mathscr{I}_j^\pm\;\;\text{and}\;\;\rho\geq 4\eta\lambda_{n-1}\\
&\Rightarrow\;\;\;\vert u^{R_n}-a_\pm\vert\leq c\delta^\alpha,\;\;\text{on}\;\;B_{\eta\lambda_{n-1}}(x)\setminus\tilde{\Sigma}.
\end{split}
\label{trade-}
\end{equation}
\noindent
Fix $\bar{\eta}=\frac{1}{4}$. Then
\[\rho< 4\eta\lambda_{n-1},\]
implies $B_{\bar{\eta}\rho}(x)\subset B_{\eta\lambda_{n-1}}(x)$. From this, $B_{\bar{\eta}\rho}(x)\subset B_{\frac{\rho}{2}}(x)$  and Lemma \ref{ball-exists-n} we have
\begin{equation}
\begin{split}
&x\in\cup_{j=n-1,n}\partial\mathscr{I}_j^\pm\;\;\text{and}\;\;\rho<4\eta\lambda_{n-1}\\
&\Rightarrow\;\;
\vert u^{R_n}-a_\pm\vert\leq c\delta^\alpha,\;\;\text{on}\;\;B_{\bar{\eta}\rho}(x)\setminus\tilde{\Sigma}.
\end{split}
\label{trade+}
\end{equation}
\vskip.2cm

4. Let $w_n$ the direction vector of the ray $L_{\theta(\gamma_{n+1}^m)}$ through $\gamma_{n+1}^m$ and let $\chi_n$ the angle between $w_n$ and $\tau_n$ positive if $(w_n,\tau_n)$ is positive. We assume $\chi_n\geq 0$. The same argument with obvious modifications applies to the case $\chi_n<0$. Recall the definition of $\ell_{r_{n+1}}^\pm$ in Lemma \ref{ball-exists-n} and define $\tilde{\xi}^-$ by setting

\[\begin{split}
&\tilde{\xi}^-\in\ell_{n+1}^-,\\
&r(\tilde{\xi}^-)=r_{n+1}-4\eta\lambda_{n-1}.
\end{split}\]

Note that $\tilde{\xi}^-$ satisfies \eqref{trade} with the equality sign. We define $\tilde{r}$ as the value of $r$ such that $\tilde{\xi}^-=\xi_{\tilde{r}}^-$, that is we let $\tilde{r}$ be determined by the condition that $\gamma^m(\tilde{r})$ coincides with the intersection of $\{x=\gamma_{n+1}^m+t\tau_n,\;t\in\R\}$ with $\{x=\tilde{\xi}^-+t\upsilon_n,\;t\in\R\}$. With this choice of $\tilde{r}$ we have $N_{n,\tilde{r}}=\{x=\tilde{\xi}^-+t\upsilon_n,\;t\in\R\}$ and $[\tilde{\xi}^-,\tilde{\xi}^+]=[\xi_{\tilde{r}}^-,\xi_{\tilde{r}}^+]$ where $\tilde{\xi}^+$ is the other extreme of the connected component of $N_{n,\tilde{r}}\cap\mathscr{I}$ that contains $\gamma^m(\tilde{r})$. Since $\tilde{\xi}^-$ satisfies \eqref{trade} with the equality sign, $\xi_r^-$ satisfies \eqref{trade-} or \eqref{trade+} depending on wether  $r\leq\tilde{r}$ or $r>\tilde{r}$. This and the fact that $\chi_n\geq 0$ implies $r(\xi_r^-)\geq r(\xi_r^+)$ show the existence of $\tilde{r}$ such that \eqref{A} and \eqref{B} hold.

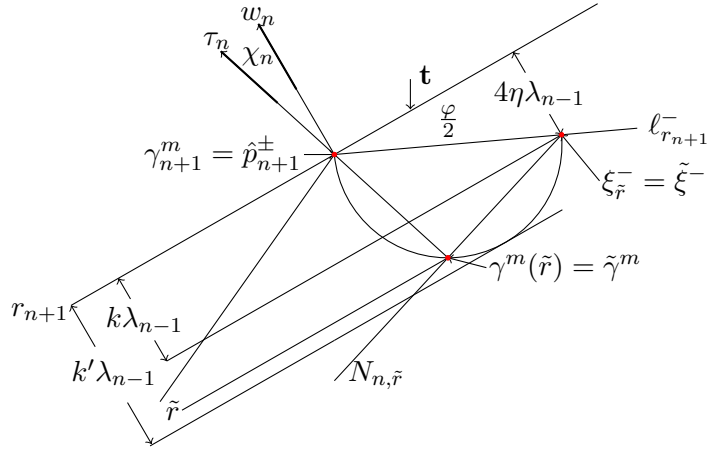
\begin{figure}
  \begin{center}
\begin{tikzpicture}[scale=1]

\path[fill=red] (0,0) circle [radius=.035];
\draw (0,0)--(3.464,2);
\draw (0,0)--(-3.464,-2);

\draw (0,0)--(3.985,.349);
\draw (0,0)--(-2.294,-3.277);

\path[fill=red] (2.989,.2615) circle [radius=.035];
\draw [] (0,0) arc [radius=1.5, start angle=-175, end angle=5];

\draw (0,0)--(-1,1.732);
\draw (1.4945,-1.369)--(0,-2.9995);
\draw [->](-.5,.866)--(-1,1.732);
\draw [thick](-.5,.866)--(-1,1.732);
\path[fill=red] (1.4945,-1.369) circle [radius=.035];
\draw (-1.4945,1.369)--(1.4945,-1.369);
\draw[->](-.74725,.6845)-- (-1.4945,1.369);
\draw[thick](-.74725,.6845)-- (-1.4945,1.369);
\draw (2.989,.2615)--(1.4945,-1.369);
\draw[<-] (2.989,.2615)--(2.8305,.536);
\draw[<-] (2.355,1.359)--(2.5135,1.0846);
\draw (0,-2.4643)--(3,-.7323);
\draw (0,-2.4643)--(-2.4248,-3.8634);

\draw[<-] (2.989,.2615)--(3.489,-.349);

\draw (2.989,.2615)--(-2.207,-2.7385);
\draw [<-](-2.207,-2.7385)--(-2.3655,-2.4641);
\draw [<-](-2.841,-1.641)--(-2.6825,-1.9154);
\draw [<-](-3.464,-2)--(-3.147,-2.5488);
\draw [->](1,1)--(1,.6);

\draw [<-](-2.405,-3.8375)--(-2.722,-3.2887);
\draw (1.4945,-1.369)--(-2.005,-3.39);
\draw [->](-.4,0)--(0,0);
\draw [<-](1.4945,-1.369)--(2,-1.5);
\node[above] at (1.5,.1) {$\frac{\varphi}{2}$};
\node [] at (-1,1.3) {$\chi_n$};
\node [right] at (3.385,-.349) {$\xi_{\tilde{r}}^-=\tilde{\xi}^-$};
\node [right] at (1.9,-1.5) {$\gamma^m(\tilde{r})=\tilde{\gamma}^m$};
\node [left] at (-.3,0) {$\gamma_{n+1}^m=\hat{p}_{n+1}^\pm$};
\node [left] at (-1.9,-3.39) {$\tilde{r}$};
\node [above] at (1.2,.8) {$\mathbf{t}$};
\node [left] at (-3.364,-2.1) {$r_{n+1}$};
\node [] at (-1,1.832) {$w_n$};
\node [] at (-1.5445,1.509) {$\tau_n$};
\node [] at (2.7,.8) {$4\eta\lambda_{n-1}$};
\node [] at (-2.5,-2.2) {$k\lambda_{n-1}$};
\node [] at (-2.9,-2.9) {$k^\prime\lambda_{n-1}$};
\node [right] at (4,.35) {$\ell_{r_{n+1}}^-$};
\node [below] at (0.55,-2.6) {$N_{n,\tilde{r}}$};
\path[fill=red] (0,0) circle [radius=.035];
\path[fill=red] (1.4945,-1.369) circle [radius=.035];
\path[fill=red] (2.989,.2615) circle [radius=.035];
\end{tikzpicture}
\end{center}
\caption{$k$ and $k^\prime$ and $\tilde{r}$.}
\label{tilde-r}
\end{figure}

To complete the proof we recall that $\lambda_{n-1}\rightarrow+\infty$ as $n\rightarrow+\infty$ while $\vert\gamma_{n+1}^m-\hat{p}_{n+1}^-\vert\leq\frac{3\nu}{2}$ with $\nu$ is independent of $n$. It follows that, by accepting an error of $\mathrm{O}(\frac{1}{n})$ we can identify $\gamma_{n+1}^m$ with $\hat{p}_{n+1}^-$ and the circumference $C_r$ with a straight line parallel to the tangent $\mathbf{t}$ to $C_{r_{n+1}}$ at $\gamma_{n+1}^m$. In the same order of approximation $\gamma^m(\tilde{r})$ can be identified with the intersection $\tilde{\gamma}^m\neq\gamma_{n+1}^m$ of $\{x=\gamma_{n+1}^m+t\tau_n,\;t\in\R\}$ with the circumference of diameter $[\gamma_{n+1}^m,\tilde{\xi}^-]$ and $N_{n,\tilde{r}}\cap\mathscr{I}$ with the line through $\tilde{\gamma}^m$ and $\tilde{\xi}^-$ (see Figure \ref{tilde-r}). Under these identifications that are equivalent to pass to the limit for $n\rightarrow+\infty$ we see that $\tilde{r}$ has an upper bound $\approx \tilde{r}\leq r_{n+1}-k\lambda_{n-1}$ ($k\approx 4\eta$) when $\chi_n=0$ ($r(\tilde{\xi}^+)=r(\tilde{\xi}^-)$) and a lower bound that corresponds to the situation where the line parallel to $\mathbf{t}$ through $\tilde{\gamma}^m$ is tangent to the circumference with diameter $[\gamma_{n+1}^m,\tilde{\xi}^-]$. If $\frac{\varphi}{2}$ is the limit value of the angle between $[\gamma_{n+1}^m,\tilde{\xi}^-]$ and $\mathbf{t}$ we find, see Figure \ref{tilde-r}, that $\tilde{r}\geq r_{n+1}-k^\prime\lambda_{n-1}$ with $k^\prime\approx 2(1+\frac{2}{\sin{\frac{\varphi}{2}}})$. This concludes the proof of (ii).
\vskip.2cm

5. It remain to show that (i) is valid also for $j=n-1$. This follows from (ii) and Proposition \ref{no-inters}.
The proof is complete.
\end{proof}


\begin{thebibliography}{99}
\bibitem{acm}
F.~Alessio, A.~Calamai and P.~Montecchiari.
\newblock Saddle-type solutions for a class of semilinear elliptic equations.
\newblock{\em Adv.\ Diff.\ Eq.} {\bf 12} (2007), pp.~361-380.


\bibitem{afs}
N.D.~Alikakos, G.~Fusco and P.~Smyrnelis
\newblock {\em Elliptic systems of phase transition type.}
\newblock  Progress in Nonlinear Differential Equations and their applications n.91, Birkhuser,\, (2018).

\bibitem{A} P.~Antonopoulos and P.~Smyrnelis.  \newblock On minimizers
  of the Hamiltonian system $u''=\nabla W(u)$, and on the existence of
  heteroclinic, homoclinic and periodic connections.
  \newblock{\em Indiana\ Univ.\ Math.\ J.} {\bf 65} n.5 (2016), pp.~1503--1524.

\bibitem{bfs}
P.~Bates, G.~Fusco and P.~Smyrnelis.
\newblock Entire  solutions with six-fold junction to elliptic gradient systems with triangle symmetry.
\newblock {\em Advanced\ Nonlinear\ Studies} {\bf 13} No.~1 (2013), pp.~1--11.


\bibitem{bfs1}
P.~Bates, G.~Fusco and P.~Smyrnelis.
\newblock Multiphase Solutions to the Vector Allen–Cahn
Equation: Crystalline and Other Complex
Symmetric Structures.
\newblock {\em Arch.\ Rat.\ Mech.\ Anal.} {\bf 225} (2017), pp.~685--715.

\bibitem{bgs}
L.~Bronsard, C.~Gui, and M.~Schatzman.
\newblock A three-layered minimizer in $\R^2$ for a var\-i\-ational problem with a symmetric three-well potential.
\newblock {\em Comm.\ Pure.\ Appl.\ Math.} {\bf 49} No.~7 (1996), pp.~677--715.

\bibitem{CT}
X.~Cabr\'e  and J.~Terra.
\newblock Saddle-shaped solutions of bistable diffusion equation in all of $\R^{2m}$.
\newblock {\em J.\ Eur.\ Math.\ Soc.} {\bf 11} No.~ (2007), pp.~819--893.

\bibitem{DFP}
H.~Dang, P.C.~Fife  and L.A.~Peletier.
\newblock Saddle solutions of the bistable diffusion equation.
\newblock {\em Z.\ Angew.\ Math.\ Phys.} {\bf 43} No.~ (1992), pp.~984--998.

\bibitem{dkw}
del Pino, M., Kowalczyk, M., Wei, J.: On De Giorgi's conjecture in dimension $N \geq 9$.
Ann. Math. \textbf{174}, 1485--1569 (2011)

\bibitem{f0}
G.~Fusco.
\newblock On some elementary properties of vector minimizers of the Allen-Cahn energy.
\newblock {\em Comm.\ Pure\ Appl.\ Anal.} {\bf 13 } No.~3 (2014), pp.~1045--1060.

\bibitem{f}
G.~Fusco.
\newblock Periodic motions for multi-wells potentials and \\
layers dynamic for the vector Allen-Cahn equation.
\newblock {\em preprint} {\bf } No.~ (2020), pp.~.

\bibitem{fgn0}
G.~Fusco, G.F.~Gronchi, and M.~Novaga.
\newblock On the existence of connecting orbits for crytical values of the energy.
\newblock {\em J.\ Diff.\ Eqs.} {\bf 263} No.~12 (2017), pp.~8848--8872.

\bibitem{fgn}
G.~Fusco, G.F.~Gronchi, and M.~Novaga.
\newblock Existence of periodic orbits near heteroclinic connections.
\newblock {\em Minimax\ Theory\ Appl.} {\bf 4} No.~1 (2019), pp.~113--149.

\bibitem{gs}
Gui, C., Schatzman, M.: Symmetric quadruple phase transitions.
Ind. Univ. Math. J. \textbf{57} No.~2, 781--836 (2008)

\bibitem{sav}
Savin, O.: Regularity of flat level sets in phase transitions.
Ann. of Math. \textbf{169}, 41--78 (2009)

\bibitem{scha}
Schatzman, M.:
Asymmetric heteroclinic double layers.
Control Optim. Calc. Var. \textbf{8} (A tribute to J. L. Lions), 965--1005 (electronic) (2002)

\bibitem{SO}
N.~Soave.
\newblock Saddle-shaped positive solutions for elliptic systems with bistable nonlinearity.
\newblock {\em Math.\ Engin.} {\bf 2} No.~3 (2020), pp.~423--437.


\bibitem{SW}
J.~Smoller, and A.~Wasserman.
\newblock Global bifurcation of steady-state solutions.
\newblock {\em J.\ Diff.\ Equat.} {\bf 39} No.~ (1981), pp.~269--290.

\bibitem{Wei}
Wei, J.: Geometrization program of semilinear elliptic equations.
AMS/IP \textbf{51}, 831--857 (2012)
 \end{thebibliography}
\end{document}